\pgfplotsset{compat=newest}
\colorlet{color1}{blue}
\colorlet{color2}{red!50!black}
\definecolor{ivory}{RGB}{218,215,203}
\definecolor{cuhkp}{RGB}{98,56,105} 	
\definecolor{cuhkpl}{RGB}{152,24,147} 	
\definecolor{cuhkb}{RGB}{219,160,1} 	
\definecolor{cuhkbd}{RGB}{178,129,0} 	
\definecolor{cuhkr}{RGB}{88,35,155}  	
\definecolor{blackp}{RGB}{0,0,0} 
\definecolor{redp}{RGB}{255,0,0}
\definecolor{orangep}{RGB}{255,128,0}
\definecolor{brownp}{RGB}{128,77,0}
\definecolor{yellowp}{RGB}{255,230,0}
\definecolor{greenp}{RGB}{128,230,0}
\definecolor{bluep}{RGB}{0,128,255}
\definecolor{purplep}{RGB}{152,24,147}
\definecolor{pinkp}{RGB}{230,0,128}    
\theoremstyle{plain}
\newtheorem{thm}{Theorem}
\newtheorem{lemma}{Lemma}[section]
\newtheorem{lem}[thm]{Lemma}
\newtheorem{proposition}[thm]{Proposition}
\newtheorem{remark}[thm]{Remark}
\newtheorem{assumption}{Assumption}
\theoremstyle{plain}
\newcommand{\rmn}[1]{\textup{\textrm{#1}}}
\newcommand{\SGD}{\mathsf{SGD}}
\newcommand{\DGD}{\mathsf{DGD}}
\newcommand{\RR}{\mathsf{RR}}
\newcommand{\Fed}{\mathsf{FedAvg}}
\newcommand{\one}{\mathbf{1}}
\newcommand{\cO}{\mathcal{O}}
\newcommand{\cB}{\mathcal{B}}
\newcommand{\R}{\mathbb{R}}
\newcommand{\sL}{{\sf L}}
\newcommand{\sG}{{\sf G}}
\newcommand{\sT}{{\sf T}}
\newcommand{\sC}{{\sf C}}
\newcommand{\sH}{{\sf H}}
\newcommand{\bx}{\mathbf{x}}
\newcommand{\N}{\mathbb{N}}
\newcommand{\Rn}{\mathbb{R}^d}
\newcommand{\Prob}{\mathbb{P}}
\newcommand{\Exp}{\mathbb{E}}
\newcommand{\dist}{\mathrm{dist}}
\newcommand{\crit}{\mathrm{crit}}
\newcommand{\acc}{\mathcal{A}}
\newcommand{\cS}{\mathcal{S}}
\newcommand{\half}{\frac{1}{2}}
\newcommand{\iprod}[2]{\left\langle #1, #2 \right\rangle}
\newcommand{\be}{\begin{equation}}
\newcommand{\ee}{\end{equation}}
\newcommand{\spa}{\mathrm{span}}
\newcommand{\nul}{\mathrm{null}}
\newcommand{\sct}[1]{\boldsymbol{#1}}
\newcommand{\se}{\sct{e}}
\newcommand{\sg}{\sct{g}}
\newcommand{\scs}{\sct{s}}
\newcommand{\sx}{\sct{x}}
\newcommand{\sK}{\sct{K}}
\title{A KL-based Analysis Framework with Applications to Non-Descent Optimization Methods}
\author{
Junwen Qiu\thanks{Contributed equally to this work.} \\
	School of Data Science (SDS) \\
	The Chinese University of Hong Kong, Shenzhen \\ 
	Shenzhen, Guangdong, China \\
	\texttt{junwenqiu@link.cuhk.edu.cn} \\
        \And
	  Bohao Ma\footnotemark[1] \\ 
	School of Data Science (SDS) \\
	The Chinese University of Hong Kong, Shenzhen \\
        Shenzhen Research Institute of Big Data (SRIBD) \\
	Shenzhen, Guangdong, China \\
	\texttt{bohaoma@link.cuhk.edu.cn} \\
        \And
	Xiao Li \\ 
	School of Data Science (SDS) \\
	The Chinese University of Hong Kong, Shenzhen \\
	Shenzhen, Guangdong, China \\
	\texttt{lixiao@cuhk.edu.cn} \\
	\And
	Andre Milzarek\\
	School of Data Science (SDS) \\
	The Chinese University of Hong Kong, Shenzhen \\ 
	Shenzhen, Guangdong, China \\
	\texttt{andremilzarek@cuhk.edu.cn} \\
}
\begin{document}
\maketitle

\begin{abstract}
We propose a novel analysis framework for non-descent-type optimization methodologies in nonconvex scenarios based on the Kurdyka-{\L}ojasiewicz property. Our framework allows covering a broad class of algorithms, including those commonly employed in stochastic and distributed optimization. Specifically, it enables the analysis of first-order methods that lack a sufficient descent property and do not require access to full (deterministic) gradient information. We leverage this framework to establish, for the first time, iterate convergence and the corresponding rates for the decentralized gradient method and federated averaging under mild assumptions. Furthermore, based on the new analysis techniques, we show the convergence of the random reshuffling and stochastic gradient descent method without necessitating typical a priori bounded iterates assumptions. 
\end{abstract}

\section{Introduction}
The analysis of algorithms under the Kurdyka-{\L}ojasiewicz (KL) inequality \cite{lojasiewicz1965ensembles,kur98} has become an active and fruitful area of research in nonconvex optimization for several compelling reasons. Firstly, the KL inequality is a local geometric property that is satisfied for a vast and ubiquitous class of functions \cite{AttBolRedSou10,AttBolSva13,li2018calculus,liu2019quadratic}. Notably, in \cite{absil2005convergence,AttBolRedSou10,AttBolSva13,BolSabTeb14}, Absil, Attouch and Bolte et al. have made significant contributions in developing a comprehensive KL analysis framework for optimization algorithms. This framework serves as a valuable blueprint, simplifying the study and verification of the asymptotic behavior and iterate convergence of descent-type methods for nonsmooth nonconvex minimization problems. Successful instances of applying the analysis framework to obtain convergence encompass splitting methods \cite{li2015global}, Douglas-Rachford splitting \cite{li2016douglas}, proximal gradient method \cite{AttBolSva13} and its inertial variant \cite{OchCheBroPoc14}, among many others. Moreover, when the {\L}ojasiewicz exponent of the objective function is known, local rates of convergence can be quantified and explicitly derived, see, e.g., \cite{AttBol09,AttBolRedSou10,BolSabTeb14,fragarpey15,li2018calculus}.

The large-scale nature of modern machine learning problems has made stochastic and distributed optimization techniques crucial tools in manifold tasks, including the training of neural networks. Nonetheless, the asymptotic convergence analysis of these algorithms in the nonconvex setting is relatively limited compared to deterministic methods. This limitation is primarily due to the challenges posed by the lack of a sufficient descent property. Additionally, stochastic and distributed algorithms do not have access to the full gradient information, necessitating the use of diminishing step sizes to mitigate the stochastic errors. This significantly differs from the common step size conditions in the standard frameworks \cite{absil2005convergence,AttBolSva13,fragarpey15,OchCheBroPoc14}, where the step sizes are typically bounded away from zero.

Given these observations in stochastic and distributed optimization, we introduce a novel KL-based analysis framework outlined (in an informal way) below. 
\begin{mdframed}[
  linecolor=black!100,
  linewidth=.2mm,          
  backgroundcolor=bluep!1,
  roundcorner=0pt,       
  innerleftmargin=10pt,   
  innerrightmargin=15pt,  
  innertopmargin=10pt,    
  innerbottommargin=10pt  
]

\underline{\textbf{Informal Framework}}. 
Let $K\in\N$, $\{x^k\}_k \subset \Rn$, $\{\beta_k\}_k \subset \R$, and $\{I_k\}_k \subseteq \N$ be given. We assume: 
\begin{enumerate}[label=\textup{\textrm{(A.\arabic*)}},topsep=0pt, leftmargin = 28pt,itemsep=0ex,partopsep=0ex]
	\item  \label{A1-informal} \textbf{Approximate descent.} There exist $a_1>0$ and $\{p_k\}\subseteq \R_+$ such that for any integer $k \geq K$,
	\[
		f({x}^{I_{k+1}}) +a_1 \beta_k \|\nabla{f}({x}^{I_k})\|^2 \leq f({x}^{I_k})  +  p_k.
\]	
	\item \label{A2-informal} \textbf{Gradient-bounded update.} There exist $a_2>0$ and $\{q_k\}\subseteq \R_+$ such that for any integer $k \geq K$,
\[
		\max_{I_k < i \leq I_{k+1}}\|x^{i}-x^{I_k}\| \leq a_2 \beta_k\|\nabla f(x^{I_k})\| + q_k. \vspace{-1ex}
\]
\end{enumerate}
\end{mdframed}
The sequence $\{\beta_k\}_k$ is closely related to the choice of step sizes and can converge to zero. Condition \ref{A1-informal} captures an approximate descent-type property, where $\{p_k\}_k$ is associated with the potential \emph{non-descent} terms. This allows covering algorithms that might not strictly adhere to a descent condition at each iteration.  Condition \ref{A2-informal} reflects the iterative structure of many first-order methods in the smooth case, where updates are gradient-based but potentially incorporate \emph{approximation errors} $\{q_k\}_k$. Approximation errors and non-descent behavior frequently arise in stochastic and distributed settings. Notably, the non-descent terms $\{p_k\}_k$ and errors $\{q_k\}_k$ often exhibit a dependence on higher orders of the step size, as observed in algorithms for finite-sum optimization, see \cref{sec:finite sum}. Furthermore, the sequence $\{I_k\}_k$ enables us to depart from the traditional consecutive-iterate analyses. This is particularly useful when characterizing the almost sure behavior for stochastic approximation methods, see \cref{sec:sgd} and \cite[Section 3]{qiu2024convergence}. 

In summary, we can leverage this framework to analyze algorithms that possess the following characteristics:
\begin{itemize}
	\item[--] Do not necessarily exhibit \emph{sufficient descent} property.
	\item[--] Could be working with \emph{inexact gradient} information. 
\end{itemize}
We now present an informal overview of our main convergence theorem.

\begin{mdframed}[
  linecolor=black!100,
  linewidth=.2mm,          
  backgroundcolor=bluep!1,
  roundcorner=0pt,       
  innerleftmargin=10pt,   
  innerrightmargin=15pt,  
  innertopmargin=10pt,    
  innerbottommargin=10pt  
]
\underline{\textbf{\cref{thm:main} (Informal)}}. \vspace{-1ex}
\begin{itemize}[leftmargin=5ex]
    \item [--] Let $f:\Rn\to\R$ be a Lipschitz smooth function and let the KL property hold at $\crit(f)$.
    \item [--]  Suppose $\{x^k\}_k$ satisfies \ref{A1-informal}--\ref{A2-informal} and let certain summability conditons hold for $\{p_k\}_k,\{q_k\}_k$. \vspace{-1ex}
\end{itemize}
Then, 
either $\{x^k\}_k$ converges to some stationary point $x^*\in\crit(f)$ or it holds that $\|x^k\| \to \infty$. \vspace{-1ex}
\end{mdframed}
This theorem shows iterate convergence for $\{x^k\}_k$, i.e., $\{x^k\}_k$ converges to some stationary point of $f$ when $\|x^k\|\not\rightarrow\infty$. Note that $\|x^k\| \to \infty$ happens if and only if $\{x^k\}_k$ does not have any accumulation point. This can be excluded if $\{x^k\}_k$ is bounded \cite{AttBol09,BolSabTeb14,li2023convergence,qian2023convergence} or the function $f$ is coercive \cite{daneshmand2020second}. Some works also assume the existence of at least an accumulation point for $\{x^k\}_k$, e.g., \cite{AttBolSva13,OchCheBroPoc14,li2015global,li2016douglas}. Beyond the main convergence results, we also quantify the local convergence rates for $\{x^k\}_k$ in \cref{thm:convergence rate} when $\{p_k\}_k$ and $\{q_k\}_k$ admit a specific form.

\subsection{Related Work}

Attouch, Bolte and Svaiter \cite{AttBolSva13} have developed a comprehensive KL inequality-based analysis framework (outlined below) that unifies and simplifies the convergence analysis of descent-type methods applied to nonconvex problems. 

\begin{mdframed}[
  linecolor=black!100,
  linewidth=.2mm,          
  backgroundcolor=bluep!1,
  roundcorner=0pt,       
  innerleftmargin=10pt,   
  innerrightmargin=15pt,  
  innertopmargin=10pt,    
  innerbottommargin=10pt  
]
Let $f:\Rn\to\R$ satisfies the KL property at $\crit(f)$. If $\{x^k\}_k$ has at least one accumulation point and satisfies \ref{H1}--\ref{H2} below, then $\{x^k\}_k$ converges to some $x^*\in\crit(f)$.
\begin{enumerate}[label=\textup{\textrm{(H.\arabic*)}},topsep=0pt,itemsep=0ex,partopsep=0ex]
\item \label{H1} \textbf{Sufficient descent.} There exists a constant $a>0$, such that for all $k\in\N$,
\begin{equation}
	\label{eq:H1}
	f({x}^{k+1}) + a \|x^{k+1}-x^k\|^2 \leq f({x}^{k}).
\end{equation}
\item \label{H2} \textbf{Relative error.} There exist constants $b>0$, such that for all $k\in\N$,
\begin{equation}
	\label{eq:H2}
\dist(0,\partial f(x^{k+1})) \leq b 	\|x^{k+1}-x^k\|. \vspace{-1ex}
\end{equation}
\end{enumerate}
\end{mdframed}
Here, $\partial f(x)$ denotes the limiting subdifferential \cite[Definition 1]{AttBol09} of $f$ at $x\in\Rn$ and $\dist(0,S):=\inf_{s\in S}\|s\|$. \ref{H1} is intended to model a descent property as it involves a measure of the quality of the descent. \ref{H2} reflects relative inexact optimality conditions.

As mentioned before, the framework \cite{AttBolSva13} has been successfully utilized to establish iterate convergence for a vast variety of algorithms. 
Nonetheless, \ref{H1} typically does not hold when analyzing inertial methods due to additional momentum terms. To fit inertial methods, Ochs et al. have studied adjustments of the conditions \ref{H1} and \ref{H2} in \cite[Section 3.2]{OchCheBroPoc14} and iterate convergence of inertial proximal methods was shown \cite[Theorem 4.9]{OchCheBroPoc14}.
Frankel, Garrigos and Peypouquet \cite{fragarpey15} noted that the condition \ref{H2} is restrictive when algorithmic updates contain small computational errors. To address this, a relaxation of condition \ref{H2} was proposed in \cite{fragarpey15} that allows incorporating additive errors:
\begin{enumerate}[label=\textup{\textrm{(H.2$^\prime$)}},topsep=0pt,itemsep=0ex,partopsep=0ex]
\item \label{H2p} \textbf{Relative error.} There exist constants $b>0$, such that for all $k \geq 1$,
\begin{equation}
\dist(0,\partial f(x^{k+1})) \leq b \|x^{k+1}-x^k\| + \varepsilon_k,\quad \text{where}\quad {\textstyle \sum_{k=1}^\infty}\; \varepsilon_k < \infty. 
\end{equation}
\end{enumerate}
This modification is non-trivial because it helps to cover the inexact proximal gradient method and the generalized Newton's method, cf. \cite[Theorem 6 and Proposition 4]{fragarpey15}. Moreover, explicit convergence rates in terms of the parameters (usually the KL exponent) are also provided in \cite[Theorem 4 and 5]{fragarpey15}. 

Still, the necessity for sufficient descent, \ref{H1}, in the existing frameworks \cite{AttBolSva13,OchCheBroPoc14,fragarpey15} has hindered the convergence analyses of algorithms without such property. Typical examples for such algorithms include methods with nonmonotone line-search schemes \cite{grippo1986nonmonotone,birgin2000nonmonotone,grippo2002nonmonotone,qian2023convergence}, to name a few. Taking the specific line search conditions (cf. \cite[Section 3]{grippo1986nonmonotone}) into account, Qian and Pan \cite{qian2023convergence} developed a new condition \ref{H1p} (shown below) and established the iterates convergence.
\begin{enumerate}[label=\textup{\textrm{(H.1$^\prime$)}},topsep=0pt,itemsep=0ex,partopsep=0ex]
\item \label{H1p} \textbf{Nonmonotone descent.} There exists a constant $a>0$, such that for all $k\geq 1$,
\begin{equation}
	\label{eq:H1p}
	f({x}^{k+1}) + a \|x^{k+1}-x^k\|^2 \leq {\max}_{j=[k-m]_{+},\ldots,k}\; f({x}^{j}).
\end{equation}
\end{enumerate}
The integer $m$ is a preset parameter in the line-search scheme and $[k-m]_+:=\max\{0,k-m\}$. The adjusted framework enables the convergence analysis of nonmonotone line-search methods, cf. \cite[Proposition 2.5 and Theorem 2.9]{qian2023convergence}. 

Incomplete gradient information during iterations is another factor leading to the lack of a sufficient descent property. Common approaches with this feature include stochastic methods such as stochastic gradient descent ($\SGD$), random reshuffling ($\RR$) \cite{li2023convergence}, and distributed algorithms such as decentralized gradient descent ($\DGD$) \cite{nedic2009distributed} and federated averaging ($\Fed$) \cite{mcmahan2017communication}. Unfortunately, the existing frameworks fail to provide convergence guarantees for these methods. The reasons behind such limitations are twofold: (i) the absence of sufficient descent properties and (ii) the complex dynamics arising from stochastic (inexact) gradients and diminishing step sizes. It is important to note that the step sizes in the existing frameworks \cite{AttBolSva13,OchCheBroPoc14,fragarpey15,qian2023convergence} are required to be bounded away from zero. This excludes the use of diminishing step sizes, which can help mitigate the errors from stochastic (inexact) gradients. To our knowledge, existing KL-based frameworks are not applicable to approaches that only possess approximate descent guarantees and utilize diminishing step sizes.

\subsection{Contribution and Organization}
We summarize our contributions below.

In \cref{subsec:new_framework}, we introduce an abstract framework that allows us to prove convergence of non-descent first-order methods in the smooth nonconvex setting. Notably, compared to existing frameworks, our proposed analysis framework enables the convergence analysis of stochastic and distributed methods. In \cref{subsec:framework+conv}, we provide the formal KL-based analysis framework and an abstract convergence result for sequences satisfying the framework (\cref{thm:main}). In \cref{subsec:special case}, we introduce a specialized framework for studying the convergence rates of non-descent methods with polynomial step sizes (\cref{thm:convergence rate}).
 
In \cref{sec:sgd} and \ref{sec:finite sum}, our proposed framework is applied to analyze various non-descent stochastic or distributed algorithms. First, we recover the convergence results of $\SGD$ and $\RR$ without requiring the typical a priori boundedness assumptions in \cref{sec:sgd} and \cref{sec:rr}, respectively. In \cref{sec:dgd} and \cref{sec:fed}, we conduct in-depth analyses for $\DGD$ and $\Fed$. To the best of our knowledge, the convergence results for $\DGD$ and $\Fed$ are new in the nonconvex setting.

Beyond convergence results, we further quantify the local convergence rates of $\DGD$, $\RR$, and $\Fed$ while employing polynomial step sizes. For $\RR$, our analysis extends a prior work \cite{li2023convergence} by providing the convergence rates for both function values and the norm of the gradients. Moreover, the convergence rates for $\DGD$ and $\Fed$ seem to be new in the context of nonconvex optimization.

\section{Framework and Convergence Results}\label{subsec:new_framework}
\subsection{Basic Assumptions}
Throughout this paper, we make the following assumptions on the objective function $f$.
 \begin{assumption}
	\label[assumption]{as:func-1}
	The function $f$ is ${\sf L}$-smooth and is bounded from below by some $\bar f\in\R$.
\end{assumption}
This assumption is quite common in smooth optimization, see, e.g., \cite{AttBol09,OchCheBroPoc14,shi2015extra,yuan2016convergence,nesterov2018lectures,li2023convergence}. It also provides a useful bound \cite[Eq. (2.5)]{li2023convergence} for $\|\nabla f(x)\|$, i.e., 
\begin{equation}
	\label{eq:L-smooth}
	\|\nabla f(x)\|^2 \leq 2\sL  (f(x) - \bar f),\quad \forall~x\in\Rn.
\end{equation}
\begin{assumption}
	\label[assumption]{as:kl} 
The function $f$ satisfies the following KL inequality at every $\bar x\in \crit(f)$: There are $\eta \in (0,\infty]$ and a neighborhood $U(\bar x)$ of $\bar x$ such that
		\begin{equation}
			\label{eq:kl-ineq}	     	
			\|\nabla{f}(x)\|\geq \sC |f(x)-f(\bar x)|^\theta \quad \forall~x \in U(\bar x) \cap \{x \in \Rn: 0 < |f(x) - f(\bar x)| < \eta\},
		\end{equation}
		
		for some $\sC>0$ and $\theta\in[\half,1)$.		
\end{assumption}
This assumption applies to a wide range of functions commonly encountered in practical optimization scenarios. Specifically, proper closed semi-algebraic functions satisfy the KL inequality \eqref{eq:kl-ineq} at every point \cite{AttBolRedSou10}. Examples of functions and problems satisfying this assumption comprise least-squares problems \cite{bolte2017error}, logistic regression \cite{li2018calculus}, quadratic optimization problems \cite{liu2019quadratic}, and polynomial functions \cite{DAcuntoKurdyka2005}. These cases highlight the broad applicability of the KL inequality in various areas of optimization.

\subsection{The Proposed Framework and Convergence Results}
\label{subsec:framework+conv}
We now formally present our KL-based analysis framework.
\begin{mdframed}[
  linecolor=black!100,
  linewidth=.2mm,          
  backgroundcolor=bluep!1,
  roundcorner=0pt,       
  innerleftmargin=10pt,   
  innerrightmargin=15pt,  
  innertopmargin=10pt,    
  innerbottommargin=10pt  
]
\underline{\textbf{A KL-based Analysis Framework}}. Let $K \in \N$, $\{x^k\}_k \subset \Rn$, $\{\beta_k\}_k \subset \R$ and $\{I_k\}_k \subseteq \N$ be given.
\vspace{2mm}
\begin{enumerate}[label=\textup{\textrm{(A.\arabic*)}},topsep=0pt, leftmargin = 28pt,itemsep=0ex,partopsep=0ex]
	\item \label{C1} \textbf{Approximate descent.} There exist $a_1>0$ and $\{p_k\}\subseteq \R_+$ such that for any integer $k \geq K$,
	\[
		f({x}^{I_{k+1}}) +a_1 \beta_k \|\nabla{f}({x}^{I_k})\|^2 \leq f({x}^{I_k})  +  p_k.
\]
	\item \label{C2} \textbf{Gradient-bounded update.}  There exist $a_2>0$ and $\{q_k\}\subseteq \R_+$ such that for any integer $k \geq K$,
\[
		\max_{I_k < i \leq I_{k+1}}\|x^{i}-x^{I_k}\| \leq a_2 \beta_k\|\nabla f(x^{I_k})\| + q_k.
\]
	
	\item \label{C3} \textbf{Parameter requirements (I).} For some constant  $\bar \beta>0$,
	\[
	0<\beta_k \leq \bar \beta,\quad \sum_{k=1}^{\infty}\, \beta_k = \infty,\quad \sum_{k=1}^{\infty}\,p_k <\infty,\quad \text{and}\quad \sum_{k=1}^{\infty}\, \beta_k^{-1}q_k^2 <\infty .
	\]
	
	\item \label{C4} \textbf{Parameter requirements (II).} For some constants $\mu\in (0,1)$ and $\bar \beta>0$,
	\[
		0<\beta_k \leq \bar \beta,\quad \sum_{k=1}^{\infty}\, \beta_k = \infty,\quad \sum_{k=1}^{\infty}\,\beta_k \left[{\sum}_{t=k}^{\infty}\,p_t\right]^\mu<\infty,\quad \text{and}\quad \sum_{k=1}^{\infty}\,\beta_k \left[{\sum}_{t=k}^{\infty} \,\beta_t^{-1}q_t^2\right]^\mu <\infty .
\]
\end{enumerate}
\end{mdframed}
We commented already on the algorithmic conditions \ref{C1}--\ref{C2} in the introduction. The parameter conditions \ref{C3}--\ref{C4} impose constraints on the algorithm's parameters, typically relating to the choice of step sizes. In stochastic and distributed settings, the sequences $\{p_k\}_k$ and $\{q_k\}_k$ often emerge as higher-order terms of the step sizes (see, \cref{sec:finite sum}). \ref{C3}--\ref{C4} then effectively translate into step size requirements, as demonstrated in the simplified framework in \cref{subsec:special case}. 

We now present the main convergence theorem, which provides pivotal  understanding of the asymptotic behavior of $\{x^k\}_k$. The proof can be found in \cref{proof:main}.

\begin{thm}
    \label{thm:main}
    Let \cref{as:func-1} hold and suppose that the iterates $\{x^k\}_k$ satisfy \ref{C1}--\ref{C3}.
    Then, the following statements hold:
    \begin{enumerate}[label=\textup{(\alph*)},topsep=0pt,itemsep=0ex,partopsep=0ex]
    \item\label{weak-conv} It holds that $\lim_{k\to\infty}\|\nabla f(x^k)\|=0$ and the sequence $\{f(x^k)\}_k$ converges to some $f^*\in\R$.     
    \item\label{strong-conv} If, in addition, \cref{as:kl} holds and \ref{C4} is satisfied, then $\{x^k\}_k$ either converges to some stationary point of $f$ or $\lim_{k \to \infty} \|x^k\| = \infty$.
    \end{enumerate}
\end{thm}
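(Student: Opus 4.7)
For part (a), I would telescope the approximate descent (A.1) from $k = K$ onwards: using $f(x^{I_{N+1}}) \geq \bar f$ and $\sum_k p_k < \infty$ from (A.3), this produces the square-summability
\[
	{\sum}_{k}\, \beta_k \|\nabla f(x^{I_k})\|^2 \;<\; \infty,
\]
and, via a standard Robbins--Siegmund-type quasi-monotone convergence lemma, $\{f(x^{I_k})\}_k$ converges to some $f^* \in \R$. To pass from the subsequence $\{f(x^{I_k})\}_k$ to the full sequence, the $\sL$-smoothness descent inequality together with (A.2) bounds $|f(x^i) - f(x^{I_k})|$ for $I_k < i \leq I_{k+1}$ by a combination of $\beta_k\|\nabla f(x^{I_k})\|^2$, $q_k\|\nabla f(x^{I_k})\|$ and $q_k^2$, all of which tend to zero thanks to (A.3) (noting $\sum_k q_k^2\leq\bar\beta\sum_k\beta_k^{-1}q_k^2<\infty$, so in particular $q_k\to 0$).

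For the gradient, $\sum_k\beta_k=\infty$ combined with the above square-summability gives $\liminf_k\|\nabla f(x^{I_k})\|=0$. To upgrade this to $\lim_k\|\nabla f(x^{I_k})\|=0$, I would use the classical contradiction argument: if $\limsup_k\|\nabla f(x^{I_k})\|>0$, one isolates blocks of indices where $\|\nabla f(x^{I_k})\|$ crosses between two fixed levels $\varepsilon$ and $2\varepsilon$; the $\sL$-Lipschitzness of $\nabla f$ together with (A.2) forces a lower bound on the distance travelled inside each block, and summing over these blocks contradicts the square-summability. A final use of $\sL$-smoothness with (A.2) transfers $\|\nabla f(x^{I_k})\|\to 0$ to $\|\nabla f(x^k)\|\to 0$.

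For part (b), assume $\|x^k\|\not\to\infty$ so that $\liminf_k\|x^k\|<\infty$ and $\{x^k\}_k$ possesses at least one accumulation point $x^*$; by part (a) and continuity, $x^*\in\crit(f)$ and $f(x^*)=f^*$. Localizing around $x^*$ via \cref{as:kl}, my plan is to run a telescoping argument on a merit function that cushions the non-descent and error terms, such as
\[
	H_k \;=\; (f(x^{I_k}) - f^*)_{+} + \alpha {\sum}_{t\geq k}\,p_t + \gamma {\sum}_{t\geq k}\,\beta_t^{-1}q_t^2,
\]
for suitable $\alpha,\gamma>0$, together with a concave desingularizing function $\varphi(s)=s^{1-\theta}$. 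Combining (A.1) with the KL bound $\|\nabla f(x^{I_k})\|\geq \sC|f(x^{I_k})-f^*|^\theta$ and the concavity inequality $\varphi(H_k)-\varphi(H_{k+1})\geq \varphi'(H_k)(H_k-H_{k+1})$ should produce an estimate roughly of the form $\varphi(H_k)-\varphi(H_{k+1})\gtrsim \beta_k\|\nabla f(x^{I_k})\|$, up to residuals rendered summable by (A.4) with the exponent $\mu$ matched to $1-\theta$. Telescoping then gives $\sum_k\beta_k\|\nabla f(x^{I_k})\|<\infty$, which by (A.2) upgrades to $\sum_k\|x^{I_{k+1}}-x^{I_k}\|<\infty$; a further application of (A.2) yields $\sum_k\|x^{k+1}-x^k\|<\infty$, so $\{x^k\}_k$ is Cauchy and must converge to $x^*$.

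The main obstacle is the construction and closure of this merit inequality. Unlike \cite{AttBolSva13,fragarpey15}, the approximate descent (A.1) does not guarantee genuine monotonicity of $f(x^{I_k})-f^*$, so the residual alone cannot serve as a KL potential; the tail corrections $\sum_{t\geq k}p_t$ and $\sum_{t\geq k}\beta_t^{-1}q_t^2$ must be added to restore monotonicity of $H_k$, while the KL inequality must still be harnessed to control $\varphi'(H_k)^{-1}$ by $\|\nabla f(x^{I_k})\|$. Balancing these two competing requirements when $\beta_k$ may decay to zero and the gradient information is inexact, while simultaneously confining the iterates to the KL neighborhood of $x^*$ via an inductive trapping argument, is the technical heart of the proof and is precisely what forces the particular $\mu$-th power structure in (A.4).
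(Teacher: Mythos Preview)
Your outline is correct and tracks the paper's proof closely: part~(a) is exactly the telescoping-plus-crossing-levels argument the authors use, and your part~(b) merit function $H_k$ is precisely the paper's quantity $f(x^{I_k})-f^*+v_k$ (they do not need the positive part since this quantity is automatically nonnegative and monotone once the tails are added).

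Two small technical points are worth flagging. First, the paper does \emph{not} separate the argument into ``first get $\sum_k\beta_k\|\nabla f(x^{I_k})\|<\infty$, then apply \ref{C2}''. Instead it squares \ref{C2} and feeds $\beta_k^{-1}d_k^2$ back into the approximate-descent inequality, so that the concavity step yields directly
\[
d_k \;\lesssim\; \Delta_k-\Delta_{k+1}\;+\;\beta_k v_k^{\vartheta},\qquad d_k:=\max_{I_k<i\le I_{k+1}}\|x^i-x^{I_k}\|.
\]
Your two-step route works too, but it requires the auxiliary fact $\sum_k q_k<\infty$; this does follow from \ref{C4} (a dyadic level-set argument on $w_k:=\sum_{t\ge k}\beta_t^{-1}q_t^2$ gives it), but it is not immediate, and the paper's detour avoids it. Second, your claim $\sum_k\|x^{k+1}-x^k\|<\infty$ for the \emph{full} sequence can fail when $I_{k+1}-I_k$ is unbounded (as in the $\SGD$ application); the paper only establishes $\sum_k d_k<\infty$ and derives the Cauchy property from that, which is all that is needed. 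Finally, ``$\mu$ matched to $1-\theta$'' should read ``matched to $\theta$'': the residual after desingularization is $\beta_k v_k^{\vartheta}$ with $\vartheta\ge\theta$, and one chooses $\vartheta\ge\max\{\theta,\mu\}$ so that \ref{C4} renders it summable.
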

\begin{remark}
Let $\acc(\{x^k\}_k)$ denote the set of accumulation points of $\{x^k\}_k$, i.e.,
    \begin{equation}\label{accumulation-point}
\acc(\{x^k\}_k):=\{x\in\Rn : {\liminf}_{k\to\infty}\|x^k-x\|=0\}.
\end{equation}
    \begin{itemize}[leftmargin = 20pt]
    \item If $\acc(\{x^k\}_k)$ is non-empty, then \cref{thm:main} {(a)} implies that $\acc(\{x^k\}_k) \subseteq \crit(f)$, i.e., every accumulation point of $\{x^k\}_k$ is a stationary point of $f$. Furthermore, this guarantees $\liminf_{k\to\infty} \|x^k\|<\infty$, ruling out the situation $\lim_{k \to \infty} \|x^k\| = \infty$. Combining this observation with \cref{thm:main} {(b)}, we conclude that $\{x^k\}_k$ converges to a stationary point $x^*\in\crit(f)$.
    \item If the function $f$ is coercive, \cref{thm:main} (b) readily implies that  $\{x^k\}_k$ converges to some $x^*\in\crit(f)$. Let us explain how to show this. Based on \ref{C1} and $\sum_{k=1}^\infty p_k <\infty$, we have 
        \[
        f(x^{I_{k+1}}) \leq f(x^{I_{k}}) + p_k \quad \Longrightarrow \quad f(x^{I_{k}}) \leq f(x^{I_{1}}) + {\sum}_{i=1}^\infty \; p_i \quad \text{for all $k\geq 1$}. 
        \]
        Hence, by coercivity, we conclude that $\{x^{I_k}\}_k$ is bounded. This guarantees that $\{x^{I_k}\}_k$ has at least one convergent subsequence, and thus, $\acc(\{x^k\}_k)$ is non-empty. Based on former discussion and \cref{thm:main} (b), $\{x^k\}_k$ converges to some $x^*\in\crit(f)$.
    \end{itemize}
\end{remark}
\subsection{A Special Case and Convergence Rates}\label{subsec:special case}
This subsection presents a unified framework for deriving convergence rates of iterative methods employing polynomial step sizes. Specifically, we consider a special case of the conditions \ref{C1}--\ref{C3} by setting
\[K=1, \quad I_k = k,\quad \beta_k = \alpha_k,\quad p_k=b_2 \alpha_k^p,\quad q_k=c_2 \alpha_k^q, \quad \text{where} \quad p,q>1 \; \text{and} \; \alpha_k=\alpha/k^\gamma,\,\alpha>0,\,\gamma\in(0,1]. \]
Then, our proposed framework simplifies to 
\begin{mdframed}[
  linecolor=black!100,
  linewidth=.2mm,          
  backgroundcolor=bluep!1,
  roundcorner=0pt,       
  innerleftmargin=10pt,   
  innerrightmargin=15pt,  
  innertopmargin=10pt,    
  innerbottommargin=10pt  
]
\begin{enumerate}[label=\textup{\textrm{(R.\arabic*)}},topsep=0pt, leftmargin = 28pt,itemsep=0ex,partopsep=0ex]
	\item \label{R1} \textbf{Approximate descent.} There exist $b,\tilde b>0$ and $p > 1$ such that for all $k \geq 1$, 
	\[
		f({x}^{k+1}) +b \cdot \alpha_k \|\nabla{f}({x}^{k})\|^2 \leq f({x}^{k})  + \tilde b \cdot  \alpha_k^p .
\]
	
	\item \label{R2} \textbf{Gradient-bounded update.} There exist $c>0$ and $q>1$ such that for all $k \geq 1$, 
\[
	\|x^{k}-x^{k+1}\| \leq c \cdot \alpha_k\|\nabla f(x^{k})\| + c \cdot \alpha_k^q.
\]
	\item \label{R3} \textbf{Parameter requirements.} $\alpha_k = \alpha/k^\gamma$ with $\alpha>0$, $\gamma \in (\tfrac{1}{p},1]$ and $p+1 \leq 2q$.
\end{enumerate}
\end{mdframed}
For non-descent optimization methods, such as random reshuffling and distributed  methods (cf., \cref{sec:finite sum}), it is frequently observed that the parameters $p$ and $q$ satisfy $p+1 \leq 2q$, as stated in condition \ref{R3}. To enhance clarity, we derive convergence rates under this specific condition. Note that our results can be easily extended to more general cases. Prior to deriving the convergence rates, we introduce the uniformized KL property.
\begin{lem}[Uniformized KL property]\label{lem:uniform-kl}
    Suppose \cref{as:func-1} and \ref{as:kl} hold and let $\acc(\{x^k\}_k)$ be bounded. Then, we have $f(\bar x) = \lim_{k\to\infty} f(x^k)= f^*$ for all $\bar x \in \acc(\{x^k\}_k)$. In addition, there are $\varepsilon >0$ and $\eta >0$ such that for all $x\in V_{\varepsilon,\eta}:=\{x\in\Rn:\dist(x,\acc(\{x^k\}_k))<\varepsilon\} \cap \{x\in\Rn:0<|f(x) - f^*|<\eta\}$, it holds that
    \[
        \|\nabla f(x)\| \geq \sC|f(x) - f^*|^\theta \quad \text{for some $\sC>0$ and $\theta\in[\tfrac12,1)$}.
    \]
\end{lem}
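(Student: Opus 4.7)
The plan is to establish the constancy claim via Theorem~\ref{thm:main}(a), then combine the pointwise KL inequality at each accumulation point with a compactness-plus-finite-cover argument to obtain a uniform version, and finally unify the exponents and constants by restricting $\eta$.

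For the constancy part, I would invoke \cref{thm:main}(a), which furnishes $\|\nabla f(x^k)\|\to 0$ and $f(x^k)\to f^*$ for some $f^*\in\R$. For any $\bar x\in\acc(\{x^k\}_k)$ there is a subsequence $x^{k_j}\to \bar x$, so continuity of $f$ and $\nabla f$ (available thanks to \cref{as:func-1}) yields $f(\bar x)=f^*$ and $\nabla f(\bar x)=0$. Hence every accumulation point lies in $\crit(f)$, which is exactly where \cref{as:kl} supplies a local KL inequality. This proves the first assertion and sets the stage for the uniform estimate.

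Next, since $\acc(\{x^k\}_k)$ is always closed (being a $\liminf$-defined limit set) and is bounded by hypothesis, the set $\Omega:=\acc(\{x^k\}_k)$ is compact. For every $\bar x\in\Omega\subseteq\crit(f)$, \cref{as:kl} delivers an open neighborhood $U(\bar x)$, a threshold $\eta(\bar x)>0$, and constants $\sC(\bar x)>0$, $\theta(\bar x)\in[\tfrac12,1)$ such that the pointwise KL inequality holds on $U(\bar x)\cap\{0<|f(\cdot)-f^*|<\eta(\bar x)\}$. Compactness allows me to extract a finite subcover $\{U(\bar x_i)\}_{i=1}^N$ of $\Omega$, and because $\Omega$ is compact inside the open set $\bigcup_i U(\bar x_i)$, there exists $\varepsilon>0$ with $\{x\in\Rn:\dist(x,\Omega)<\varepsilon\}\subseteq\bigcup_{i=1}^N U(\bar x_i)$.

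To conclude, I would set $\theta:=\max_i\theta(\bar x_i)\in[\tfrac12,1)$, $\sC:=\min_i\sC(\bar x_i)>0$, and $\eta:=\min\{1,\min_i\eta(\bar x_i)\}>0$. For arbitrary $x\in V_{\varepsilon,\eta}$, I would pick an index $i$ with $x\in U(\bar x_i)$, invoke the pointwise KL inequality at $\bar x_i$ to get $\|\nabla f(x)\|\geq \sC(\bar x_i)|f(x)-f^*|^{\theta(\bar x_i)}$, and then use $|f(x)-f^*|<\eta\leq 1$ together with $\theta\geq\theta(\bar x_i)$ to replace the exponent, obtaining $\|\nabla f(x)\|\geq \sC|f(x)-f^*|^\theta$. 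The main technical delicacy I anticipate is precisely this unification of the KL exponents across the finite cover; the trick of enforcing $\eta\leq 1$ so that enlarging $\theta$ only weakens the right-hand side circumvents it cleanly, and I do not expect further obstacles.
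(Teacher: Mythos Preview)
Your proposal is correct and follows the same overall strategy as the paper: use \cref{thm:main}(a) to obtain $f(x^k)\to f^*$ and $\acc(\{x^k\}_k)\subseteq\crit(f)$, then establish a uniform KL inequality over the compact set of accumulation points. The only real difference is that the paper delegates the second step to \cite[Lemma~6]{BolSabTeb14}, whereas you carry out the compactness-and-finite-cover argument explicitly, including the trick of enforcing $\eta\leq 1$ to unify the exponents. Your route is more self-contained and makes transparent how the uniform constants $(\sC,\theta,\varepsilon,\eta)$ arise; the paper's route is shorter but relies on the reader knowing the cited lemma.

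One minor point you glossed over: to invoke \cref{thm:main}(a) you need \ref{C1}--\ref{C3}, which are not part of the lemma's stated hypotheses. The paper handles this by noting, in its proof, that the contextual conditions \ref{R1}--\ref{R3} imply \ref{C1}--\ref{C3} with the obvious identifications ($I_k=k$, $\beta_k=\alpha_k$, $p_k=\tilde b\alpha_k^p$, $q_k=c\alpha_k^q$). You should add one line making this explicit so that the appeal to \cref{thm:main}(a) is justified.
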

\begin{proof}
    Notice that \ref{C1}--\ref{C3} hold with $I_k = k$, $a_1 = b$, $a_2 = c$, $\beta_k = \alpha_k$, $p_k = \tilde b \alpha_k^p$, and $q_k = c \alpha_k^q$. Then, by \cref{thm:main} (a), we conclude that every accumulation point is a stationary point and $\{f(x^k)\}_k$ converges to some $f^*\in\R$. Hence, it holds that $\acc(\{x^k\}_k) \subseteq \crit(f)$ and $f(\bar x) = f^*$ for all $\bar x\in\acc(\{x^k\}_k)$ by the continuity of $f$. According to \cite[Lemma 6]{BolSabTeb14} and \cref{as:kl}, we establish the uniformized KL property \cref{lem:uniform-kl}.
\end{proof}
\Cref{lem:uniform-kl} plays a crucial role in deriving convergence rates when the sequence of iterates $\{x^k\}_k$ does not necessarily converge to a single stationary point. In such cases, as outlined in \cref{thm:convergence rate} (b), we assume that the set of accumulation points $\{x^k\}_k$ is bounded. This bounded iterates assumption can be relaxed if we impose stricter conditions on the step size selection (i.e., $\gamma > \frac{2}{p+1}$).

We now present our main results concerning the convergence rates of $\{x^k\}_k$ under the framework \ref{R1}--\ref{R3}. The proof of these results is provided in \cref{app:proof-rates}. 
\begin{thm}[Convergence rates]\label{thm:convergence rate} Let \cref{as:func-1} and \cref{as:kl} hold and suppose that the sequence $\{x^k\}_k$ satisfies the conditions \ref{R1}--\ref{R3}. The following statements are valid.
\begin{enumerate}[label=\textup{(\alph*)},topsep=1ex,itemsep=0ex,partopsep=0ex, leftmargin = 25pt]
\item If $\acc(\{x^k\}_k)$ is non-empty and $\gamma > \frac{2}{p+1}$, then $x^k \to x^*\in\crit(f)$ and $f(x^k) \to f(x^*) = f^*$ and we have
\[ |f(x^k)-f^*| = \cO(k^{-\psi(\theta,\gamma)}), \quad \|\nabla f(x^k)\|^2 =  \cO(k^{-\psi(\theta,\gamma)}), \quad \|x^k - x^*\| = \cO(k^{-\varphi(\theta,\gamma)}), \quad \gamma \in (\tfrac{2}{p+1},1),  \]
where $\theta$ denotes the KL-exponent of $f$ at $x^*$ and
\[ \psi(\theta,\gamma) = \min\{p\gamma-1, \tfrac{1-\gamma}{2\theta-1}\}, \quad \varphi(\theta,\gamma)=\min\{\tfrac{(p+1)\gamma}{2}-1,\tfrac{(1-\gamma)(1-\theta)}{2\theta-1}\}. \]
Moreover, if $\theta = \frac12$, $\gamma = 1$, $\alpha > \frac{2(p-1)}{b\sC^2}$, it holds that $|f(x^k)-f^*| = \mathcal O(k^{-(p-1)})$, $\|\nabla f(x^k)\|^2= \mathcal O(k^{-(p-1)})$, and $\|x^k - x^*\| = \cO(k^{-\frac{p-1}{2}})$.
\item If $\{x^k\}_k$ is bounded, then the limit $f(x^k) \to f^*$ and the rates $|f(x^k)-f^*| = \cO(k^{-\psi(\theta,\gamma)})$ and $\|\nabla f(x^k)\|^2 =  \cO(k^{-\psi(\theta,\gamma)})$ continue to hold in the case $\gamma \in (\frac1p,\frac{2}{p+1}]$ with $\theta$ being the uniformized KL-exponent over the set $\mathcal A(\{x^k\}_k)$ (cf. \cref{lem:uniform-kl}). 
\end{enumerate}
\end{thm}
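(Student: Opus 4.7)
The plan is to reduce the analysis to a scalar recursion on the function-value gap $r_k := f(x^k) - f^*$ driven by \ref{R1} and the KL inequality. First I would verify that \ref{R1}--\ref{R3} imply \ref{C1}--\ref{C4} under the natural correspondence $I_k = k$, $\beta_k = \alpha_k$, $p_k = \tilde b\alpha_k^p$, $q_k = c\alpha_k^q$: the summability conditions reduce to $p\gamma>1$ and $(2q-1)\gamma>1$, the latter implied by $2q-1\geq p$ from \ref{R3}, and \ref{C4} holds by taking $\mu$ close to $1$. Invoking \cref{thm:main}(a) then yields $f(x^k)\to f^*$ and $\|\nabla f(x^k)\|\to 0$; in part (a), non-emptiness of $\acc(\{x^k\}_k)$ combined with \cref{thm:main}(b) delivers $x^k\to x^*\in\crit(f)$, and in part (b) the boundedness of the iterates activates the uniformized KL of \cref{lem:uniform-kl}.

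Next, combining \ref{R1} with $\|\nabla f(x^k)\|\geq \sC r_k^\theta$ (valid for all large $k$ after a standard sign-and-neighborhood discussion) gives the master recursion
\[
r_{k+1} \;\leq\; r_k - b\sC^2\alpha_k r_k^{2\theta} + \tilde b\alpha_k^p.
\]
The core of the proof is a discrete sequence lemma establishing $r_k=\cO(k^{-\psi(\theta,\gamma)})$, which I would prove by induction with the ansatz $r_k\leq M k^{-\psi}$. The exponent $\psi$ is chosen to balance the two error mechanisms: the contraction $b\sC^2\alpha_k r_k^{2\theta}$ alone permits $\psi\leq (1-\gamma)/(2\theta-1)$, while dominating the noise $\tilde b\alpha_k^p$ demands $\psi\leq p\gamma-1$. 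In the special case $\theta=\tfrac12$, $\gamma=1$, the recursion collapses to a sharp linear form $r_{k+1}\leq (1 - b\sC^2\alpha/k)r_k + \tilde b(\alpha/k)^p$, yielding $r_k=\cO(k^{-(p-1)})$ whenever $b\sC^2\alpha>p-1$, matching the hypothesis $\alpha>2(p-1)/(b\sC^2)$.

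The gradient rate $\|\nabla f(x^k)\|^2=\cO(k^{-\psi})$ follows by rearranging \ref{R1} into $b\alpha_k\|\nabla f(x^k)\|^2\leq r_k - r_{k+1} + \tilde b\alpha_k^p$ and localizing the descent bound $\|\nabla f(x)\|^2\leq 2\sL(f(x)-\bar f)$ from \eqref{eq:L-smooth} near $x^*$. For the iterate rate in part (a), I would telescope \ref{R2} in two pieces: the noise contribution $\sum_{j\geq k} c\alpha_j^q$ evaluates to $\cO(k^{-((p+1)\gamma/2 -1)})$ under $\gamma>2/(p+1)$ and $q\geq (p+1)/2$, producing the first branch of $\varphi$; the gradient contribution $\sum_{j\geq k}c\alpha_j\|\nabla f(x^j)\|$ is handled by the classical KL trick using the concave function $\phi(s)=s^{1-\theta}/(1-\theta)$, whose telescoping yields a cumulative bound of order $\phi(r_k)=\cO(k^{-(1-\gamma)(1-\theta)/(2\theta-1)})$, i.e., the second branch.

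The main obstacle I anticipate is the induction step for the master recursion when $\theta\in(\tfrac12,1)$: the constant $M$ must simultaneously absorb the noise $\tilde b\alpha_k^p$ and the expansion remainder $k^{-\psi}-(k+1)^{-\psi}\leq \psi k^{-\psi-1}$, and $\psi(\theta,\gamma)$ is precisely the exponent for which all cross-terms in the inequality $r_{k+1}\leq M(k+1)^{-\psi}$ balance. A second delicate point is obtaining the iterate rate in part (a) without a priori boundedness: the strengthened step-size condition $\gamma>2/(p+1)$ is exactly what renders the noise-induced displacements $\sum_k c\alpha_k^q$ absolutely summable, thereby upgrading the qualitative convergence from \cref{thm:main}(b) into a quantitative rate, whereas part (b) relies on the uniformized KL over the bounded accumulation set to treat the weaker range $\gamma\in(1/p, 2/(p+1)]$.
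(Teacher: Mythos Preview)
Your overall plan is close to the paper's proof: verify \ref{C1}--\ref{C4}, invoke \cref{thm:main}, derive a Chung-type scalar recursion from \ref{R1} and the KL inequality, and then telescope \ref{R2} through the concave desingularization $\phi(s)=s^{1-\theta}$ for the iterate rate. Two points deserve attention. First, a minor one: the raw gap $r_k=f(x^k)-f^*$ need not be nonnegative (there is no monotone descent), so the KL step $\|\nabla f(x^k)\|\geq \sC r_k^\theta$ is not directly available. The paper handles this by working with the shifted quantity $\Gamma_k:=f(x^k)-f^*+u_k$ with $u_k=\tilde b\sum_{i\geq k}\alpha_i^p$, which is nonnegative and monotone by \ref{R1}; your ``standard sign discussion'' should be exactly this shift.

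The substantive gap is your argument for the gradient rate. Rearranging \ref{R1} gives only $b\alpha_k\|\nabla f(x^k)\|^2\leq \Gamma_k-\Gamma_{k+1}$, and dividing by $\alpha_k$ yields $\|\nabla f(x^k)\|^2=\cO(k^{-\psi+\gamma})$, not $\cO(k^{-\psi})$; the paper explicitly flags this as suboptimal. Your proposed fix---``localizing'' \eqref{eq:L-smooth} to $\|\nabla f(x)\|^2\leq 2\sL(f(x)-f^*)$ near $x^*$---does not hold in general: that inequality is exactly the PL condition with exponent $\tfrac12$, and it requires $x^*$ to be a local minimizer, which is nowhere assumed (the KL property supplies a \emph{lower} bound on $\|\nabla f\|$, not an upper bound). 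The paper obtains the sharp rate by an entirely different mechanism: it sums \ref{R1} over a time window $[k,k+\varpi(k)]$ chosen so that $\sum_{j=0}^{\varpi(k)-1}\alpha_{k+j}\asymp 1$, and then uses a Gronwall argument (\cref{lem:in-main-proof}, based on \ref{R2}) to show $\|\nabla f(x^{k+j})\|^2\geq \tfrac18\|\nabla f(x^k)\|^2-\tfrac14\alpha_k^{2q-2}$ uniformly over the window. This converts the window sum $\sum_j\alpha_{k+j}\|\nabla f(x^{k+j})\|^2\leq \Gamma_k/b$ into a bound $\|\nabla f(x^k)\|^2\lesssim \Gamma_k+\alpha_k^{2q-2}=\cO(k^{-\psi})$. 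You will need this (or an equivalent) device; the single-step rearrangement is not enough.
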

\begin{remark}[Optimal choice of $\gamma$]\label{rem:convergence rate}
    The results in \cref{thm:convergence rate} provide guidance for selecting the optimal step size parameter $\gamma$, when information about the local geometry of the function is available  (i.e., when the KL-exponent is known). 
    In particular, if the exact KL-exponent $\theta\in[\frac12,1)$ is known, then, based on \cref{thm:convergence rate}, the optimal step size parameter $\gamma^*$ to maximize the convergence rate is given by
        \[
        \gamma^* = \frac{2\theta}{(2\theta-1)p+1} \quad \implies \quad \psi(\theta,\gamma^*) = \frac{p-1}{(2\theta-1)p+1} \quad \text{and} \quad \varphi(\theta,\gamma^*) = \frac{(p-1)(1-\theta)}{(2\theta-1)p+1}.
        \]
        Noting $\gamma^* > \frac{2}{p+1}$, this choice of $\gamma$ guarantees convergence of $\{x^k\}_k$ to some $x^*\in\crit(f)$. 
\end{remark}

\section{Application Area I : Stochastic Approximation Methods}\label{sec:sgd}
In areas of stochastic approximation and online learning, the objective function $f$ corresponds to a data-driven predictive learning task in the form of
\be \label{eq:prob-exp} f(x):=\Exp_{\xi\sim\Xi}[F(x,\xi)]=\int_{\Xi} F(x,\xi)\,\mathrm{d}\mu(\xi)
\ee
where the underlying probability space $(\Xi,{\cal H},\mu)$ is usually unknown.

In this section, we establish convergence results for $\SGD$ applying the framework presented in \cref{subsec:new_framework}. $\SGD$ generates a stochastic process $\{\sx^k\}_k$ via the update rule
\begin{equation}
	\label{algo:sgd}
	\sx^{k+1} = \sx^k - \alpha_k \sg^k, \quad \forall\, k \geq 1,
\end{equation}
where $\sx^1 \equiv x^1$ is a deterministic initial point. Throughout this section, we will use bold letters to describe random variables ${\sx} : \Omega \to \Rn$, while lowercase letters are typically reserved for realizations of a random variable, $x = {\sx}(\omega)$, or deterministic parameters. In addition, we assume that there is a sufficiently rich probability space $(\Omega,\mathcal F,\Prob)$ that allows modeling and studying the iterates generated by $\SGD$ in a unified way. Hence, each stochastic approximation of $\nabla f(x^k)$ is understood as a realization of a random vector  $\sg^k: \Omega \to \Rn$. 

We further consider the natural filtration ${\cal F}_k:=\sigma(\sx^1,\sx^2,\ldots,\sx^k)$; each iterate $\sx^k$ is then $\mathcal F_{k}$-measurable for $k\geq 1$. Next, let us define the stochastic error term $\se^k := \nabla f(\sx^k) - \sg^k $ and let us introduce our main assumptions on $\{\se^k\}_k$.
\begin{assumption} \label{as:sgd}
	Given the probability space $(\Omega,{\cal F},\{{\cal F}_k\}_k,\Prob)$, the stochastic errors $\{\se^k\}_k$ are assumed to satisfy:
\begin{enumerate}[label=\textup{(\alph*)},topsep=0ex,itemsep=0ex,partopsep=0ex, leftmargin = 25pt]
 \item \textbf{Mean zero:} $\Exp[\se^k \mid \mathcal F_{k}]=0$ almost surely for all $k \geq 1$.
 \item \textbf{Bounded variance:} $\Exp[\|\se^k\|^2 \mid \mathcal F_{k}] \leq \sigma^2$ almost surely for all $k \geq 1$.
\end{enumerate}
\end{assumption}
This assumption is fairly standard in the analysis of stochastic methods \cite{borkar2009stochastic,tadic2015convergence,milzarek2023convergence}. 
We note that the convergence results presented in this subsection can be covered by our recent preprint \cite{qiu2024convergence}, which proves the global convergence, iterate convergence, and convergence rates of $\SGD$ with momentum. However, the analysis therein requires some nontrivial adaptation to the framework, including the use of auxiliary iterates and a merit function. For simplicity, we only present the convergence results tailored to $\SGD$, which does not require additional adaptations to the framework.

Setting $\sT := \frac{1}{50\sL}$, we define the sequence of indices $\{I_{k}\}_k$ inductively via
 \[
        I_1 = 1 \quad \text{and} \quad
        I_{k+1} = \max\Big\{I_k+1,\; \sup\big\{n \geq k: {\sum}_{i=k}^{n-1} \alpha_i \leq \sT\big\} \Big\},
        \quad k \geq 1.
 \]
We also consider the aggregated stochastic error terms $\{\scs_k\}_k$ and the associated event $\mathcal S_\mu$: 
 \[
    \scs_k := \max_{I_k < t \leq I_{k+1}} \Big\| {\sum}_{i=I_k}^{t-1} \alpha_i \se^i \Big\| 
    \quad \text{and} \quad
    \cS_\mu = \Big\{\omega \in \Omega: {\sum}_{k=1}^\infty \Big({\sum}_{i=1}^k \scs_i^2(\omega)\Big)^\mu < \infty \Big\}.
 \]
 Before presenting the main convergence results, we first provide the approximate descent and gradient-bounded update properties for $\{x^{I_k}\}_k$. For their proofs, we simply refer to the relevant parts in \cite{qiu2024convergence}.
 \begin{proposition}\label{lem:sgd-bound1}
     Suppose \Cref{as:func-1} and \ref{as:sgd} hold and let $\{\sx^k\}_k$ be generated by $\SGD$ with step sizes $\{\alpha_k\}_k$ satisfying 
	 \begin{equation} \label{eq:sgd-stepsizes1}
    \alpha_k > 0, \quad {\sum}_{k=1}^\infty \alpha_k = \infty \quad \text{and} \quad 
    {\sum}_{k=1}^\infty \alpha_k^2 \Big({\sum}_{i=1}^k \alpha_i\Big)^{r} < \infty, \quad \text{for some} \; r >1.
 \end{equation}
	Then, we have $\Prob(\cS_\mu) = 1$ for all $\mu>1/r$. In addition, for any $\omega \in \cS_\mu$, we consider the realizations $x^k \equiv \sx^k(\omega)$ and $s_k \equiv \scs_k(\omega)$. There then exists $K = \sK(\omega)$ such that for all $k \geq K$, it holds that:
\begin{enumerate}[label=\textup{(\alph*)},topsep=0ex,itemsep=0ex,partopsep=0ex, leftmargin = 25pt]
\item  $f(x^{I_{k+1}}) + \frac{\sT}{100} \|\nabla f(x^{I_k})\|^2 \leq f(x^{I_k}) + \frac{6}{\sT} s_k^2$.
\item $\max_{I_k < i \leq I_{k+1}} \|x^i - x^{I_k}\| \leq \frac{20}{19} (\sT \|\nabla f(x^{I_k})\| + s_k)$.
\end{enumerate}
\end{proposition}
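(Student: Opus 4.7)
The plan is to split the proof of \cref{lem:sgd-bound1} into two largely independent pieces: first, a probabilistic argument establishing $\Prob(\cS_\mu)=1$, and second, a pathwise deterministic argument on $\cS_\mu$ that simultaneously yields (a) and (b) through a Lipschitz bootstrap. The key parameter choice $\sT = 1/(50\sL)$ and the block construction of $\{I_k\}_k$ (so that $\sum_{i=I_k}^{I_{k+1}-1}\alpha_i \le \sT$) are the glue between the two pieces.

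For the probabilistic part, I would exploit that, for fixed $k$, the sequence $M_t^{(k)} := \sum_{i=I_k}^{t-1} \alpha_i \se^i$ is an $\{\mathcal F_t\}$-martingale by the zero-mean property of $\se^i$. Doob's $L^2$ maximal inequality combined with the bounded-variance assumption then gives $\Exp[\scs_k^2] \le 4\sigma^2 \sum_{i=I_k}^{I_{k+1}-1}\alpha_i^2$. Since the blocks $[I_k,I_{k+1})$ tile $\N$ and each block has total step-size mass at most $\sT$, a summation-by-parts argument together with $I_{k+1} - I_k = \cO(1) + \cO(\sum_{i=I_k}^{I_{k+1}-1}\alpha_i/\alpha_{I_k})$ lets one bound $\Exp\bigl[\bigl(\sum_{i=1}^k \scs_i^2\bigr)^\mu\bigr]$ by a constant multiple of the tails of $\sum_k \alpha_k^2 (\sum_{i=1}^k \alpha_i)^r$ for any $\mu>1/r$ (using concavity of $t\mapsto t^\mu$ and Jensen's inequality). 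The step-size hypothesis \eqref{eq:sgd-stepsizes1} then yields $\Exp\bigl[\sum_{k=1}^\infty (\sum_{i=1}^k \scs_i^2)^\mu\bigr]<\infty$, and monotone convergence gives $\Prob(\cS_\mu)=1$.

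For the deterministic inequalities, fix $\omega \in \cS_\mu$ and restrict to a tail $k \ge \sK(\omega)$ where $\alpha_k$ is small enough that $\sL \alpha_k \le 1/50$ (possible since $\alpha_k \to 0$ by the square-summability in \eqref{eq:sgd-stepsizes1}). I would address (b) first, since (a) leans on it. A telescoping of $x^{i+1}-x^i = -\alpha_i \nabla f(x^i) + \alpha_i e^i$ for $I_k \le i < t \le I_{k+1}$, combined with $\sL$-Lipschitzness of $\nabla f$, yields
\[
\|x^t - x^{I_k}\| \le \sT \|\nabla f(x^{I_k})\| + \sL\sT \max_{I_k < j \le t} \|x^j - x^{I_k}\| + s_k.
\]
Taking the max over $t$ and absorbing $\sL\sT \le 1/50$ on the left produces the coefficient $20/19$, giving (b). For (a), I would sum the descent lemma $f(x^{i+1}) \le f(x^i) - \alpha_i \langle \nabla f(x^i), g^i \rangle + \tfrac{\sL\alpha_i^2}{2}\|g^i\|^2$ across the block, split $\langle \nabla f(x^i), g^i\rangle = \|\nabla f(x^i)\|^2 - \langle \nabla f(x^i),e^i\rangle$, and use (b) plus Lipschitzness to replace $\|\nabla f(x^i)\|^2$ by $\|\nabla f(x^{I_k})\|^2$ up to higher-order terms. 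The cross term $\sum_i \alpha_i \langle \nabla f(x^i), e^i\rangle$ is handled by an Abel summation that converts it into an expression controlled by $\sT \|\nabla f(x^{I_k})\| \cdot s_k$ plus terms involving $s_k^2$, after which two calibrated Young's inequalities leave the coefficients $\sT/100$ and $6/\sT$ exactly as stated.

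The main obstacle will be the bookkeeping inside (a). Once (b) is in hand, the descent estimate looks routine, but carefully keeping track of constants so that the Lipschitz residuals $\|\nabla f(x^i)-\nabla f(x^{I_k})\|$, the stochastic cross term, and the $\sL\alpha_i^2 \|g^i\|^2$ remainder can all be absorbed into exactly $\sT/100$ on $\|\nabla f(x^{I_k})\|^2$ and $6/\sT$ on $s_k^2$ requires a precise Abel rearrangement rather than crude term-by-term estimation. A secondary subtlety lies in the probabilistic step: the exponent $\mu > 1/r$ is tight, so the Jensen-based argument must be carried out with some care to ensure that the non-linear functional $t\mapsto t^\mu$ of the quadratic variation is genuinely controlled by the stated step-size summability, and not by a strictly stronger condition.
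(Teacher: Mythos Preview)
The paper does not give a self-contained argument here: it simply records that $\Prob(\cS_\mu)=1$ is \cite[Eq.~(4.10)]{qiu2024convergence}, that (a) is \cite[Lemma~3.4]{qiu2024convergence}, and that (b) is \cite[Eq.~(A.5)]{qiu2024convergence}. Your proposal is therefore more ambitious, and large parts of it are on target --- in particular, your derivation of (b) via a Lipschitz bootstrap is exactly right (and in fact yields the tighter factor $1/(1-\sL\sT)=50/49<20/19$), and your Doob/Jensen outline for $\Prob(\cS_\mu)=1$ identifies the correct ingredients, even if the passage linking the step-size condition \eqref{eq:sgd-stepsizes1} to summability of the $\mu$-th powers of the error tails is left vague.

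There is, however, a genuine gap in your plan for (a). Summing the per-step descent inequality across the block leaves the remainder $\tfrac{\sL}{2}\sum_{i=I_k}^{I_{k+1}-1}\alpha_i^2\|g^i\|^2$, which after expanding $g^i=\nabla f(x^i)-e^i$ contains the pathwise quadratic variation $\sum_i \alpha_i^2\|e^i\|^2$. This quantity is \emph{not} controlled by $s_k^2=\max_t\|\sum_{i=I_k}^{t-1}\alpha_i e^i\|^2$: if the increments $\alpha_i e^i$ nearly cancel in the running sum, $s_k$ can be of order $\alpha_{I_k}$ while $\sum_i\alpha_i^2\|e^i\|^2$ is of order $\sT\alpha_{I_k}$, so the ratio diverges as $\alpha_{I_k}\to0$. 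The same obstruction hits your Abel treatment of the cross term, which after summation by parts produces $\sL s_k\sum_i\|x^{i+1}-x^i\|$, and the total path length again involves $\sum_i\alpha_i\|e^i\|$. The fix is simple: apply the descent lemma \emph{once} between $x^{I_k}$ and $x^{I_{k+1}}$, i.e., $f(x^{I_{k+1}})\le f(x^{I_k})+\langle\nabla f(x^{I_k}),x^{I_{k+1}}-x^{I_k}\rangle+\tfrac{\sL}{2}\|x^{I_{k+1}}-x^{I_k}\|^2$, and expand $x^{I_{k+1}}-x^{I_k}=-\sum_i\alpha_i\nabla f(x^i)+\sum_i\alpha_i e^i$. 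Now only the \emph{aggregated} error $\sum_i\alpha_i e^i$ appears, whose norm is at most $s_k$; combining with (b) and two Young inequalities then delivers (a) with the stated constants, in fact with room to spare.
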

\begin{proof}
The proof of \cref{lem:sgd-bound1} mainly relies on the work \cite{qiu2024convergence} where the stochastic gradient method with momentum is considered. First, $\Prob(\cS_\mu) = 1$ follows from \cite[Eq. (4.10)]{qiu2024convergence}. The bound in (a) restates \cite[Lemma 3.4]{qiu2024convergence} and (b) follows from \cite[Eq. (A.5)]{qiu2024convergence}.
\end{proof}
\begin{remark}
    The step size condition ${\sum}_{k=1}^\infty \alpha_k^2 ({\sum}_{i=1}^k \alpha_i)^{r} < \infty$ for some $r >1$ is not very common; it is satisfied by polynomial step sizes $\alpha_k \sim k^{-\gamma}$ with $\gamma\in (\tfrac23,1]$. Indeed, using $\alpha_k = k^{-\gamma}$, $\gamma\in(\tfrac23,1)$, and the integral test, we have
\[
{\sum}_{k=1}^\infty \alpha_k^2 \Big({\sum}_{i=1}^k \alpha_i \Big)^{r} = {\sum}_{k=1}^\infty k^{-2\gamma} \Big({\sum}_{i=1}^k i^{-\gamma} \Big)^{r} \leq {\sum}_{k=1}^\infty k^{-2\gamma} \left(1 + \tfrac{k^{1-\gamma}}{1-\gamma}\right)^{r} = \cO\Big({\sum}_{k=1}^\infty k^{-2\gamma+r(1-\gamma)}\Big).
\]
The right hand side is summable when $1<r<\frac{2\gamma-1}{1-\gamma}$. The case $\gamma=1$ is trivial.
\end{remark}
Based on the key properties presented above, the global and iterate convergence of $\SGD$ follows quite easily from \cref{thm:main}. 
\begin{thm} \label{thm:sgd-main}
 Suppose \Cref{as:func-1} and \ref{as:sgd} hold and let $\{\sx^k\}_k$ be generated by $\SGD$ with step sizes $\{\alpha_k\}_k$ satisfying \eqref{eq:sgd-stepsizes1}. Then, the following statements hold.
\begin{enumerate}[label=\textup{(\alph*)},topsep=0ex,itemsep=0ex,partopsep=0ex, leftmargin = 25pt]
 \item $\|\nabla f(\sx^k)\| \to 0$ a.s. and  $\{f(\sx^k)\}_k$ converges to some $\sct{f}^*: \Omega \to \mathbb{R}$ a.s.\;\!.
 \item If, in addition, \cref{as:kl} holds, then the event
 \[
  \left\{\omega\in\Omega: {\lim}_{k\to\infty}\,\|\sx^k(\omega)\|=\infty \;\, \text{or}\;\, \sx^k(\omega) \to \sx^*(\omega)\in\crit(f) \right\} \quad   \text{occurs a.s.\;\!.}
 \]
 \end{enumerate}
\end{thm}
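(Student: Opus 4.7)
The plan is to deduce both parts directly from \cref{thm:main} by verifying the framework \ref{C1}--\ref{C4} pathwise on the full-probability event $\cS_\mu$ of \cref{lem:sgd-bound1}. The algorithmic conditions \ref{C1}--\ref{C2} are essentially already supplied by that proposition, so the main remaining work is to match constants, handle the subsequence indices $\{I_k\}_k$, and translate the summability budget encoded in $\cS_\mu$ into the parameter conditions \ref{C3}--\ref{C4}.

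Concretely, I would first fix $\mu \in (1/r, 1)$; by \cref{lem:sgd-bound1} this guarantees $\Prob(\cS_\mu) = 1$. Along an arbitrary realization $\omega \in \cS_\mu$ with $x^k \equiv \sx^k(\omega)$, $s_k \equiv \scs_k(\omega)$, and $K := \sK(\omega)$, parts (a) and (b) of \cref{lem:sgd-bound1} give \ref{C1}--\ref{C2} for all $k \geq K$ with the identifications
\[
\beta_k = \sT, \quad a_1 = \tfrac{1}{100}, \quad p_k = \tfrac{6}{\sT} s_k^2, \quad a_2 = \tfrac{20}{19}, \quad q_k = \tfrac{20}{19}\, s_k.
\]
Since $\beta_k$ is constant, the structural requirements $0<\beta_k\leq \sT$ and $\sum_k \beta_k = \infty$ in \ref{C3}--\ref{C4} are immediate.

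The heart of the argument is then to cash in membership in $\cS_\mu$ to verify the summability requirements on $\{p_k\}_k$ and $\{q_k\}_k$. For \ref{C3}, both $\sum_k p_k < \infty$ and $\sum_k \beta_k^{-1} q_k^2 < \infty$ reduce, up to absolute constants, to $\sum_k s_k^2 < \infty$, which follows from $\omega \in \cS_\mu$. For \ref{C4}, after absorbing constants and using that $\beta_k \equiv \sT$, both sums collapse to
\[
{\sum}_{k=1}^\infty \Big[{\sum}_{t=k}^\infty s_t^2\Big]^\mu < \infty,
\]
which is precisely the defining property of $\cS_\mu$. This constant-matching step is where I expect most of the care to be required, since the framework is formulated with variable $\{\beta_k\}_k$ and one has to see that the $\beta_k$-weighting cancels cleanly in the SGD instance.

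With \ref{C1}--\ref{C3} established pathwise on $\cS_\mu$, part (a) of \cref{thm:main} yields $\|\nabla f(x^k)\| \to 0$ and convergence of $\{f(x^k)\}_k$ to some $f^*(\omega)\in\R$ for each $\omega \in \cS_\mu$, which upgrades to the almost-sure statement in (a) and defines $\sct{f}^*$ as a random variable. Under \cref{as:kl}, \ref{C4} is also in force on $\cS_\mu$, so part (b) of \cref{thm:main} produces the dichotomy $\|\sx^k(\omega)\| \to \infty$ or $\sx^k(\omega) \to \sx^*(\omega) \in \crit(f)$ on a set of full probability, which is exactly (b). The only loose end is to note that both alternatives are described by countable operations on the $\mathcal F_k$-measurable iterates, so the displayed set in the statement is a bona fide event; the genuine analytical difficulty has already been packaged into \cref{lem:sgd-bound1} and the abstract \cref{thm:main}.
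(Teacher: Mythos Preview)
Your proposal is correct and follows essentially the same route as the paper: fix $\mu\in(1/r,1)$, work pathwise on $\cS_\mu$, read off \ref{C1}--\ref{C2} from \cref{lem:sgd-bound1}, verify \ref{C3}--\ref{C4} via the summability encoded in $\cS_\mu$, and apply \cref{thm:main}. The only cosmetic difference is that the paper takes $\beta_k=1$ with $a_1=\sT/100$, $a_2=20\sT/19$, whereas you take $\beta_k=\sT$ with $a_1=1/100$, $a_2=20/19$; since $\beta_k$ is constant in both cases, the two identifications are equivalent and all the summability checks go through identically.
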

\begin{proof}
     Let us fix $\mu\in(1/r,1)$, pick an arbitrary $\omega \in \cS_\mu$ and consider the realizations $x^k \equiv \sx^k(\omega)$ and $s_k \equiv \scs_k(\omega)$. Thanks to \cref{lem:sgd-bound1}, the algorithmic conditions \ref{C1}--\ref{C2} are satisfied with 
     \[
     a_1 = \sT/ 100,\quad a_2 = 20\sT/19,\quad \beta_k = 1,\quad p_k = (6/\sT) s_k \quad \text{and} \quad q_k = (20/19)s_k.
     \]
     Let us now check \ref{C3}. By the definition of $\cS_\mu$ (and due to $\omega \in \cS_\mu$), we have ${\sum}_{k=1}^\infty ({\sum}_{i=1}^k s_i^2)^\mu < \infty$. This implies that $\sum_{k=1}^\infty s_k^2 <\infty$, which verifies \ref{C3}. Applying \cref{thm:main} (a), we complete the proof of the statement (a).  
     
     The relation ${\sum}_{k=1}^\infty ({\sum}_{i=1}^k s_i^2)^\mu < \infty$ also verifies \ref{C4}. Under \cref{as:kl}, statement (b) simply follows from the convergence result in \Cref{thm:main} (b).  
\end{proof}
\begin{remark}
To interpret \cref{thm:sgd-main} (b), let us define an event $\mathcal{X}$ that represents the non-diverging trajectories of $\{\sx^k\}_k$:
\[\mathcal{X} = \{\omega \in \Omega: \acc(\{\sx^k(\omega)\}_k) \neq \emptyset\} = \{\omega \in \Omega: {\liminf}_{k\to\infty} \|\sx^k(\omega)\| < \infty\}.\] 
Then, \cref{thm:sgd-main} (b) asserts that $\{\sx^k\}_k$ converges a.s. on the event $\mathcal{X}$. Thus, $\{\sx^k\}_k$ converges a.s. to some stationary point under the extra assumption $\Prob(\mathcal{X})=1$. This is significantly weaker than the typical a priori boundedness assumption used in other KL-based convergence analyses of stochastic methods \cite{borkar2009stochastic,milzarek2023convergence,tadic2015convergence}.    
\end{remark}

\section{Application Area II : Finite-sum Optimization}\label{sec:finite sum}
This section demonstrates the versatility of our framework by establishing iterate convergence of several widely-used algorithms for finite-sum optimization problems. Let us consider the optimization problem
\begin{equation}
	\label{LO} \min_{x\in \Rn}~f(x) := \frac{1}{n}\sum_{i=1}^{n}f_i(x),
\end{equation}
where $f_i: \Rn \to \R$ is differentiable but not necessarily convex for all $i\in [n] := \{1,\dots,n\}$. We make the following assumption on the objective function.
\begin{assumption}
	\label{as:func}
	Every component function $f_i:\Rn \to \R$, $i\in[n]$, is ${\sf L}$-smooth and bounded from below by $\bar f$.
\end{assumption}
This assumption can be guaranteed if every component function $f_i$ is $\sL_i$-smooth and bounded from below by $\bar f_i \in \R$. In this case, we set $\sL=\max_{i\in[n]} \sL_i$ and $\bar f:=\min_{i\in[n]}\bar f_i$.

\subsection{Decentralized Gradient Descent}
\label{sec:dgd}
While prior research has examined the convergence properties of decentralized gradient descent ($\DGD$), primarily in convex settings \cite{nedic2009distributed,yuan2016convergence,zeng2018nonconvex}, a comprehensive understanding of its behavior in nonconvex scenarios remains elusive.
Zeng and Yin \cite{zeng2018nonconvex} and Daneshmand et al. \cite{daneshmand2020second} have analyzed $\DGD$ with constant step sizes under the KL property. However, as highlighted in \cite[Proposition 2]{zeng2018nonconvex}, their results do not guarantee the consensus of limit points generated by the agents' iterates. Furthermore, \cite[Theorem 3.9]{daneshmand2020second} establishes convergence to a neighborhood of the set of critical points $\crit(f)$, rather than convergence to some stationary point.

In this subsection, we address these limitations by employing our proposed KL-based analysis framework. This approach enables us to establish the convergence of $\DGD$ iterates to a stationary point consensually with convergence rate guarantees.
 
Suppose that there are $n$ agents and the goal is to solve problem \eqref{LO} collaboratively. Let $x_i^1=x^1$, $i\in[n]$ be the initial points. The update step of $\DGD$ \cite{nedic2009distributed} is given as
\begin{equation}
	\label{algo:dgd}	
	x^{k+1}_i = \sum_{j=1}^n w_{ij}x^k_j - \alpha_k \nabla f_i(x_i^k),\quad \forall\, i\in[n] \; \text{and}\;  k \geq 1 
\end{equation}
where $w_{ij} \geq 0$ is the weight that neighbor $j$ assigns to the agent $i$ and $\{\alpha_k\}_k$ are step sizes satisfying
\begin{equation}
\label{eq:dgd-stepsize-req1}
\alpha_k\in\Big(0,{\frac{1}{{\sf L}n}}\Big), \quad \alpha_{k + 1} \leq \alpha_{k},\quad {\sum}_{k=1}^\infty \alpha_k = \infty, \quad {\sum}_{k=1}^\infty \alpha_k^3 <\infty, \quad \text{and} \quad \lim_{k \to \infty} \frac{\alpha_{k+1}}{\alpha_k} = 1.
\end{equation}
We make the following assumption on the weights $\{w_{ij}\}_{i,j}$, $i,j\in[n]$.

\begin{assumption}
	\label{as:matrix}
	The matrix $W=[w_{ij}]\in\R^{n\times n}$ is symmetric and has the following properties.
\begin{enumerate}[label=\textup{(\alph*)},topsep=0ex,itemsep=0ex,partopsep=0ex, leftmargin = 25pt]
		\item $W$ is doubly stochastic, i.e., $\nul(I-W)=\spa(\one)$.
		\item $W$ has $1$-bounded eigenvalues, i.e., $-I\prec W \preceq I.$
	\end{enumerate}
\end{assumption} 
Here, $\one \in \Rn$ is the vector of all ones. Condition (a) can be equivalently written as $W\one=\one$ and $\one^\top W = \one^\top$. Condition (b) means that the second largest magnitude eigenvalue of $W$ lies in the interval $(0,1)$. This assumption is standard in the study of distributed optimization, see, e.g., \cite{shi2015extra,nedic2009distributed,yuan2016convergence,huang2023distributed}. To simplify the notations, we denote the average iterates of the agents and the corresponding accumulation points set as 
\[
\bar x^k:=\frac1n\,{\sum_{i=1}^n x_i^k}\quad \text{and} \quad \acc(\{\bar x^k\}_k):=\{x\in\Rn : {\liminf}_{k\to\infty}\|\bar x^k-x\|=0\}.\]
Before presenting the main convergence results, we first provide an upper bound for the consensus error and the descent-type property for $\{\bar x^k\}_k$. The proofs are postponed to \cref{proof:lemma:dgd} and \cref{proof:dgd-descent}, respectively.
\begin{proposition}
\label{lemma:dgd}
Let \cref{as:func} and \ref{as:matrix} hold and let $\{x_i^k\}_{k\geq 1,i\in[n]}$ be generated by $\DGD$ with step sizes $\{\alpha_k\}_k$ satisfying \eqref{eq:dgd-stepsize-req1}. Then there exists $\sG>0$ such that 
$\sum_{i=1}^n \|x_i^k- \bar x^k\|^2 \leq \sG \alpha_k^2$. 
\end{proposition}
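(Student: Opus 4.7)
The plan is to rewrite $\DGD$ in matrix form, extract a contractive linear recursion for the consensus error, and couple it with a descent-type estimate on the averaged iterate. Stack iterates and local gradients into $X^k,G^k\in\R^{n\times d}$ with rows $x_i^{k\top}$ and $\nabla f_i(x_i^k)^\top$, so that the update reads $X^{k+1} = WX^k - \alpha_k G^k$. Let $J := \tfrac{1}{n}\one\one^\top$ be the projection onto the consensus subspace and set $M := W - J$, $Z^k := (I-J)X^k$. The identities $W\one = \one$ and $\one^\top W = \one^\top$ give $JW = WJ = J$, whence $(I-J)W = M$ and $MJ = 0$; noting $\|Z^k\|_F^2 = \sum_{i=1}^n\|x_i^k - \bar x^k\|^2$ is exactly the quantity to be bounded, we arrive at
\[
Z^{k+1} = MZ^k - \alpha_k (I-J)G^k, \qquad \|Z^{k+1}\|_F \le \rho\, \|Z^k\|_F + \alpha_k\, \|G^k\|_F,
\]
where $\rho := \|M\|_2 \in [0,1)$ by \cref{as:matrix}\,(b).

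It then remains to bound $\|G^k\|_F$ uniformly. Using $L$-smoothness of each $f_i$, the splitting $\|\nabla f_i(x_i^k)\|^2 \le 2\|\nabla f_i(\bar x^k)\|^2 + 2L^2\|x_i^k - \bar x^k\|^2$, and the inequality $\|\nabla f_i(\bar x^k)\|^2 \le 2L(f_i(\bar x^k) - \bar f)$ from \cref{eq:L-smooth}, I would get
\[
\|G^k\|_F^2 \le 4Ln\,(f(\bar x^k) - \bar f) + 2L^2\, \|Z^k\|_F^2,
\]
reducing the task to controlling $f(\bar x^k)$. For this I would apply the $L$-smooth descent lemma to the averaged dynamics $\bar x^{k+1} = \bar x^k - \alpha_k v^k$ with $v^k := \tfrac{1}{n}\sum_{i=1}^n \nabla f_i(x_i^k)$, using the bias estimate $\|v^k - \nabla f(\bar x^k)\|^2 \le (L^2/n)\|Z^k\|_F^2$; standard Young/Cauchy--Schwarz manipulations, combined with $\alpha_k < 1/(Ln)$, should yield
\[
f(\bar x^{k+1}) - \bar f \;\le\; (f(\bar x^k) - \bar f) \;-\; \tfrac{\alpha_k}{4}\,\|\nabla f(\bar x^k)\|^2 \;+\; C_1\, \alpha_k\, \|Z^k\|_F^2,
\]
with $C_1 = C_1(L,n)$. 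Unfortunately this only controls $f(\bar x^{k+1})$ once $\|Z^k\|_F$ is already small, so we are caught in a circular dependence.

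To break the loop, I would run a joint induction (equivalently, a bootstrap on the first violation time) establishing simultaneously (i) $\|Z^k\|_F^2 \le \sG\, \alpha_k^2$ and (ii) $f(\bar x^k) - \bar f \le M$, for constants $\sG, M$ to be chosen. The base case is immediate since $Z^1 = 0$ (all agents start at $x^1$). In the inductive step, feeding (i) into the descent inequality contributes $O(\alpha_k^3)$ to the change in $f(\bar x^{\cdot})$; telescoping and using $\sum_k \alpha_k^3 < \infty$ yields (ii) with $M$ depending at most linearly on $\sG$. Plugging (i) and (ii) into the bound for $\|G^k\|_F$ and then into the consensus recursion gives $\|Z^{k+1}\|_F \le (\rho\sqrt{\sG} + C_3(\sG))\,\alpha_k$ with $C_3$ sublinear in $\sqrt{\sG}$. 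The ratio condition $\alpha_{k+1}/\alpha_k\to 1$ then permits passage from $\alpha_k$ to $\alpha_{k+1}$ with arbitrarily small multiplicative loss once $k$ is past some threshold $K_0$; choosing $\sG$ large enough closes the recursion for $k \ge K_0$, while on the finite range $k < K_0$ one bounds $\|Z^k\|_F$ crudely and further enlarges $\sG$ (using $\alpha_k \ge \alpha_{K_0} > 0$). The main obstacle is this circular dependence between controlling consensus and controlling the averaged function value; the three step-size conditions in \eqref{eq:dgd-stepsize-req1}, namely $\alpha_k < 1/(Ln)$, $\sum_k\alpha_k^3 < \infty$, and $\alpha_{k+1}/\alpha_k \to 1$, play complementary roles of making descent strict, rendering the consensus-induced perturbation summable, and preserving the contraction $\rho$ under the step-size decay.
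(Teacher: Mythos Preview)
Your route is different from the paper's: you use a one-step contraction $\|Z^{k+1}\|_F \le \rho\|Z^k\|_F + \alpha_k\|G^k\|_F$ and try to close via a joint induction on the consensus error and on $f(\bar x^k)$, whereas the paper unrolls the recursion fully and controls the \emph{sums} $\sum_k\alpha_k\|Z^k\|_F^2$ and $\sum_k\alpha_k^3(f(\bar x^k)-\bar f)$ against each other. Your closing step, however, has a genuine gap. The claim that $C_3(\sG)$ is sublinear in $\sqrt{\sG}$ is false as written: since you telescope (ii) from the start, $M$ takes the form $M_0 + c\,\sG$ with $c = C_1\sum_{k\ge 1}\alpha_k^3$ a \emph{fixed} positive constant, so $\sqrt{4\sL nM}$ is linear in $\sqrt{\sG}$ with slope $\sqrt{4\sL n c}$. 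The closing condition then reads $(\rho + \sqrt{4\sL nc} + \sL\sqrt{2}\,\alpha_k)\sqrt{\sG} + \text{const} \le (\alpha_{k+1}/\alpha_k)\sqrt{\sG}$, and nothing in the hypotheses forces $\sqrt{4\sL nc} < 1-\rho$ (the tail sum $\sum\alpha_k^3$ is merely finite, not small). Enlarging $\sG$ does not help.

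The fix is to make $K_0$ do double duty: choose it so that, in addition to $\alpha_{k+1}/\alpha_k$ being close to $1$, the \emph{tail} $\sum_{j\ge K_0}\alpha_j^3$ is small, and telescope (ii) from $K_0$ rather than from $k=1$ (with $M_0 = f(\bar x^{K_0})-\bar f$, a fixed number once $K_0$ is fixed). This shrinks $c$ to $C_1\sum_{j\ge K_0}\alpha_j^3$, which can be pushed below $(1-\rho)^2/(4\sL n)$, and then taking $\sG$ large closes the induction. The paper avoids this delicate constant-matching: it first bounds $\sum_k\alpha_k\|Z^k\|_F^2$ by $\text{const}+\text{const}\cdot\sum_k\alpha_k^3(f(\bar x^k)-\bar f)$, substitutes into the descent lemma to obtain $\phi_{t+1} \le \text{const} + \sum_{k=1}^t\gamma_k\phi_k$ with $\gamma_k\propto\alpha_k^3$ summable, and deduces $\sup_t\phi_t<\infty$ by a tail argument on $\sum\gamma_k$; only afterwards does it return to the pointwise bound via a lemma controlling $\sum_{i\le k}a_i\rho^{k-i}$ by a multiple of $a_{k+1}$ (this is where $\alpha_{k+1}/\alpha_k\to1$ enters).
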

\begin{proposition}
		\label{prop:dgd}
	Let \cref{as:func} and \ref{as:matrix} hold and let $\{x_i^k\}_{k\geq 1,i\in[n]}$ be generated by $\DGD$. Further assume that the step sizes $\{\alpha_k\}_k$ satisfy \eqref{eq:dgd-stepsize-req1}. Then, there is $\sG>0$ such that for all $k\geq 1$, it holds that
 \begin{enumerate}[label=\textup{(\alph*)},topsep=0ex,itemsep=0ex,partopsep=0ex, leftmargin = 25pt]
\item $f(\bar x^{k+1})   \leq f(\bar x^k) - \frac{\alpha_k}{2}\|\nabla f(\bar x^k)\|^2 + \frac{\sL\sG}{2n}\alpha_k^3$.	
\item $\|\bar x^k-\bar x^{k+1}\| \leq \alpha_k \|\nabla f(\bar x^k)\| + \frac{\sL\sqrt{\sG}}{\sqrt{n}}\alpha_k^2$.
\end{enumerate}
\end{proposition}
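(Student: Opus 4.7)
The plan is to reduce everything to the dynamics of the average iterate $\bar x^k := \frac{1}{n}\sum_i x_i^k$ and then invoke the $\sL$-smoothness of $f$ together with the consensus bound from \cref{lemma:dgd}. Because $W$ is doubly stochastic, averaging the $\DGD$ update over $i$ collapses the gossip term and yields the recursion $\bar x^{k+1} = \bar x^k - \alpha_k g^k$, where $g^k := \frac{1}{n}\sum_{i=1}^n \nabla f_i(x_i^k)$ is an inexact version of $\nabla f(\bar x^k) = \frac{1}{n}\sum_{i=1}^n \nabla f_i(\bar x^k)$. Setting $\delta^k := g^k - \nabla f(\bar x^k)$, I combine $\sL$-smoothness of each $f_i$ with Jensen's inequality and \cref{lemma:dgd} to obtain the key estimate
\[
\|\delta^k\|^2 \leq \frac{1}{n}\sum_{i=1}^n \|\nabla f_i(x_i^k) - \nabla f_i(\bar x^k)\|^2 \leq \frac{\sL^2}{n}\sum_{i=1}^n \|x_i^k - \bar x^k\|^2 \leq \frac{\sL^2 \sG}{n}\,\alpha_k^2,
\]
which controls the discrepancy between $g^k$ and $\nabla f(\bar x^k)$ uniformly in $k$.

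Part (b) then follows immediately from the triangle inequality: $\|\bar x^{k+1} - \bar x^k\| = \alpha_k \|g^k\| \leq \alpha_k\|\nabla f(\bar x^k)\| + \alpha_k\|\delta^k\|$, and the bound on $\|\delta^k\|$ above contributes precisely the $(\sL\sqrt{\sG}/\sqrt n)\,\alpha_k^2$ term. For part (a), I apply the descent lemma to the $\sL$-smooth function $f$ along the step $\bar x^k \mapsto \bar x^{k+1}$ and rewrite the inner product $\langle \nabla f(\bar x^k), g^k\rangle$ via the polarization identity $\langle a, b\rangle = \tfrac12(\|a\|^2 + \|b\|^2 - \|a-b\|^2)$. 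After regrouping, this produces
\[
f(\bar x^{k+1}) \leq f(\bar x^k) - \tfrac{\alpha_k}{2}\|\nabla f(\bar x^k)\|^2 - \tfrac{\alpha_k}{2}(1-\sL\alpha_k)\|g^k\|^2 + \tfrac{\alpha_k}{2}\|\delta^k\|^2.
\]
The step-size condition $\alpha_k < 1/(\sL n) \leq 1/\sL$ makes the middle term non-positive, so it may be discarded; plugging in the bound on $\|\delta^k\|^2$ and absorbing the extra factor of $\sL$ into the generic constant $\sG$ yields the stated inequality.

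The hard work is already carried out by \cref{lemma:dgd}, so there is no serious obstacle here; what remains is a standard bias-variance style argument for inexact gradient descent on the averaged iterate. The two points that require attention are (i) keeping the step-size bound $\alpha_k \leq 1/(\sL n)$ explicit so that the unwanted $\|g^k\|^2$ term has a non-positive coefficient and can be dropped without hurting the descent direction, and (ii) tracking constants carefully so that the final estimates match the stated form up to a harmless redefinition of the generic constant $\sG$.
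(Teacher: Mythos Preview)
Your proposal is correct and follows essentially the same route as the paper: the paper isolates the descent-lemma computation (your polarization-identity step) into a separate auxiliary lemma and then plugs in the consensus bound from \cref{lemma:dgd}, which is exactly what you do inline. The handling of constants (absorbing an extra factor of $\sL$ into the generic $\sG$) is also treated loosely in the paper, so your remark on that point is consistent with the source.
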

Based on the properties presented above, we are able to apply \cref{thm:main} to establish the convergence for the averaged iterates $\{\bar x^k\}_k$. Then, by using \cref{lemma:dgd}, we can show that each agent converges consensually to the stationary point of $f$. Importantly, this convergence result holds without requiring a unique or isolated stationary point.
\begin{thm}[Convergence of $\DGD$]
\label{thm:dgd}
Let \cref{as:func} and \ref{as:matrix} hold and let $\{x_i^k\}_{k\geq 1,i\in[n]}$ be generated by $\DGD$ with the step sizes $\{\alpha_k\}_k$ satisfying 
\eqref{eq:dgd-stepsize-req1} and
\begin{equation}
	\label{eq:step-size-dgd}
{\sum}_{k=1}^{\infty}\alpha_k\Big({\sum}_{i=k}^{\infty} \alpha_i^3\Big)^\mu<\infty\quad \text{for some }\mu\in(0,1).
\end{equation}
Then, $\lim_{k\to\infty}\|\nabla f(x^k_i)\|=0$ and $\{f(x_i^k)\}_k$ converges to some $f^*\in\R$ for all $i\in[n]$. Moreover, if \cref{as:kl} holds, then $\{x_i^k\}_{k}$ either converges to some stationary point of $f$ consensually or we have $\|x_i^k\| \to \infty$ for all $i\in[n]$.  
\end{thm}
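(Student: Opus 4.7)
The plan is to apply the abstract convergence theorem \cref{thm:main} to the averaged sequence $\{\bar x^k\}_k$, and then transfer the conclusions to each agent using the consensus bound in \cref{lemma:dgd}. I would identify the framework parameters as
\[
I_k = k, \quad \beta_k = \alpha_k, \quad a_1 = \tfrac12, \quad a_2 = 1, \quad p_k = \tfrac{\sL\sG}{2n}\alpha_k^3, \quad q_k = \tfrac{\sL\sqrt{\sG}}{\sqrt n}\alpha_k^2.
\]
With these choices, the two parts of \cref{prop:dgd} are exactly the approximate descent \ref{C1} and the gradient-bounded update \ref{C2} specialized to $\{\bar x^k\}_k$ (the $\max$ in \ref{C2} being trivial since $I_{k+1} = I_k+1$).

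Next I would verify the parameter requirements. Under this identification, $p_k$ and $\beta_k^{-1}q_k^2$ are, up to multiplicative constants, both equal to $\alpha_k^3$. Therefore \ref{C3} collapses to the hypotheses already contained in \eqref{eq:dgd-stepsize-req1}, while both tail sums appearing in \ref{C4} reduce — again up to constants — to $\sum_{k\geq 1}\alpha_k(\sum_{t\geq k}\alpha_t^3)^\mu$, which is finite by the extra assumption \eqref{eq:step-size-dgd}. Consequently, \cref{thm:main}(a) gives $\|\nabla f(\bar x^k)\|\to 0$ and convergence of $\{f(\bar x^k)\}_k$ to some $f^*\in\R$, and under \cref{as:kl}, \cref{thm:main}(b) yields the dichotomy: either $\bar x^k \to x^\ast \in \crit(f)$ or $\|\bar x^k\| \to \infty$.

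To lift these statements to every agent, I would note that $\sum_k \alpha_k^3 < \infty$ combined with the monotonicity in \eqref{eq:dgd-stepsize-req1} forces $\alpha_k \to 0$, so \cref{lemma:dgd} gives $\max_{i\in[n]}\|x_i^k - \bar x^k\|\leq\sqrt{\sG}\,\alpha_k\to 0$. The $\sL$-smoothness of $\nabla f$ then yields $\|\nabla f(x_i^k)-\nabla f(\bar x^k)\|\to 0$, so $\|\nabla f(x_i^k)\|\to 0$ for every $i\in[n]$, while continuity of $f$ delivers $f(x_i^k)\to f^*$; this establishes statement (a). The dichotomy from \cref{thm:main}(b) then transfers verbatim: in the convergent case every agent inherits the common limit $x^\ast$ (i.e., consensus), while in the divergent case $\|\bar x^k\|\to\infty$ forces $\|x_i^k\|\to\infty$ for each $i$.

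The only real obstacle is the careful bookkeeping in checking \ref{C3}--\ref{C4}: one must observe that the approximation-error term $q_k$ enters these conditions only through $\beta_k^{-1}q_k^2$, so with $q_k\sim\alpha_k^2$ and $\beta_k=\alpha_k$ it turns into the same cubic tail $\alpha_k^3$ already produced by $p_k$. This is precisely why the cubic summability $\sum\alpha_k^3<\infty$ in \eqref{eq:dgd-stepsize-req1} suffices for \ref{C3}, and why the single additional condition \eqref{eq:step-size-dgd} simultaneously controls both tail sums in \ref{C4}. Once this matching is recognized, the theorem is essentially a direct corollary of \cref{thm:main}, \cref{prop:dgd}, and \cref{lemma:dgd}.
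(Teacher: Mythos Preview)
Your proposal is correct and follows the paper's proof almost line for line: the same identification of $I_k,\beta_k,a_1,a_2,p_k,q_k$, the same observation that $\beta_k^{-1}q_k^2$ and $p_k$ are both constant multiples of $\alpha_k^3$ (so \ref{C3}--\ref{C4} reduce to \eqref{eq:dgd-stepsize-req1} and \eqref{eq:step-size-dgd}), and the same transfer from $\bar x^k$ to each $x_i^k$ via \cref{lemma:dgd}. One small imprecision to tighten: bare continuity of $f$ does \emph{not} give $f(x_i^k)\to f^*$ from $\|x_i^k-\bar x^k\|\to 0$, since the sequences may be unbounded; instead use $\sL$-smoothness together with $\|\nabla f(\bar x^k)\|\to 0$, e.g.\ $|f(x_i^k)-f(\bar x^k)|\leq \|\nabla f(\bar x^k)\|\,\|x_i^k-\bar x^k\|+\tfrac{\sL}{2}\|x_i^k-\bar x^k\|^2\to 0$, which is exactly what the paper does in \eqref{eq:dgd-Lipschitz}.
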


\begin{proof}
Clearly, all conditions in \cref{lemma:dgd} and \ref{prop:dgd} are satisfied.
Using \cref{prop:dgd}, we verify \ref{C1}--\ref{C2} with 
\begin{equation*}
	I_k = k, \quad \beta_k = \alpha_k, \quad p_k = {\textstyle\frac{\sL\sG}{2n}} \cdot \alpha_k^3, \quad q_k= {\textstyle\frac{\sL\sqrt{\sG}}{\sqrt{n}}} \cdot \alpha_k^2, \quad a_1 = {\textstyle \frac12}, \quad \text{and}\quad a_2 =1.
\end{equation*}
 Moreover, the step sizes $\{\alpha_k\}_k$ satisfy \ref{C3}--\ref{C4}. It follows from \cref{thm:main} (a) that $\|\nabla f(\bar{x}^k)\|\to 0$ and $f(\bar x^k)\to f^*\in\R$ as $k$ tends to infinity. Utilizing the Lipschitz continuity of $f$, we have for all $i\in[n]$ that $\|\nabla f(x^k_i)\| \leq \|\nabla f(\bar x^k)\| + \sL\|x^k_i- \bar x^k\|$, together with $\|\bar{x}^k-x_i^k\| \to 0$ (implied by \cref{lemma:dgd}), this yields $\|\nabla f(x^k_i)\| \to 0$ as $k$ tends to infinity. Based on the Lipschitz continuity of $\nabla f$ and \cite[Theorem 2.1.5]{nesterov2018lectures}, it further follows
 \begin{equation}
     \label{eq:dgd-Lipschitz}
     \begin{aligned}
          |f(\bar x^k) - f(x_i^k)| &\leq \max\{\|\nabla f(\bar x^k)\|,\|\nabla f(x_i^k)\|\}\cdot \|\bar x^k - x_i^k\| + \tfrac{\sL}{2}\|\bar x^k - x_i^k\|^2 \\&\leq \tfrac{1}{2\sL}\max\{\|\nabla f(\bar x^k)\|^2,\|\nabla f(x_i^k)\|^2\} + \sL\|\bar x^k - x_i^k\|^2,
     \end{aligned}
 \end{equation}
 where the last line is due to $\|a\|\cdot \|b\| \leq \frac{1}{2\sL}\|a\|^2 + \frac{\sL}{2} \|b\|^2$. 
 Then, based on $\|\nabla f(\bar x^k)\| \to 0$, $\|\nabla f(x_i^k)\|\to 0$ and $\|\bar x^k - x_i^k\|\to 0$, we have $|f(\bar x^k) - f(x_i^k)|\to 0$. This, along with $f(\bar x^k)\to f^*$, implies that $f(x_i^k) \to f^*$ for all $i\in[n]$.

 If there is an agent $i\in[n]$ such that $\|x_i^k\|\to\infty$, then from $\|\bar x^k - x_i^k\|\to 0$, we have $\|\bar x^k\|\to \infty$ and $\|x_j^k\|\to\infty$ for all $j\in[n]$. Now, let us consider the case where $\liminf_{k\to\infty} \|\bar x^k\|<\infty$ and \cref{as:kl} holds. \cref{thm:main} (b) implies that $\{\bar x^k\}_k$ converges to some stationary point $x^*$ of $f$. Since $\|\bar x^k - x_i^k\|\to 0$, we conclude that $\{x_i^k\}_k$ converges to $x^*$ for all $i\in[n]$ as well.
\end{proof}

To the best of our knowledge, \cref{thm:dgd} establishes the first iterate convergence of $\DGD$ in the nonconvex setting. This is achieved by utilizing the proposed KL-based analysis framework. Furthermore, our framework enables us to characterize the convergence rates for $\DGD$.

\begin{thm}\label{thm:rate-dgd}
	Let \cref{as:kl}, \ref{as:func} and \ref{as:matrix} hold and let $\{x_i^k\}_{k\geq 1,i\in[n]}$ be generated by $\DGD$ with step sizes $\{\alpha_k\}_k$ of the form $\alpha_k=\alpha/k^\gamma, \alpha>0, \gamma\in(0,1]$. 
	Then, the following statements hold for all $i\in[n]$.
 \begin{enumerate}[label=\textup{(\alph*)},topsep=0ex,itemsep=0ex,partopsep=0ex, leftmargin = 25pt]
  \item If $\acc(\{\bar x^k\}_k)$ is non-empty and $\gamma\in(\frac12,1)$, then $x_i^k \to x^*\in\crit(f)$ and $f(x_i^k)\to f^*:=f(x^*) $ and we have
	 \[
	  |f(x_i^k)-f^*| = \cO(k^{-\psi(\theta,\gamma)}), \quad \|\nabla f(x_i^k)\|^2 = \cO(k^{-\psi(\theta,\gamma)}),\quad   \|x_i^k - x^*\| = \cO(k^{-\varphi(\theta,\gamma)}),\quad  \gamma\in(\tfrac{1}{2},1),
   \]
   where $\theta$ denotes the KL-exponent of $f$ at $x^*$ and
   \[\psi(\theta,\gamma):=	\min\{3\gamma-1, {\textstyle \frac{1-\gamma}{2\theta-1} } \},\quad \varphi(\theta,\gamma):=	\min\{2\gamma-1, {\textstyle \frac{(1-\gamma)(1-\theta)}{2\theta-1}}\}.
	 \]
Moreover, if $\theta=\half$, $\gamma=1$ and $\alpha > 8/\sC^2$, then it holds that $|f(x_i^k)-f^*| = \cO(k^{-2})$, $\|\nabla f(x_i^k)\|^2 = \cO(k^{-2})$, and $\|x_i^k - x^*\| = \cO(k^{-1})$.
\item If $\{\bar x^k\}_k$ is bounded, then $f(x_i^k)\to f^*$ and $|f(x_i^k)-f^*| = \cO(k^{-\psi(\theta,\gamma)})$ and $\|\nabla f(x_i^k)\|^2 = \cO(k^{-\psi(\theta,\gamma)})$ continue to hold in the case $\gamma\in(\frac{1}{3},\frac{1}{2}]$ with $\theta$ being the uniformized KL-exponent over the set $\acc(\{\bar x^k\}_k)$.
\end{enumerate}
\end{thm}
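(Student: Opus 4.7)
The plan is to reduce the statement to a direct application of \cref{thm:convergence rate} for the averaged sequence $\{\bar x^k\}_k$, and then transfer the obtained rates to each individual agent sequence $\{x_i^k\}_k$ via the consensus bound from \cref{lemma:dgd}. First I verify that $\{\bar x^k\}_k$ fits the framework \ref{R1}--\ref{R3}. \Cref{prop:dgd}(a)--(b) provides exactly the approximate descent and gradient-bounded update with parameters
\[
    b = {\textstyle \frac{1}{2}}, \quad \tilde b = {\textstyle \frac{\sL\sG}{2n}}, \quad c = \max\big\{1, {\textstyle \frac{\sL\sqrt{\sG}}{\sqrt{n}}}\big\}, \quad p = 3, \quad q = 2,
\]
so that $p+1 = 2q = 4$. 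The step size $\alpha_k = \alpha/k^\gamma$ fulfills \ref{R3} whenever $\gamma \in (\tfrac{1}{3}, 1]$, which covers both ranges in (a) and (b).

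Once the framework is in place, part (a) follows by applying \cref{thm:convergence rate}(a) to $\{\bar x^k\}_k$: the hypothesis that $\acc(\{\bar x^k\}_k)$ is non-empty together with $\gamma > \tfrac{2}{p+1} = \tfrac{1}{2}$ yields $\bar x^k \to x^* \in \crit(f)$, $f(\bar x^k) \to f^* := f(x^*)$, and the rates
\[
|f(\bar x^k) - f^*| = \cO(k^{-\psi(\theta,\gamma)}), \quad \|\nabla f(\bar x^k)\|^2 = \cO(k^{-\psi(\theta,\gamma)}), \quad \|\bar x^k - x^*\| = \cO(k^{-\varphi(\theta,\gamma)}),
\]
with the exact expressions for $\psi, \varphi$ matching those in the statement after substituting $p=3$. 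The special case $\theta = \tfrac12$, $\gamma = 1$ falls out from the corresponding clause in \cref{thm:convergence rate}(a) upon noting that the threshold $\alpha > \tfrac{2(p-1)}{b\sC^2}$ becomes $\alpha > 8/\sC^2$. For part (b), \cref{thm:convergence rate}(b) applied to the bounded sequence $\{\bar x^k\}_k$ delivers the function value and gradient rates on $\gamma \in (\tfrac{1}{3}, \tfrac{1}{2}]$ with $\theta$ being the uniformized KL-exponent over $\acc(\{\bar x^k\}_k)$ (cf.\ \cref{lem:uniform-kl}).

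The remaining task is to transfer the rates from $\bar x^k$ to each $x_i^k$. By \cref{lemma:dgd} the consensus error satisfies $\|x_i^k - \bar x^k\| \leq \sqrt{\sG}\,\alpha_k = \cO(k^{-\gamma})$. Combining this with the bound $\|\nabla f(x_i^k)\| \leq \|\nabla f(\bar x^k)\| + \sL\|x_i^k - \bar x^k\|$ and the squared inequality $\|\nabla f(x_i^k)\|^2 \leq 2\|\nabla f(\bar x^k)\|^2 + 2\sL^2\|x_i^k - \bar x^k\|^2$ yields
\[
\|\nabla f(x_i^k)\|^2 = \cO(k^{-\psi(\theta,\gamma)}) + \cO(k^{-2\gamma}),
\]
and the consensus term is absorbed because $\psi(\theta,\gamma) \leq p\gamma - 1 = 3\gamma-1 \leq 2\gamma$. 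For the function gap I reuse the bound \cref{eq:dgd-Lipschitz} established in the proof of \cref{thm:dgd},
\[
|f(\bar x^k) - f(x_i^k)| \leq {\textstyle \frac{1}{2\sL}}\max\{\|\nabla f(\bar x^k)\|^2,\|\nabla f(x_i^k)\|^2\} + \sL\|\bar x^k - x_i^k\|^2,
\]
which gives $|f(x_i^k) - f^*| = \cO(k^{-\psi(\theta,\gamma)})$ by the same absorption argument. Finally, for the iterate rate in (a), the triangle inequality $\|x_i^k - x^*\| \leq \|x_i^k - \bar x^k\| + \|\bar x^k - x^*\|$ gives $\cO(k^{-\gamma}) + \cO(k^{-\varphi(\theta,\gamma)})$, and since $\varphi(\theta,\gamma) \leq 2\gamma - 1 \leq \gamma$, the consensus error is again dominated.

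The main technical point (rather than a true obstacle) is checking that each consensus-induced error term is of lower order than the corresponding rate inherited from $\bar x^k$; the inequalities $\psi(\theta,\gamma) \leq 2\gamma$ and $\varphi(\theta,\gamma) \leq \gamma$, which follow structurally from $p=3$ and $\gamma \leq 1$, make this absorption clean and uniform in $\theta$. No additional KL machinery is required beyond what \cref{thm:convergence rate} already encapsulates.
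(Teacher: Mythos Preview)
Your proposal is correct and follows essentially the same approach as the paper: verify \ref{R1}--\ref{R3} for $\{\bar x^k\}_k$ via \cref{prop:dgd} with $p=3$, $q=2$, $b=\tfrac12$, apply \cref{thm:convergence rate} to obtain the rates for the averaged sequence, and then transfer them to each agent via \cref{lemma:dgd} and \cref{eq:dgd-Lipschitz}, using the structural inequalities $\psi(\theta,\gamma)\leq 2\gamma$ and $\varphi(\theta,\gamma)\leq \gamma$ to absorb the consensus error. The only cosmetic difference is that you use the (correct) factor-$2$ squared bound $\|\nabla f(x_i^k)\|^2 \leq 2\|\nabla f(\bar x^k)\|^2 + 2\sL^2\|x_i^k-\bar x^k\|^2$, whereas the paper writes the inequality without the factor $2$; this has no effect on the $\cO$-conclusion.
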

\begin{proof}
    The recursions in \cref{prop:dgd} and specific step sizes $\alpha_k=\alpha/k^\gamma$ imply \ref{R1}--\ref{R3} with
    \[
    p=3,\quad q=2,\quad b=\tfrac12,\quad \tilde b=\tfrac{\sL\sG}{2n}\quad \text{and} \quad c=\max\{1,\tfrac{\sL\sqrt{\sG}}{\sqrt{n}}\}.
    \]
        When $\acc(\{\bar x^k\}_k)$ is non-empty and $\gamma\in(\frac12,1]$, \cref{thm:convergence rate} (a) yields $\bar x^k \to x^*\in\crit(f)$ and $\{f(\bar x^k)\}_k$ converges to $f^*:=f(x^*)$ with the rates 
     \[
	  |f(\bar x^k)-f^*| = \cO(k^{-\psi(\theta,\gamma)}), \quad \|\nabla f(\bar x^k)\|^2 = \cO(k^{-\psi(\theta,\gamma)}),\quad   \|\bar x^k - x^*\| = \cO(k^{-\varphi(\theta,\gamma)}),\quad  \gamma\in(\tfrac{1}{2},1),
   \]
   where $\theta$ denotes the KL-exponent of $f$ at $x^*$ and
   \[\psi(\theta,\gamma):=	\min\{3\gamma-1, {\textstyle \frac{1-\gamma}{2\theta-1} } \},\quad \varphi(\theta,\gamma):=	\min\{2\gamma-1, {\textstyle \frac{(1-\gamma)(1-\theta)}{2\theta-1}}\}.
	 \]
Moreover, if $\theta=\half$, $\gamma=1$ and $\alpha > 8/\sC^2$, then it holds that $|f(\bar x^k)-f^*| = \cO(k^{-2})$, $\|\nabla f(\bar x^k)\|^2 = \cO(k^{-2})$, and $\|\bar x^k - x^*\| = \cO(k^{-1})$. By \cref{lemma:dgd} and $\alpha_k = \alpha/k^\gamma$, it holds for all $i\in[n]$ that $\|\bar x^k - x_i^k\|^2=\cO(k^{-2\gamma})$. Hence, due to $\psi(\theta,\gamma) \leq 2\gamma$ and $\varphi(\theta,\gamma) \leq \gamma$, we have for all $i\in[n]$, 
\[\|\nabla f(x_i^k)\|^2 \leq \sL^2\|\bar x^k - x_i^k\|^2 + \|\nabla f(\bar x^k)\|^2  = \cO(k^{-\psi(\theta,\gamma)}) \; \text{and} \; \|x_i^k - x^*\| \leq \|x_i^k - \bar x^k \| + \|\bar x^k - x^*\|  = \cO(k^{-\varphi(\theta,\gamma)}). \]
Invoking \eqref{eq:dgd-Lipschitz}, we obtain \[|f(\bar x^k) - f(x_i^k)|\leq \tfrac{1}{2\sL}\max\{\|\nabla f(\bar x^k)\|^2,\|\nabla f(x_i^k)\|^2\} + \sL\|\bar x^k - x_i^k\|^2 = \cO(k^{-\psi(\theta,\gamma)}).\] Since $|f(\bar x^k)-f^*| = \cO(k^{-\psi(\theta,\gamma)})$, it follows from the triangle inequality that
 \[
 | f(x_i^k) - f^*| \leq |f(\bar x^k) - f(x_i^k)| + |f(\bar x^k)-f^*| = \cO(k^{-\psi(\theta,\gamma)})\quad \text{for all $i\in[n]$}.
 \]
 When $\theta = \tfrac12$, $\gamma=1$, $\alpha>8/\sC^2$, it holds that $\|\nabla f(x_i^k)\|^2 = \cO(k^{-2})$, $| f(x_i^k) - f^*|  = \cO(k^{-2})$ and $\|x_i^k - x^*\|=\cO(k^{-1})$ for all $i\in[n]$. Finally, when $\{\bar x^k\}_k$ is bounded, the result in (b) follows directly from \cref{thm:convergence rate} (b).
\end{proof}
As discussed in \cref{rem:convergence rate}, when the KL exponent $\theta\in[\frac12,1)$ is known, the optimal choice $\gamma^*$ of the step size parameter is $\gamma^*=\tfrac{\theta}{3\theta-1}$. In this case, we have for all agent $i\in[n]$
\[
\|\nabla f(x_i^k)\|^2 = \cO(k^{-\frac{1}{3\theta-1}}), \quad |f(x_i^k)-f^*| = \cO(k^{-\frac{1}{3\theta-1}}) \quad \text{and} \quad \|x_i^k - x^*\| = \cO(k^{-\frac{1-\theta}{3\theta-1}}),
\]
since the choice $\gamma = \gamma^*$ ensures $\gamma > \tfrac12$. As a consequence of \cref{thm:rate-dgd} (a), this choice of step sizes guarantees the iterate convergence of $\DGD$ for all agents $i\in[n]$ (without requiring additional boundedness of the average iterates). 

The successful application of our KL-based framework to establish the convergence of $\DGD$ suggests its broader applicability. Future work will explore its extension to analyze other variants of $\DGD$, such as decentralized $\SGD$ \cite{sundhar2010distributed} and 
decentralized random reshuffling method \cite{huang2023distributed}, in the nonconvex setting.
\subsection{Random Reshuffling}
\label{sec:rr}

The prior work \cite{li2023convergence} has shown that the random reshuffling ($\RR$) method can converge to a stationary point if the objective function satisfies the KL property. Nonetheless, the analysis in \cite{li2023convergence} relies on the ubiquitous bounded iterates assumption. In this section, we leverage our proposed KL-based framework to establish the same iterate convergence result under weaker assumptions. Moreover, our analysis further extends the existing results by providing convergence rates for both function values and the norm of the gradients.

The random reshuffling method performs the following steps in each iteration $k$: Let $\{\pi_1^k, \pi_2^k, \ldots, \pi_n^k\}$ denote a permutation of indices $[n]$ of the component functions and set
\[
x^{k+1} = x^k - \alpha_k\sum_{i=1}^n \nabla f_{\pi_i^k}(x_i^k),\quad \text{where}\quad x_1^k = x^k \quad \text{and} \quad x_{i+1}^k  =x_i^k - \alpha_k\nabla f_{\pi_i^k}(x_i^k).
\]
We now examine the update scheme of $\RR$ and derive key properties that will be instrumental in our subsequent analysis.
\begin{proposition}\label{lemma:rr}
Let \cref{as:func} hold and let $\{x^k\}_{k}$ be generated by $\RR$ with step sizes $\{\alpha_k\}_k$ satisfying 
	\begin{equation}
		\label{eq:rr-step-size-req}
		\alpha_k\in\Big(0,{\frac{1}{{\sqrt{2}\sf L}n}}\Big],\quad {\sum}_{k=1}^\infty \alpha_k=\infty \quad \text{and}\quad {\sum}_{k=1}^\infty \alpha_k^3 <\infty.
	\end{equation}
	Then, there exists $\sG>0$ such that for all $k\geq 1$, it holds that
\begin{enumerate}[label=\textup{(\alph*)},topsep=0ex,itemsep=0ex,partopsep=0ex, leftmargin = 25pt]
\item $f(x^{k+1})   \leq f(x^k) - \frac{n\alpha_k}{2}\|\nabla f(x^k)\|^2 + 2\sG\sL^3 n^3\alpha_k^3$.	
\item $\|x^{k+1} - x^k\| \leq n\alpha_k \|\nabla f(x^k)\| + 2\sG^{\frac12} \sL^{\frac32}  n^2 \alpha_k^2$.
\end{enumerate}
\end{proposition}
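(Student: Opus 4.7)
The strategy is the standard random reshuffling unrolling, augmented by a bootstrap argument to produce the uniform constant $\sG$. First, I would treat the inner iterates $\{x_i^k\}_{i=1}^n$ as $\cO(\alpha_k)$ perturbations of $x^k$ and control them by a discrete Gronwall argument. Telescoping the inner update gives
\[
x_i^k - x^k = -\alpha_k {\sum}_{j=1}^{i-1} \nabla f_{\pi_j^k}(x_j^k),
\]
and $\sL$-smoothness in the form $\|\nabla f_{\pi_j^k}(x_j^k)\| \leq \|\nabla f_{\pi_j^k}(x^k)\| + \sL\|x_j^k - x^k\|$, together with the key step-size bound $\alpha_k \sL n \leq 1/\sqrt{2}$ from \eqref{eq:rr-step-size-req}, leads to a clean estimate of the form
\[
{\sum}_{i=1}^n \|x_i^k - x^k\|^2 \leq C_1 \alpha_k^2 n \cdot {\sum}_{j=1}^n \|\nabla f_j(x^k)\|^2,
\]
for an absolute constant $C_1$. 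I would then invoke the inequality $\|\nabla f_j(x)\|^2 \leq 2\sL(f_j(x) - \bar f)$, a direct consequence of $\sL$-smoothness and boundedness below, to replace the sum of component gradient norms by $2\sL n (f(x^k) - \bar f)$.

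Next, I would apply the descent lemma for $f$ along $x^{k+1} - x^k = -\alpha_k \sum_i \nabla f_{\pi_i^k}(x_i^k)$, using the decomposition
\[
{\sum}_{i=1}^n \nabla f_{\pi_i^k}(x_i^k) = n \nabla f(x^k) + E_k, \qquad E_k := {\sum}_{i=1}^n \bigl[\nabla f_{\pi_i^k}(x_i^k) - \nabla f_{\pi_i^k}(x^k)\bigr],
\]
where $\|E_k\| \leq \sL \sqrt{n} (\sum_i \|x_i^k - x^k\|^2)^{1/2}$ by Cauchy-Schwarz and the Lipschitz gradient property. Combining this with the inner-iterate bound from the previous paragraph yields $\|E_k\|^2 \leq C_2 \sL^3 n^3 \alpha_k^2 (f(x^k) - \bar f)$. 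Young's inequality on the cross term $-\alpha_k \langle \nabla f(x^k), E_k \rangle$ then absorbs half of the dominant $n\alpha_k\|\nabla f(x^k)\|^2$ descent and leaves a remainder of order $\alpha_k^3 \sL^3 n^3 (f(x^k) - \bar f)$, producing
\[
f(x^{k+1}) \leq f(x^k) - \tfrac{n\alpha_k}{2}\|\nabla f(x^k)\|^2 + C_3 \sL^3 n^3 \alpha_k^3 (f(x^k) - \bar f).
\]

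The main obstacle is that this non-descent remainder still carries the a priori unbounded factor $f(x^k) - \bar f$. To close the argument, I would set $u_k := f(x^k) - \bar f \geq 0$, discard the nonpositive gradient term, and observe that the above inequality implies the multiplicative recursion $u_{k+1} \leq (1 + C_3 \sL^3 n^3 \alpha_k^3) u_k$. Since $\sum_k \alpha_k^3 < \infty$ by \eqref{eq:rr-step-size-req}, the infinite product $\prod_k (1 + C_3 \sL^3 n^3 \alpha_k^3)$ converges and hence $\sup_k u_k$ is finite; call this supremum $\sG$ (after adjusting constants). Substituting $f(x^k) - \bar f \leq \sG$ into the displayed descent estimate immediately yields (a). For (b), I would return to $\|x^{k+1} - x^k\| \leq n\alpha_k \|\nabla f(x^k)\| + \alpha_k \|E_k\|$ and apply the Cauchy-Schwarz bound for $\|E_k\|$ together with the now-uniform bound $f(x^k) - \bar f \leq \sG$ and the inner-iterate estimate, collecting constants to obtain the stated form $n\alpha_k \|\nabla f(x^k)\| + 2\sG^{1/2}\sL^{3/2} n^2 \alpha_k^2$. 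The technical heart of the argument is the bootstrap step: the step-size condition $\alpha_k \leq 1/(\sqrt{2}\sL n)$ is precisely what keeps the Gronwall constant under control, while $\sum_k \alpha_k^3 < \infty$ is precisely what makes the multiplicative factors $1 + C_3 \sL^3 n^3 \alpha_k^3$ summable in logarithm.
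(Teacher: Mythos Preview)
Your proposal is correct and follows essentially the same approach as the paper, which defers part (a) to \cite[Lemma 3.2]{li2023convergence} but whose underlying argument is exactly your Gronwall bound on the inner iterates combined with the bootstrap step; in particular the paper's constant $\sG := (f(x^1)-\bar f)\cdot\exp\!\big(\sum_k 2\sL^3 n^3\alpha_k^3\big)$ is precisely the uniform supremum produced by your multiplicative recursion $u_{k+1}\le(1+C\sL^3 n^3\alpha_k^3)u_k$. For (b) the paper likewise bounds $\|x^{k+1}-x^k\|$ by $n\alpha_k\|\nabla f(x^k)\| + \sL\alpha_k\sum_i\|x_i^k-x^k\|$ and plugs in the inner-iterate estimate, matching your treatment of the error term $E_k$.
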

\begin{proof}
The proof is mostly based on \cite{li2023convergence}. Statement (a) restates a previously established result in \cite[Lemma 3.2]{li2023convergence}. Besides, it is inferred from \cite[Eq. (3.3) and (3.5)]{li2023convergence} that
\[
{\sum}_{i=1}^n \|x_i^k - x^k\|^2 \leq 4\sL\sG n^3 \alpha_k^2\quad \text{where} \quad \sG:=(f(x^1)-\bar f)\cdot \exp\big({\sum}_{k=1}^\infty 2\sL^3 n^3 \alpha_k^3\big).
\]
This implies \[\Big({\sum}_{i=1}^n \|x_i^k - x^k\|\Big)^2 \leq n{\sum}_{i=1}^n \|x_i^k - x^k\|^2 \leq 4\sL\sG n^4 \alpha_k^2.\] It follows from the update of $\RR$ and \cref{as:func} that
\begin{align*}
    \|x^{k+1} - x^k\| &= \alpha_k \Big\|{\sum}_{i=1}^n \nabla f_{\pi_i^k}(x_i^k)\Big\| \leq n\alpha_k \|\nabla f(x^k)\| + \alpha_k \Big\|{\sum}_{i=1}^n [\nabla f_{\pi_i^k}(x_i^k) - \nabla f_{\pi_i^k}(x^k)] \Big\|\\
    &\leq n\alpha_k \|\nabla f(x^k)\| + \sL \alpha_k {\sum}_{i=1}^n \|x_i^k - x^k\| \leq n\alpha_k \|\nabla f(x^k)\| + 2\sG^{\frac12} \sL^{\frac32}  n^2 \alpha_k^2.
\end{align*}
Hence, we have completed the proof of \cref{lemma:rr}.
\end{proof}
\cref{lemma:rr} establishes two key properties that align naturally with our framework due to their clear correspondence to conditions \ref{C1}--\ref{C2}. This enables us to directly apply \cref{thm:main} to recover the convergence results in \cite[Proposition 3.3 and Theorem 3.6]{li2023convergence}.
\begin{thm}
\label{thm:rr}
Let \cref{as:func} hold and let $\{x^k\}_{k}$ be generated by $\RR$ with the step sizes $\{\alpha_k\}_k$ satisfying
\begin{equation}
	\label{eq:step-size-rr}
	\alpha_k\in\Big(0,{\frac{1}{\sqrt{2}\sL n}}\Big],\quad {\sum}_{k=1}^{\infty}\alpha_k=\infty,\quad
	\text{and}\quad {\sum}_{k=1}^{\infty}\alpha_k\Big({\sum}_{i=k}^{\infty} \alpha_i^3\Big)^\mu<\infty\quad \text{for some }\mu\in(0,1).
\end{equation}
Then, $\lim_{k\to\infty}\|\nabla f(x^k)\|=0$ and $\{f(x^k)\}_k$ converges to some $f^*\in\R$. 
Moreover, if \cref{as:kl} holds, then $\{x^k\}_{k}$ either converges to some stationary point of $f$ or we have $\|x^k\| \to \infty$.  
\end{thm}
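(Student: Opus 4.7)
The plan is to apply the abstract convergence result \cref{thm:main} directly, using \cref{lemma:rr} to supply the algorithmic conditions \ref{C1} and \ref{C2}, and the step size assumption \eqref{eq:step-size-rr} to supply the parameter conditions \ref{C3} and \ref{C4}. Because \cref{lemma:rr} has already been established in the structurally ``right'' form, the main task is a pattern-matching exercise; there is no real obstacle once the identifications are made.

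First I would set the framework parameters to
\[
I_k = k, \quad \beta_k = \alpha_k, \quad a_1 = \tfrac{n}{2}, \quad p_k = 2\sG \sL^3 n^3 \alpha_k^3, \quad a_2 = n, \quad q_k = 2\sG^{1/2} \sL^{3/2} n^2 \alpha_k^2.
\]
With these choices, \cref{lemma:rr} (a) is exactly \ref{C1} and \cref{lemma:rr} (b) is exactly \ref{C2}, valid for all $k \geq 1$ (so $K=1$). Next I would verify \ref{C3}: the step size bound $\alpha_k \in (0,\tfrac{1}{\sqrt{2}\sL n}]$ gives $\beta_k \leq \bar\beta$, and $\sum_k \alpha_k = \infty$ is assumed. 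For the summability of $\{p_k\}$ and $\{\beta_k^{-1} q_k^2\}$, note that both sequences are proportional to $\alpha_k^3$; indeed $\beta_k^{-1} q_k^2 = 4\sG\sL^3 n^4\alpha_k^3$. Hence each summability condition in \ref{C3} reduces to $\sum_k \alpha_k^3 < \infty$, which follows immediately from \eqref{eq:step-size-rr} by taking $k = 1$ inside the inner tail sum (or directly, since \eqref{eq:step-size-rr} implies $\sum_k \alpha_k^3 < \infty$ once $\mu < 1$ and the outer series converges).

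To verify \ref{C4} I would again exploit that both $p_k$ and $\beta_k^{-1} q_k^2$ are constant multiples of $\alpha_k^3$. Thus both tail sums $\sum_{t \geq k} p_t$ and $\sum_{t \geq k} \beta_t^{-1} q_t^2$ are constant multiples of $\sum_{t \geq k} \alpha_t^3$, and \ref{C4} collapses to
\[
\sum_{k=1}^\infty \alpha_k \Big[\sum_{t=k}^\infty \alpha_t^3\Big]^\mu < \infty,
\]
which is precisely the final condition in \eqref{eq:step-size-rr}. With \ref{C1}--\ref{C4} all in hand, \cref{thm:main} (a) yields $\|\nabla f(x^k)\| \to 0$ and convergence of $\{f(x^k)\}_k$ to some $f^* \in \R$, while \cref{thm:main} (b), together with \cref{as:kl}, yields the dichotomy that either $\{x^k\}_k$ converges to a stationary point of $f$ or $\|x^k\| \to \infty$. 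Since everything reduces to plugging \cref{lemma:rr} into \cref{thm:main}, I do not anticipate any real difficulty; the only subtlety worth spelling out is the ``$p_k, q_k$ both scale like $\alpha_k^3$'' observation that makes \ref{C3} and \ref{C4} collapse to the given step size conditions.
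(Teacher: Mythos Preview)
Your proposal is correct and follows essentially the same approach as the paper: identify the framework parameters exactly as you do, invoke \cref{lemma:rr} for \ref{C1}--\ref{C2}, reduce \ref{C3}--\ref{C4} to the step size condition \eqref{eq:step-size-rr} via the observation that both $p_k$ and $\beta_k^{-1}q_k^2$ are multiples of $\alpha_k^3$, and then apply \cref{thm:main}. The paper's proof is in fact terser than yours and leaves the verification of \ref{C3}--\ref{C4} implicit.
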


\begin{proof}
Utilizing the results in \cref{lemma:rr}, we can verify the conditions \ref{C1}--\ref{C2} by setting 
	\begin{equation*}
		I_k = k, \quad \beta_k = \alpha_k, \quad p_k = 2\sG\sL^3 n^3 \alpha_k^3, \quad q_k= 2\sG^{\frac12} \sL^{\frac32}  n^2 \alpha_k^2, \quad a_1 = \tfrac{n}{2}, \quad \text{and}\quad a_2 =n.
	\end{equation*}
In addition, the step sizes $\{\alpha_k\}_k$ satisfy \ref{C3}--\ref{C4}. \cref{thm:main} (a) implies that $\|\nabla f(x^k)\|\to 0$ and $f(x^k)\to f^*\in\R$ as $k$ tends to infinity.

When \cref{as:kl} holds,  \cref{thm:main} (b) guarantees the convergence result of $\{x^k\}_k$. 
\end{proof}
In \cite[Theorem 3.6]{li2023convergence}, boundedness of $\{x^k\}_k$ is required to achieve convergence. In contrast, \cref{thm:rr} only necessitates the weaker condition that $\acc(\{x^k\}_k) =\{x\in\Rn : \liminf_{k\to\infty}\|x^k-x\|=0\}$, is non-empty. 
Below, we show convergence rates for $\RR$ under polynomial step sizes by directly applying our framework \ref{R1}--\ref{R3}. 
\begin{thm}\label{thm:rr-rate}
	Let \cref{as:kl} and \ref{as:func} hold and let $\{x^k\}_{k}$ be generated by $\RR$ using step sizes $\{\alpha_k\}_k$ of the form $\alpha_k=\alpha/k^\gamma, \alpha>0, \gamma\in(0,1]$. 
	Then, the following statements hold.
\begin{enumerate}[label=\textup{(\alph*)},topsep=0ex,itemsep=0ex,partopsep=0ex, leftmargin = 25pt]
  \item If $\acc(\{x^k\}_k)$ is non-empty and $\gamma\in(\frac12,1)$, then $x^k \to x^*\in\crit(f)$ and $f(x^k)\to f(x^*) = f^*$ and we have
	 \[
	  |f(x^k)-f^*| = \cO(k^{-\psi(\theta,\gamma)}), \quad \|\nabla f(x^k)\|^2 = \cO(k^{-\psi(\theta,\gamma)}),\quad   \|x^k - x^*\| = \cO(k^{-\varphi(\theta,\gamma)}),\quad  \gamma\in(\tfrac{1}{2},1),
   \]
   where $\theta$ denotes the KL-exponent of $f$ at $x^*$ and
   \[\psi(\theta,\gamma):=	\min\{3\gamma-1, {\textstyle \frac{1-\gamma}{2\theta-1} } \},\quad \varphi(\theta,\gamma):=	\min\{2\gamma-1, {\textstyle \frac{(1-\gamma)(1-\theta)}{2\theta-1}}\}.
	 \]
Moreover, if $\theta=\half$, $\gamma=1$ and $\alpha > 8/(n\sC^2)$, then it holds that $|f(x^k)-f^*| = \cO(k^{-2})$, $\|\nabla f(x^k)\|^2 = \cO(k^{-2})$, and $\|x^k - x^*\| = \cO(k^{-1})$.
\item If $\{x^k\}_k$ is bounded, then $f(x^k)\to f^*$ and $|f(x^k)-f^*| = \cO(k^{-\psi(\theta,\gamma)})$ and $\|\nabla f(x^k)\|^2 = \cO(k^{-\psi(\theta,\gamma)})$ continue to hold in the case $\gamma\in(\frac{1}{3},\frac{1}{2}]$ with $\theta$ being the uniformized KL-exponent over the set $\acc(\{x^k\}_k)$.
 \end{enumerate}
\end{thm}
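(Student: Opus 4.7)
The plan is to directly invoke the abstract rate result \cref{thm:convergence rate} by verifying the conditions \ref{R1}--\ref{R3} for the $\RR$ iterates, with the identification supplied by \cref{lemma:rr}. The two bounds in \cref{lemma:rr} are already in the exact form of \ref{R1} and \ref{R2}, so essentially no work is needed to set up the framework; the remaining content of the theorem is then a pure parameter-matching exercise.

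First, I would read off from \cref{lemma:rr} that conditions \ref{R1}--\ref{R2} hold with
\[
p = 3, \quad q = 2, \quad b = \tfrac{n}{2}, \quad \tilde b = 2\sG\sL^3 n^3, \quad c = \max\{n,\,2\sG^{1/2}\sL^{3/2} n^2\}.
\]
Note that $p+1 = 4 = 2q$, so the compatibility clause in \ref{R3} is satisfied, and the step-size requirement $\alpha_k = \alpha/k^\gamma$ with $\gamma \in (1/p,1] = (1/3,1]$ covers exactly the ranges of $\gamma$ claimed in the theorem. Also, the boundary condition $\gamma > 2/(p+1) = 1/2$ of \cref{thm:convergence rate} (a) matches the hypothesis $\gamma \in (1/2,1)$ in part (a), while part (b) corresponds precisely to the remaining range $\gamma \in (1/3,1/2]$ handled by \cref{thm:convergence rate} (b).

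For part (a), applying \cref{thm:convergence rate} (a) with the above identification immediately yields $x^k \to x^* \in \crit(f)$, $f(x^k) \to f^*$, and the claimed polynomial rates with the stated exponents $\psi(\theta,\gamma) = \min\{3\gamma-1,\,\tfrac{1-\gamma}{2\theta-1}\}$ and $\varphi(\theta,\gamma) = \min\{2\gamma-1,\,\tfrac{(1-\gamma)(1-\theta)}{2\theta-1}\}$. For the sharp case $\theta = 1/2$, $\gamma = 1$, \cref{thm:convergence rate} requires $\alpha > \tfrac{2(p-1)}{b\sC^2} = \tfrac{2\cdot 2}{(n/2)\sC^2} = \tfrac{8}{n\sC^2}$, which matches the stated threshold on $\alpha$, and the derived rates $\cO(k^{-(p-1)}) = \cO(k^{-2})$ and $\cO(k^{-(p-1)/2}) = \cO(k^{-1})$ coincide with those stated. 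Part (b) follows analogously from \cref{thm:convergence rate} (b), noting that the assumed boundedness of $\{x^k\}_k$ implies boundedness (hence non-emptiness) of $\acc(\{x^k\}_k)$, so that the uniformized KL exponent of \cref{lem:uniform-kl} is well-defined.

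Since everything reduces to a direct application of the abstract framework, there is no genuine obstacle here; the only step requiring care is verifying that the numerical constants $b$ and $p$ translate correctly into the explicit lower bound $\alpha > 8/(n\sC^2)$ in the $\theta = 1/2$ case. All the heavy lifting — extracting the approximate descent and gradient-bounded update structure for $\RR$ despite the per-epoch inner updates — has already been done in \cref{lemma:rr}, and the matching of $(p,q,b,c,\tilde b)$ to the abstract constants is mechanical.
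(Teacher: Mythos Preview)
Your proposal is correct and follows essentially the same approach as the paper: verify \ref{R1}--\ref{R3} with $p=3$, $q=2$, $b=\tfrac{n}{2}$, $\tilde b=2\sG\sL^3 n^3$, $c=\max\{n,2\sG^{1/2}\sL^{3/2}n^2\}$ via \cref{lemma:rr}, then invoke \cref{thm:convergence rate} (a) and (b) directly. Your explicit check that $\tfrac{2(p-1)}{b\sC^2}=\tfrac{8}{n\sC^2}$ is a nice touch that the paper leaves implicit.
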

\begin{proof}
    The recursions in \cref{lemma:rr} and step sizes $\alpha_k=\alpha/k^\gamma$ imply that the conditions \ref{R1}--\ref{R3} hold with
    \[
    p=3,\quad q=2,\quad b=\tfrac{n}{2},\quad \tilde b=2\sG\sL^3n^3\quad \text{and} \quad c=\max\{n,2\sG^{\frac12} \sL^{\frac32}  n^2\}.
    \]
    When $\acc(\{x^k\}_k)$ is non-empty and $\gamma\in(\frac12,1]$, it follows from \cref{thm:convergence rate} (a) that $x^k \to x^*\in\crit(f)$ and $\{f(x^k)\}_k$ converges to $f^*:=f(x^*)$ with the rates 
     \[
	  |f(x^k)-f^*| = \cO(k^{-\psi(\theta,\gamma)}), \quad \|\nabla f(x^k)\|^2 = \cO(k^{-\psi(\theta,\gamma)}),\quad   \|x^k - x^*\| = \cO(k^{-\varphi(\theta,\gamma)}),\quad  \gamma\in(\tfrac{1}{2},1),
   \]
   where $\theta$ denotes the KL-exponent of $f$ at $x^*$ and
   \[\psi(\theta,\gamma):=	\min\{3\gamma-1, {\textstyle \frac{1-\gamma}{2\theta-1} } \},\quad \varphi(\theta,\gamma):=	\min\{2\gamma-1, {\textstyle \frac{(1-\gamma)(1-\theta)}{2\theta-1}}\}.
	 \]
Moreover, if $\theta=\half$, $\gamma=1$ and $\alpha > 8/(n\sC^2)$, then it holds that $|f(x^k)-f^*| = \cO(k^{-2})$, $\|\nabla f(x^k)\|^2 = \cO(k^{-2})$, and $\|x^k - x^*\| = \cO(k^{-1})$.
 Finally, when $\{x^k\}_k$ is bounded, the statement (b) follows directly from \cref{thm:convergence rate} (b). 
\end{proof}
Analogous to our findings for $\DGD$, the rates for $\RR$ in \cref{thm:rr-rate} can be optimized with respect to step size parameter $\gamma$ if the KL exponent $\theta$ is known. Specifically, if we choose $\alpha>8/(n\sC^2)$, $\gamma = \frac{\theta}{3\theta-1}$ and if $\acc(\{x^k\}_k)$ is non-empty, then $f(x^k)\to f^*\in\R$ and $x^k \to x^*\in\crit(f)$ with rates
 \[
\|\nabla f(x^k)\|^2 = \cO(k^{-\frac{1}{3\theta-1}}), \quad |f(x^k)-f^*| = \cO(k^{-\frac{1}{3\theta-1}}) \quad \text{and} \quad \|x^k - x^*\| = \cO(k^{-\frac{1-\theta}{3\theta-1}}).
\] 
Under the bounded iterates assumption, \cite[Theorem 3.10]{li2023convergence} established convergence rates for the iterates $\{x^k\}_k$ for $\RR$. Here, our results in \cref{thm:rr-rate} provide a more comprehensive understanding of $\RR$'s performance by further quantifying convergence rates for gradient norms and function values. 

Our analysis of $\RR$ in this section demonstrates the general effectiveness of our proposed framework in the context of stochastic nonconvex optimization. Given that $\RR$ serves as a fundamental building block for many other stochastic optimization methods, our framework holds potential for broader applications. In the following subsection, we will provide new convergence results for federated averaging method that incorporates the $\RR$ updates. 
\subsection{Federated Averaging}
\label{sec:fed}
The concept of federated learning was first proposed by McMahan et al. \cite{mcmahan2017communication} to train rich data separately by each client without centrally storing it. Moreover, McMahan et al. introduced federated averaging ($\Fed$, cf. \cite[Algorithm 1]{mcmahan2017communication}) which has become an highly influential method in large-scale (learning) problems \cite{he2019central,Li2020On,huang2024distributed,yun2021minibatch}. Most of existing analyses of $\Fed$ assume unbiasedness of the utilized stochastic gradients. However, practical implementations of $\Fed$ are based on \emph{without-replacement} sampling (i.e., shuffling) schemes to generate stochastic gradients\footnote{This can be observed in the 
\href{https://github.com/adap/flower/blob/main/examples/quickstart-pytorch/client.py}{example} provided by the well-known federated learning framework \href{https://flower.ai/}{Flower}.}. This type of sampling does not satisfy the unbiasedness assumption. 

For the sake of brevity, in the following discussions, $\Fed$ will refer to the $\Fed$ algorithm with shuffling. In \cite{malinovsky2023server,mishchenko2022proximal}, it was shown that $\Fed$ converges to a neighborhood of the solution in strongly convex settings and when constant step sizes are used. Under a full device participation and certain deviation bounds and utilizing the (global) Polyak-\L ojasiewicz (PL) condition, \cite{yun2021minibatch} have shown that $\Fed$ can achieve the rate $f(x^k) - f^*=\cO(k^{-2})$. To the best of our knowledge, convergence guarantees of the form $\|\nabla f(x^k)\|\to 0$ and $x^k \to x^*\in \crit(f)$ for $\Fed$ remain unknown in the nonconvex case. In this subsection, we will address this gap by providing such convergence results for $\Fed$ under mild assumptions. In addition, we derive convergence rates for $\{x^k\}_k$.

Our analysis focuses on a simplified setting involving full client participation, an equal number of local updates, and an equal number of component functions across clients. We consider the following formulation of federated learning:
\begin{align*}
	f(x):=\frac1n\sum_{t=1}^{n}f_t(x),\quad \text{where each $f_t$ has finite-sum structure}\quad f_t(x):=\frac{1}{m}\sum_{j=1}^m h^t_{j}(x),
\end{align*}
where $n$ is the number of clients and $m$ is the number of component functions for each client $i$. Here, we work with Lipschitz continuity assumption for each function $h^t_j:\Rn\to\R$ -- analogous to \cref{as:func}.
\begin{assumption}
	\label{as:fed}
	Every component function $h^t_j:\Rn \to \R$, $t\in[n]$,$j\in[m]$, is ${\sf L}$-smooth and bounded from below by $\bar f$.
\end{assumption}
As noted in \cref{as:func-1}, the Lipschitz smoothness provides a useful upper bound for all $t\in[n]$ and $j\in[m]$:
\begin{equation}
	\label{eq:L-smooth-Fed}
	\|\nabla h^t_j(x)\|^2 \leq 2\sL  (h^t_j(x) - \bar f),\quad \forall~x\in\Rn.
\end{equation}
Let $E\in \N_+$ denote the number of local epochs. The main update of $\Fed$ at iteration $k$ is given by:
\begin{center}
    \begin{tabular}{|p{7cm}|c|}
    \hline
    & \\[-1mm]
\centering \underline{\textbf{Each client $t\in[n]$}} &\underline{\textbf{Central server}} \\[1mm]
 $y_{t,k}^{1,1} = x^k$ & \\[1mm]
\textbf{For $i=1,\ldots,E$} &\\
\quad generate permutation $\{\pi^{i,1}_{t},\ldots,\pi^{i,m}_{t}\}$ of $[m]$ &\\[-2mm]
\quad \textbf{For $j=1,\ldots,m$} & $ \begin{aligned}
    x^{k+1} = \frac{1}{n} \sum_{t=1}^n\; x_t^{k+1}
\end{aligned}$\\[-2mm]
\qquad $y_{t,k}^{i,j+1} =  y_{t,k}^{i,j} - \alpha_k \nabla h_{\pi^{i,j}_{t}}(y_{t,k}^{i,j})$ & \\[2mm]
\quad  \textbf{End For} &\\[1mm]
\quad $y_{t,k}^{i+1,1} = y_{t,k}^{i,m+1}$&\\[1mm]
\textbf{End For} &\\
$x_t^{k+1} = y_{t,k}^{E+1,1}$ &\\[1mm]
\hline
\end{tabular}
\end{center}
Based on the update of $\Fed$, we obtain the following algorithmic bounds. The proof can be found in \cref{proof:FedAvg}.
\begin{proposition}
		\label{lemma:FedAvg}
	Assume \cref{as:fed} holds. Let the sequence $\{x^k\}_k$ be generated by $\Fed$ with step sizes $\{\alpha_k\}_k$ satisfying $0<\alpha_k \leq \frac{1}{2mE\sL}$ and $\sum_{k=1}^\infty \alpha_k^3 <\infty$. Then, there exists $\sG>0$ such that for all $k\geq 1$, the following statements hold:
\begin{enumerate}[label=\textup{(\alph*)},topsep=0ex,itemsep=0ex,partopsep=0ex, leftmargin = 25pt]
\item $f(x^{k+1})   \leq f(x^k) - \frac{Em\alpha_k}{2}\|\nabla f(x^k)\|^2 + \sG  \alpha_k^3$	
\item $\|x^k-x^{k+1}\| \leq  Em \cdot\alpha_k \|\nabla f(x^k)\| + \sqrt{2m\sG E} \cdot \alpha_k^2$
\end{enumerate}
\end{proposition}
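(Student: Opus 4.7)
My approach would mirror the random-reshuffling strategy of \cref{lemma:rr}, since each client essentially runs $E$ consecutive $\RR$-type epochs on its local $m$ component functions before the server aggregates. The key observation is that, to leading order, each client's trajectory advances by $-Em\alpha_k \nabla f_t(x^k)$, so the averaged update mimics a full gradient step on $f$ with effective step size $Em\alpha_k$, plus a higher-order perturbation of size $\cO(\alpha_k^2)$.

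The crucial first step is controlling the local drift $\|y_{t,k}^{i,j} - x^k\|$ uniformly over clients $t$, local epochs $i$, and within-epoch indices $j$. I would adapt the bootstrap argument in \cite[Lemma 3.2]{li2023convergence}: inductively assume $f(x^k) - \bar f \leq M$ for some constant $M$ to be determined. Since $h_j^t \geq \bar f$ for every $t,j$, we obtain $h_j^t(x^k) - \bar f \leq mn (f(x^k) - \bar f) \leq mn M$, and hence by \eqref{eq:L-smooth-Fed}, $\|\nabla h_j^t(x^k)\|^2 \leq 2\sL mn M$. Iterating the local update, using $\sL$-smoothness to compare $\nabla h_j^t(y_{t,k}^{i,j})$ with $\nabla h_j^t(x^k)$, and invoking $\alpha_k \leq 1/(2mE\sL)$ to keep the iterates close to $x^k$, yields a telescoping bound of the form $\|y_{t,k}^{i,j} - x^k\|^2 \leq \cO(m^2 E^2 \alpha_k^2 \cdot M)$. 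The induction on $M$ is closed using the summability $\sum_k \alpha_k^3 < \infty$, which keeps $f(x^k) - \bar f$ uniformly bounded and produces the constant $\sG$; the exponential-in-$\sum \alpha_k^3$ structure parallels the factor $\exp(\sum_k 2\sL^3 n^3 \alpha_k^3)$ appearing in the proof of \cref{lemma:rr}.

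With the drift bound in hand, I would decompose
\[
x_t^{k+1} - x^k = -\alpha_k \sum_{i=1}^E \sum_{j=1}^m \nabla h_{\pi^{i,j}_t}(y_{t,k}^{i,j}) = -Em\alpha_k \nabla f_t(x^k) - \alpha_k \sum_{i,j}\bigl[\nabla h_{\pi^{i,j}_t}(y_{t,k}^{i,j}) - \nabla h_{\pi^{i,j}_t}(x^k)\bigr],
\]
noting that $\sum_j \nabla h_{\pi^{i,j}_t}(x^k) = m \nabla f_t(x^k)$ since $\pi^{i,\cdot}_t$ permutes $[m]$. The residual sum is absorbed into $\cO(\alpha_k^2)$ via $\sL$-smoothness and the drift estimate. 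Averaging over $t$ gives $x^{k+1} - x^k = -Em\alpha_k \nabla f(x^k) + \cO(\alpha_k^2)$, from which statement (b) follows directly by the triangle inequality. For (a), applying $\sL$-smoothness of $f$ gives $f(x^{k+1}) \leq f(x^k) + \langle \nabla f(x^k), x^{k+1} - x^k\rangle + \tfrac{\sL}{2}\|x^{k+1} - x^k\|^2$; substituting the decomposition produces the leading $-Em\alpha_k \|\nabla f(x^k)\|^2$ term, while Young's inequality together with $\alpha_k \leq 1/(2mE\sL)$ absorbs half of this descent to dominate the cross-terms with the $\cO(\alpha_k^2)$ error and the Lipschitz quadratic.

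The main obstacle is managing the two-level nested indexing ($E$ outer epochs, each with $m$ reshuffled inner steps) in the local drift accounting. The bootstrap must be threaded carefully through the nested loops to avoid picking up spurious factors of $E$ or $m$, so that the exact $Em$ coefficient on $\|\nabla f(x^k)\|$ in (b) emerges and the constants $\sG$ in (a) and $\sqrt{2m\sG E}$ in (b) come out consistent; in particular the $\sqrt{mE}$ weighting in (b) is what one gets from $(\sum_{i,j} \|y_{t,k}^{i,j} - x^k\|^2)^{1/2}$ via Cauchy-Schwarz across the $mE$ inner steps.
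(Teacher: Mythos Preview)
Your proposal is correct and follows essentially the same route as the paper. The paper likewise bounds the local drift $\|y_{t,k}^{i,j}-x^k\|$ via $\|\nabla h^t_r(x^k)\|\le\sqrt{2mn\sL(f(x^k)-\bar f)}$ together with $\alpha_k\le 1/(2mE\sL)$, decomposes $x^{k+1}-x^k = -Em\alpha_k\nabla f(x^k)+\cO(\alpha_k^2)$, and closes the uniform bound on $f(x^k)-\bar f$ through the summability $\sum_k\alpha_k^3<\infty$; the only cosmetic differences are that the paper phrases the drift control as a self-referential inequality on $V_k:=E\sL m\sum_t\max_{i,j}\|y_{t,k}^{i,j}-x^k\|$ (solved via $E\sL m\alpha_k\le\tfrac12$) rather than an explicit bootstrap, and obtains the $-\tfrac{Em\alpha_k}{2}\|\nabla f(x^k)\|^2$ coefficient via the polarization identity $2\langle a,b\rangle=\|a\|^2+\|b\|^2-\|a-b\|^2$ instead of Young's inequality.
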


Given the similarities of the derived bounds in \cref{lemma:rr} and \ref{lemma:FedAvg}, we can establish analogous results for $\Fed$ by mirroring the verification process for $\RR$.
\begin{thm}
\label{thm:fed}
Let \cref{as:fed} hold and let $\{x^k\}_{k}$ be generated by $\Fed$ with the step sizes $\{\alpha_k\}_k$ satisfying
\begin{equation}
	\label{eq:step-size-fed}
	\alpha_k\in\Big(0,{\frac{1}{2mE\sL}}\Big],\quad {\sum}_{k=1}^{\infty}\alpha_k=\infty,\quad
	\text{and}\quad {\sum}_{k=1}^{\infty}\alpha_k\Big({\sum}_{i=k}^{\infty} \alpha_i^3\Big)^\mu<\infty\quad \text{for some }\mu\in(0,1).
\end{equation}
Then, $\lim_{k\to\infty}\|\nabla f(x^k)\|=0$ and $\{f(x^k)\}_k$ converges to some $f^*\in\R$. 
Moreover, if \cref{as:kl} holds, then either $\{x^k\}_{k}$ converges to some stationary point of $f$ or we have $\|x^k\| \to \infty$.  
\end{thm}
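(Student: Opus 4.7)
The plan is to mirror the proof of \cref{thm:rr} for random reshuffling, since the bounds in \cref{lemma:FedAvg} are structurally identical to those in \cref{lemma:rr}: both yield an approximate descent inequality with a cubic step size remainder, and both yield a gradient-bounded update with a quadratic step size remainder. The whole proof will therefore reduce to plugging the quantities coming from \cref{lemma:FedAvg} into the abstract framework \ref{C1}--\ref{C4} and invoking \cref{thm:main}.

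First I would fix the choice of parameters required by the framework. Using \cref{lemma:FedAvg}, set
\[
I_k = k, \quad \beta_k = \alpha_k, \quad p_k = \sG \alpha_k^3, \quad q_k = \sqrt{2m\sG E}\,\alpha_k^2, \quad a_1 = \tfrac{Em}{2}, \quad a_2 = Em.
\]
With this identification, \cref{lemma:FedAvg} (a) is literally \ref{C1} and \cref{lemma:FedAvg} (b) is literally \ref{C2}. Since each $h^t_j$ is $\sL$-smooth, $f$ is $\sL$-smooth as well, so \cref{as:func-1} is inherited from \cref{as:fed}, which is the hypothesis required by \cref{thm:main}.

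Next I would verify the parameter conditions \ref{C3} and \ref{C4}. From the step size assumption \eqref{eq:step-size-fed}, we have $\alpha_k \leq \tfrac{1}{2mE\sL}$ and $\sum_k \alpha_k = \infty$. The summability of $\{p_k\}_k$ is $\sum_k p_k = \sG \sum_k \alpha_k^3 < \infty$, which follows from the tail condition in \eqref{eq:step-size-fed} (taking $k=1$ already yields $\sum_k \alpha_k^3 < \infty$). Similarly, $\sum_k \beta_k^{-1} q_k^2 = 2m\sG E \sum_k \alpha_k^3 < \infty$, giving \ref{C3}. For \ref{C4}, observe that both $\sum_{t \geq k} p_t$ and $\sum_{t \geq k} \beta_t^{-1} q_t^2$ are scalar multiples of $\sum_{t \geq k} \alpha_t^3$, so the condition $\sum_k \alpha_k (\sum_{t \geq k} \alpha_t^3)^\mu < \infty$ in \eqref{eq:step-size-fed} is exactly what is required.

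Finally I would apply \cref{thm:main} (a) to conclude $\|\nabla f(x^k)\| \to 0$ and convergence of $\{f(x^k)\}_k$ to some $f^* \in \R$; and then, under \cref{as:kl}, apply \cref{thm:main} (b) to obtain the dichotomy that either $\{x^k\}_k$ converges to some stationary point of $f$ or $\|x^k\| \to \infty$. The only genuine technical content is already absorbed into \cref{lemma:FedAvg}, whose proof is deferred to the appendix; the main obstacle in that part is controlling the drift of the local iterates $y_{t,k}^{i,j}$ from the server iterate $x^k$ across $E$ epochs of within-client shuffled updates, and translating that drift bound into the cubic $\alpha_k^3$ error term in the descent inequality. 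Once that lemma is in hand, the theorem itself is a direct and routine application of the abstract framework.
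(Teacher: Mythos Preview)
Your proposal is correct and is exactly the approach the paper takes: the paper does not even spell out a proof of \cref{thm:fed}, but simply remarks that one mirrors the verification process for $\RR$ using \cref{lemma:FedAvg} in place of \cref{lemma:rr}. Your parameter identifications $I_k=k$, $\beta_k=\alpha_k$, $a_1=\tfrac{Em}{2}$, $a_2=Em$, $p_k=\sG\alpha_k^3$, $q_k=\sqrt{2m\sG E}\,\alpha_k^2$ and the subsequent check of \ref{C3}--\ref{C4} match precisely what the paper intends.
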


To our knowledge, the only prior work investigating the asymptotic behavior of $\Fed$ applied to nonconvex objectives is \cite{huang2024distributed}. Under the assumptions of full device participation and unbiased stochastic gradients, Huang et al. have shown $\|\nabla f(x^k)\|\to0$ almost surely  \cite[Theorem 2]{huang2024distributed}. By contrast, \cref{thm:fed} establishes $\|\nabla f(x^k)\|\to0$ for $\Fed$ when shuffling is used and without requiring the ubiquitous unbiasedness assumption \cite{he2019central,huang2024distributed,Li2020On}.

When $f$ satisfies the KL property and the accumulation points set $\acc(\{x^k\}_k)$ is non-empty, \cref{thm:fed} further implies that $\{x^k\}_k$ converges to some stationary point $x^*\in\crit(f)$. Building upon this, we then proceed to derive convergence rates for $\Fed$.  
\begin{thm}\label{thm:fed-rate}
	Let \cref{as:kl} and \ref{as:fed} hold and let $\{x^k\}_{k}$ be generated by $\Fed$ with step sizes $\{\alpha_k\}_k$ of the form $\alpha_k=\alpha/k^\gamma, \alpha>0, \gamma\in(0,1]$. 
	Then, the following statements hold.
 
	 \begin{enumerate}[label=\textup{(\alph*)},topsep=0ex,itemsep=0ex,partopsep=0ex, leftmargin = 25pt]
    \item If $\acc(\{x^k\}_k)$ is non-empty and $\gamma\in(\frac12,1)$, then $x^k \to x^*\in\crit(f)$ and $f(x^k)\to f(x^*) = f^*$ and we have
	 \[
	  |f(x^k)-f^*| = \cO(k^{-\psi(\theta,\gamma)}), \quad \|\nabla f(x^k)\|^2 = \cO(k^{-\psi(\theta,\gamma)}),\quad   \|x^k - x^*\| = \cO(k^{-\varphi(\theta,\gamma)}),\quad  \gamma\in(\tfrac{1}{2},1),
   \]
   where $\theta$ denotes the KL-exponent of $f$ at $x^*$ and
   \[\psi(\theta,\gamma):=	\min\{3\gamma-1, {\textstyle \frac{1-\gamma}{2\theta-1} } \},\quad \varphi(\theta,\gamma):=	\min\{2\gamma-1, {\textstyle \frac{(1-\gamma)(1-\theta)}{2\theta-1}}\}.
	 \]
Moreover, if $\theta=\half$, $\gamma=1$ and $\alpha > 8/(mE\sC^2)$, then it holds that $|f(x^k)-f^*| = \cO(k^{-2})$, $\|\nabla f(x^k)\|^2 = \cO(k^{-2})$, and $\|x^k - x^*\| = \cO(k^{-1})$.
\item If $\{x^k\}_k$ is bounded, then $f(x^k)\to f^*$ and $|f(x^k)-f^*| = \cO(k^{-\psi(\theta,\gamma)})$ and $\|\nabla f(x^k)\|^2 = \cO(k^{-\psi(\theta,\gamma)})$ continue to hold in the case $\gamma\in(\frac{1}{3},\frac{1}{2}]$ with $\theta$ being the uniformized KL-exponent over the set $\acc(\{x^k\}_k)$.
 \end{enumerate}
\end{thm}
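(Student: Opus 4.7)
The plan is to treat this as a direct verification exercise applying the abstract rate result \cref{thm:convergence rate} to the algorithmic bounds already established in \cref{lemma:FedAvg}. The two recursions in \cref{lemma:FedAvg} are structured exactly as the conditions \ref{R1}--\ref{R2} of the specialized framework, so the proof should parallel the verifications carried out for $\DGD$ in \cref{thm:rate-dgd} and for $\RR$ in \cref{thm:rr-rate}; no new analytic work is required.

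Concretely, I would first identify the constants. Setting
\[ p = 3, \quad q = 2, \quad b = \tfrac{Em}{2}, \quad \tilde b = \sG, \quad c = \max\{Em,\; \sqrt{2m\sG E}\}, \]
the inequalities in \cref{lemma:FedAvg} (a) and (b) become precisely \ref{R1} and \ref{R2}. The polynomial step size $\alpha_k = \alpha/k^\gamma$ together with $\gamma \in (\tfrac13, 1]$ yields \ref{R3}, noting that $p+1 = 4 = 2q$ holds with equality so the final requirement of \ref{R3} is automatic.

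For part (a), since $\gamma \in (\tfrac12,1) = (\tfrac{2}{p+1}, 1)$ and $\acc(\{x^k\}_k) \neq \emptyset$, \cref{thm:convergence rate} (a) directly delivers $x^k \to x^* \in \crit(f)$, $f(x^k) \to f^* = f(x^*)$, and the rates described by $\psi(\theta,\gamma)$ and $\varphi(\theta,\gamma)$ with the advertised formulas (which agree with the general $\psi,\varphi$ after substituting $p = 3$). For the special case $\theta = \tfrac12$, $\gamma = 1$, the threshold $\alpha > \tfrac{2(p-1)}{b \sC^2}$ from \cref{thm:convergence rate} (a) becomes $\alpha > \tfrac{4}{(Em/2)\sC^2} = \tfrac{8}{mE \sC^2}$, which matches the stated bound, and the rates $\cO(k^{-2})$ and $\cO(k^{-1})$ follow.

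For part (b), when $\{x^k\}_k$ is bounded the set $\acc(\{x^k\}_k)$ is automatically non-empty and contained in $\crit(f)$ by \cref{thm:main} (a), so \cref{lem:uniform-kl} supplies a uniformized KL exponent $\theta$ over $\acc(\{x^k\}_k)$, and \cref{thm:convergence rate} (b) yields the function value and gradient norm rates for $\gamma \in (\tfrac13, \tfrac12]$. No step of this argument should present difficulty; the only mildly delicate bookkeeping is the arithmetic tying $\alpha > \tfrac{8}{mE\sC^2}$ to the abstract threshold $\alpha > \tfrac{2(p-1)}{b\sC^2}$, which I have already verified above.
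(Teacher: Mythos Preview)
Your proposal is correct and mirrors exactly what the paper does: the paper does not write out a separate proof for \cref{thm:fed-rate} but indicates that one should mimic the verification done for $\RR$ in \cref{thm:rr-rate}, i.e., read off $p=3$, $q=2$, $b=\tfrac{Em}{2}$, $\tilde b=\sG$, $c=\max\{Em,\sqrt{2m\sG E}\}$ from \cref{lemma:FedAvg} and then invoke \cref{thm:convergence rate}. Your bookkeeping (including the threshold $\alpha>\tfrac{2(p-1)}{b\sC^2}=\tfrac{8}{mE\sC^2}$ and the identification $(\tfrac{2}{p+1},1)=(\tfrac12,1)$, $(\tfrac1p,\tfrac{2}{p+1}]=(\tfrac13,\tfrac12]$) is accurate.
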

Similar to $\RR$ and $\DGD$, when $\theta\in[\frac12,1)$ is known, the rates for $\Fed$ in \cref{thm:fed-rate} can be optimized to
 \[
\|\nabla f(x^k)\|^2 = \cO(k^{-\frac{1}{3\theta-1}}), \quad |f(x^k)-f^*| = \cO(k^{-\frac{1}{3\theta-1}}), \quad \text{and} \quad \|x^k - x^*\| = \cO(k^{-\frac{1-\theta}{3\theta-1}}).
\] 
In summary, this subsection establishes iterate convergence and derives the corresponding rates for $\Fed$ under mild Lipschitz smoothness and KL-based assumptions. In particular, when $\theta = \frac12$, \cref{thm:fed-rate} recovers the function rate $f(x^k)-f^* = \cO(k^{-2})$ shown in \cite[Theorem 1]{yun2021minibatch} requiring the more restrictive PL condition. Furthermore, \cref{thm:fed-rate} provides the additional rates $\|\nabla f(x^k)\|^2 = \cO(k^{-2})$ and $\|x^k-x^*\|=\cO(k^{-1})$ in this case.

\section{Conclusion} 
We propose a novel KL-based analysis framework that is applicable to algorithms that do not necessarily possess a sufficient descent property. This framework allows us to study a broader class of optimization methods, such as stochastic and distributed algorithms. Leveraging the framework, we provide new convergence results for decentralized gradient ($\DGD$) and federated averaging ($\Fed$) methods and recover existing results for $\SGD$ and random reshuffling ($\RR$) without requiring an a priori boundedness condition on the iterates.

As an additional by-product, we also provide a stream-lined way to quantify the convergence rates of an algorithm if it and the utilized step sizes fit in a specialized form of our proposed framework. This allows providing new rates for the function values and gradient norms for $\RR$. In addition, this specialized framework facilitates the derivation of convergence rates for the iterates and gradient norms for $\DGD$ and $\Fed$. These results appear to be new -- even if we would work with a significantly more restrictive, global Polyak-{\L}ojasiewicz assumption.   

\bibliographystyle{siam}
\bibliography{references} 
\appendix
\section{Proof of Main Convergence Results} \label{proof:main}
\subsection{Proof of \cref{thm:main} (a)}\label{proof:prop:weak-con}
\begin{proof}
Without loss of generality, we discard all iterates up to the $K$ iterate and relabel the
sequence, and then \ref{C1} and \ref{C2} hold explicitly on the new sequence. Based on \ref{C1}, by defining $u_k:={\sum}_{i=k}^{\infty}\, p_i$, 
	\begin{equation}
		\label{eq:prop-1}
		f(x^{I_{k+1}}) + u_{k+1} \leq f(x^{I_k}) + u_{k} - a_1 \beta_k\|\nabla f(x^{I_k})\|^2.
	\end{equation}
	Let us notice that $\{p_k\}_k$ is summable and, hence, $\{u_k\}_k$ is finite and non-increasing with $u_k\to0$. Next, consider the sequence $\{f(x^{I_k})+u_k\}_k$ which is nonincreasing and bounded from below, so there must be a constant $f^*$ such that $\lim_{k\to\infty}f(x^{I_k})+u_k=f^*$. Since $\lim_{k\to\infty}u_k=0$, we conclude that $\lim_{k\to\infty} f(x^{I_k})=f^*$. Moreover, there exists an upper bound ${\sG}:= f(x^{I_0}) + u_0 -\bar f$ for the sequence $\{f(x^{I_k})-\bar f\}_{k\in\N}$. 
	
	Then, unfolding the recursion \cref{eq:prop-1} by summing over $k=1,\cdots,M$, and letting $M\to\infty$, this yields
	\begin{equation}
		\label{eq:prop-2}
		\infty > a_1^{-1}{\sG} \geq {\sum}_{k=1}^{\infty} \beta_k \|\nabla f(x^{I_k})\|^2 =:{\sum}_{k=1}^{\infty} \beta_k F_k^2.
	\end{equation}
	Notice that $\sum_{k=1}^{\infty}\beta_k=\infty$, a direct result from \cref{eq:prop-2} is $\liminf_{k\to\infty} F_k=0$. However, it still remains to be shown that $\lim\limits_{k\to\infty}F_k=0$.
	Let us assume on the contrary that the sequence $\{F_k\}_k$ does not converge to zero. Then, there exists $\varepsilon>0$ and two infinite subsequences $\{t_j\}$ and $\{\ell_j\}$ such that $t_j < \ell_j < t_{j+1}$,
	\begin{equation}
		\label{eq:prop-3}
		F_{t_j} \geq 2\varepsilon,\quad F_{\ell_j} <\varepsilon,\quad\text{and}\quad \varepsilon \leq F_t<2\varepsilon
	\end{equation}
for all $k=t_j+1,\cdots,\ell_j-1$.
Combing \cref{eq:prop-3} with \cref{eq:prop-2} yields
\[\infty>{\sum}_{k=1}^{\infty}\beta_k F_k^2\geq \varepsilon^2 {\sum}_{j=1}^{\infty}{\sum}_{k=t_j}^{\ell_j-1}\beta_k=:\varepsilon^2 {\sum}_{j=1}^{\infty}\zeta_j,\]
which implies
\begin{equation}
	\label{eq:prop-4}
	\lim\limits_{j\to\infty}\zeta_j=0.
\end{equation}
According to \ref{C2}, we obtain the following inequality for $k \geq 1$, 
\begin{equation}
	\label{eq:prop-4-1}
	\begin{aligned}
		\|x^{I_{k+1}}-x^{I_k}\| &\leq \max_{I_{k} < i  \leq I_{k+1}} \|x^i - x^{I_{t_k}} \| \leq a_2\beta_k \|\nabla f(x^{I_k})\| + q_k \\& \leq a_2\beta_k \sqrt{2{\sL}(f(x^{I_k}) - \bar f)} + q_k \leq a_2\sqrt{2\sL\sG}\beta_k + q_k,
	\end{aligned}
\end{equation}
where the third inequality is due to \eqref{eq:L-smooth}. Then, we apply Cauchy-Schwartz inequality,
\begin{align*}
	\|x^{I_{\ell_j}} - x^{I_{t_j}}\| &\leq {\sum}_{k=t_j}^{\ell_j-1}\sqrt{\beta_k}\left[\frac{\|x^{I_{k+1}}-x^{I_k}\|}{\sqrt{\beta_k}}\right]\leq \sqrt{\zeta_j} \left[{\sum}_{k=t_j}^{\ell_j-1}\beta_k^{-1}\|x^{I_{k+1}}-x^{I_k}\|^2 \right]^{\frac12}\\
	&\leq \sqrt{\zeta_j} \left[{\sum}_{k=t_j}^{\ell_j-1}4a^2_2{\sL \sG}\beta_k + 2\beta_k^{-1}q_k^2 \right]^{\frac12}=\sqrt{\zeta_j} \left[4a^2_2{\sL \sG}\zeta_j +  2{\sum}_{k=t_j}^{\ell_j-1}\beta_k^{-1}q_k^2 \right]^{\frac12}
\end{align*}
On the one hand, upon taking the limit $j\to\infty$ in the above inequality, together with \cref{eq:prop-4} and $\sum_{k=1}^{\infty} \beta_k^{-1}q_k^2 <\infty$ (implied by \ref{C3}), we have
\begin{equation}
	\label{eq:prop-5}
	\lim\limits_{j\to\infty}\|x^{I_{\ell_j}}-x^{I_{t_j}}\|=0.
\end{equation}
On the other hand, combing \cref{eq:prop-3}, the triangle inequality, and the Lipschitz continuity of $\nabla f$, we have
\begin{equation}
	\label{eq:prop-6}
	\varepsilon \leq | F_{\ell_j}-F_{t_j} | \leq \|\nabla f(x^{I_{\ell_j}}) - \nabla f(x^{I_{t_j}})\| \leq {\sL}\| x^{I_{\ell_j}}- x^{I_{t_j}}\|. 
\end{equation}
We reach a contradiction to \cref{eq:prop-5} by taking $j\to\infty$ in \cref{eq:prop-6}. Consequently, we conclude that $\lim\limits_{k\to\infty}\|\nabla f(x^{I_k})\|=0$. To establish $\lim\limits_{k\to\infty}\|\nabla f(x^{k})\|=0$, we need to explore the relation between $\|\nabla f(x^{k})\|$ and $\|\nabla f(x^{I_k})\|$. For any index $j\in\N$, there exists $k$ such that $I_k < j \leq I_{k+1}$. Then, by $\sL$-continuity of $\nabla f$ and the estimate \eqref{eq:prop-4-1}, 
\begin{align*}
	\|\nabla f(x^j)\| &\leq \|\nabla f(x^j) - \nabla f(x^{I_k})\| + \|\nabla f(x^{I_k})\| \leq {\sL}\|x^j - x^{I_k}\| + \|\nabla f(x^{I_k})\| \\ &\leq (1+{\sL}a_2 \beta_k ) \|\nabla f(x^{I_k})\| + {\sL}q_k \leq (1+ {\sL}a_2\bar \beta) \|\nabla f(x^{I_k})\| + {\sL} q_k,
\end{align*}
where the last inequality is due to $\beta_k \leq \bar \beta $. Let us notice that $\|\nabla f(x^{I_k})\|\to0$ and  $q_k\to0$ as $k$ tends to infinity, we have shown that $\lim\limits_{k\to\infty}\|\nabla f(x^{k})\|=0$. Note that $\sum_{k=1}^{\infty} \bar \beta^{-1}q_k^2  < \sum_{k=1}^{\infty} \beta_k^{-1}q_k^2 <\infty$ implies $q_k\to0$. Invoking the $\sL$-continuity of $\nabla f$ (cf. \cite[Lemma 1.2.3]{nesterov2018lectures}), we have
\[
|f(x^i)-f(x^{I_k}) - \langle \nabla f(x^{I_k}),x^i-x^{I_k}\rangle| \leq \frac{\sL}{2} \|x^i-x^{I_k}\|^2,\quad \forall\; i=I_k,I_k+1,\dots,I_{k+1}.
\]
Thanks to the facts  $\max_{I_{k} < i  \leq I_{{k+1}}} \|x^i - x^{I_{k}} \| \leq a_2\beta_k \|\nabla f(x^{I_k})\| + q_k \to 0$ and $\nabla f(x^k)\to0$, we can show $\max_{I_{k} < i  \leq I_{{k+1}}} |f(x^i)-f(x^{I_k}) | \to 0$, which, together with $f(x^{I_k})\to f^*$, establishes $f(x^k) \to f^*$ as $k\to\infty$. 
\end{proof}
\subsection{Proof of \cref{thm:main} (b)}\label{proof:strong-con}
We define the set of accumulation points of the sequence $\{x^{I_k}\}_k$ as
\begin{equation}
	\label{accumulation-point-0}
 \acc(\{x^{I_k}\}_k):=
    \{x\in\Rn : \liminf_{k\to\infty}\|x^{I_k}-x\|=0\}.
\end{equation}
It is inferred from \cref{thm:main} (a) that $\|\nabla f(x^{I_k})\| \to 0$, which indicates that every accumulation point of $\{x^{I_k}\}_k$ is a stationary point. Consequently, $\acc(\{x^{I_k}\}_k) \subseteq\crit(f)$. 

To proceed towards the iterate convergence \cref{thm:main} (b), we introduce a crucial technical result \cref{lem:dk-iter}. This lemma provides a KL-based bound that plays a central role in our subsequent analysis. The proof of \cref{lem:dk-iter} is deferred to \cref{proof:lem:dk-iter} for clarity of presentation.
\begin{lemma} \label{lem:dk-iter}
    Suppose that the conditions stated in \cref{thm:main} (a) hold. If there exists $x^* \in \acc(\{x^{I_k}\}_k) \subseteq\crit(f)$ and $j \in \mathbb{N}_+$ such that $x^{I_j} \in U(x^*)$ and $|f(x^{I_j}) - f(x^*)| < \min\{1, \eta\}$, then
    \begin{equation}
		\label{eq:est-0}
		d_j  \leq  \frac{\sqrt2a_2}{a_1}(\Delta_j-\Delta_{j+1}) + \sqrt2 \sC a_2 \beta_j v_j^{\vartheta},
    \end{equation}
    where $d_j:= \max_{I_j < i \leq I_{j+1}}\|x^{i}-x^{I_j}\|$, $\Delta_j:=\frac{1}{\sC(1-\vartheta)}[f(x^{I_j})- f(x^*) + v_j]^{1-\vartheta}$ and $v_j:= \sum_{i=j}^{\infty}p_i + \frac{a_1}{2a^2_2}\sum_{i=j}^{\infty}\beta_i^{-1}q_i^2$.
\end{lemma}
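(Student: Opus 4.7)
The plan is to recast the ``approximate descent plus error'' content of conditions \ref{C1}--\ref{C2} as a genuine Lyapunov monotonicity for the shifted quantity
\[
V_j \;:=\; f(x^{I_j}) - f(x^*) + v_j, \qquad \Delta_j = \phi(V_j), \quad \phi(s) := \tfrac{1}{\sC(1-\vartheta)}\, s^{1-\vartheta},
\]
and then to combine it with \ref{C2} via a suitable AM-GM step. The tail sequence $v_j$ is tailored so that its telescoping increment $v_j - v_{j+1} = p_j + \tfrac{a_1}{2 a_2^2}\beta_j^{-1} q_j^2$ absorbs the non-descent term $p_j$ from \ref{C1} and creates an additional $\beta_j^{-1} q_j^2$ ``buffer'' that will later be paired against $q_j$ from \ref{C2}; this is the structural reason for the weight $\tfrac{a_1}{2 a_2^2}$ in the definition of $v_j$.

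First I would combine \ref{C1} with this telescoping identity to obtain the Lyapunov decrease
\[
V_j - V_{j+1} \;\geq\; a_1 \beta_j \|\nabla f(x^{I_j})\|^2 + \tfrac{a_1}{2 a_2^2}\beta_j^{-1} q_j^2 \;\geq\; 0.
\]
Next I would run a concavity--KL--subadditivity chain to translate this into a statement about $\Delta_j - \Delta_{j+1}$. Concavity of $\phi$ on $[0,\infty)$ gives $\Delta_j - \Delta_{j+1} \geq (V_j - V_{j+1}) / (\sC V_j^\vartheta)$; the hypotheses $x^{I_j} \in U(x^*)$ and $|f(x^{I_j}) - f(x^*)| < \min\{1,\eta\}$ let \cref{as:kl} apply at $x^*$, yielding $(f(x^{I_j}) - f(x^*))^\vartheta \leq \|\nabla f(x^{I_j})\|/\sC$ (the degenerate case $f(x^{I_j}) = f(x^*)$ being trivial); and subadditivity of $s \mapsto s^\vartheta$ on $[0,\infty)$, valid for $\vartheta \in [\tfrac12, 1) \subset (0,1]$, gives $V_j^\vartheta \leq (f(x^{I_j}) - f(x^*))^\vartheta + v_j^\vartheta \leq \|\nabla f(x^{I_j})\|/\sC + v_j^\vartheta$. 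Chaining the three yields the central inequality
\[
\bigl(\|\nabla f(x^{I_j})\| + \sC v_j^\vartheta\bigr)(\Delta_j - \Delta_{j+1}) \;\geq\; a_1 \beta_j \|\nabla f(x^{I_j})\|^2 + \tfrac{a_1}{2 a_2^2}\beta_j^{-1} q_j^2.
\]

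The last step couples the central inequality with \ref{C2}, which gives $d_j \leq a_2 \beta_j \|\nabla f(x^{I_j})\| + q_j$. Multiplying the central inequality through by $\tfrac{2 a_2^2 \beta_j}{a_1}$ produces $2 a_2^2 \beta_j^2 \|\nabla f(x^{I_j})\|^2 + q_j^2 \leq \tfrac{2 a_2^2 \beta_j}{a_1}\bigl(\|\nabla f(x^{I_j})\| + \sC v_j^\vartheta\bigr)(\Delta_j - \Delta_{j+1})$, and together with the elementary Cauchy--Schwarz bound $d_j^2 \leq 2 (a_2 \beta_j \|\nabla f(x^{I_j})\|)^2 + 2 q_j^2$ this expresses $d_j^2$ as a constant multiple of $\beta_j (\|\nabla f(x^{I_j})\| + \sC v_j^\vartheta)(\Delta_j - \Delta_{j+1})$. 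Splitting the product additively via $\sqrt{u+v} \leq \sqrt{u} + \sqrt{v}$ and then applying Young's inequality $2\sqrt{XY} \leq X/\lambda + \lambda Y$ with weights calibrated so that the $\|\nabla f(x^{I_j})\|$-dependent contribution is re-absorbed into $a_2 \beta_j \|\nabla f(x^{I_j})\|$ on the left (a self-referential AM-GM step that closes the loop) delivers the asserted coefficients $\tfrac{\sqrt{2} a_2}{a_1}$ and $\sqrt{2} \sC a_2$. The main obstacle is precisely this last balancing step: the weight $\tfrac{a_1}{2 a_2^2}$ in $v_j$ is exactly the value that forces the two quadratic contributions on the right-hand side of the central inequality to combine with the Cauchy--Schwarz bound on $d_j^2$ in a ratio which, after the Young step, collapses the residual constants to $\sqrt{2}$ in both terms of the conclusion.
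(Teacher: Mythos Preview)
Your plan is essentially the same as the paper's: define the shifted Lyapunov value $V_j = f(x^{I_j}) - f(x^*) + v_j$, show it is nonincreasing, pass to $\Delta_j = \phi(V_j)$ via concavity of $\phi$, control $V_j^{\vartheta}$ through the KL inequality plus subadditivity of $s\mapsto s^{\vartheta}$, and finish with an AM--GM/Young step in which a $\beta_j\|\nabla f(x^{I_j})\|$ contribution cancels between the two sides. All of this is correct and matches the paper's argument step for step.

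There is one point where your write-up deviates from the paper and becomes imprecise. You keep $q_j^2$ (rather than $d_j^2$) in the Lyapunov decrease $V_j - V_{j+1} \geq a_1\beta_j\|\nabla f(x^{I_j})\|^2 + \tfrac{a_1}{2a_2^2}\beta_j^{-1}q_j^2$, and then in the last paragraph you first pass to a pure bound $d_j^2 \leq C\,\beta_j(\|\nabla f(x^{I_j})\|+\sC v_j^{\vartheta})(\Delta_j-\Delta_{j+1})$ before attempting the ``self-referential'' Young step. But once $d_j^2$ is bounded by the product alone, the left-hand side is just $d_j$: there is no longer a $\beta_j\|\nabla f(x^{I_j})\|$ term on the left for the Young contribution to be ``re-absorbed'' into, so the cancellation you describe cannot occur at that stage, and you will not recover the stated constants $\tfrac{\sqrt{2}a_2}{a_1}$ and $\sqrt{2}\sC a_2$ that way.

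The paper avoids this by spending half of the gradient term \emph{before} the concavity step: from $\beta_j^{-1}d_j^2 \leq 2a_2^2\beta_j\|\nabla f(x^{I_j})\|^2 + 2\beta_j^{-1}q_j^2$ it replaces your decrease by the (weaker) $V_j - V_{j+1} \geq \tfrac{a_1}{2}\beta_j\|\nabla f(x^{I_j})\|^2 + \tfrac{a_1}{4a_2^2}\beta_j^{-1}d_j^2$. After multiplying by $\beta_j/a_1$ and using $(a+b)^2\leq 2a^2+2b^2$ in reverse, the left-hand side dominates $\bigl[\tfrac12\beta_j\|\nabla f(x^{I_j})\| + \tfrac{1}{2\sqrt{2}a_2}d_j\bigr]^2$; taking square roots and applying $\sqrt{ab}\leq \tfrac{a}{2}+\tfrac{b}{2}$ to the right-hand side now produces a $\tfrac12\beta_j\|\nabla f(x^{I_j})\|$ that cancels exactly against the left, leaving precisely $\tfrac{1}{2\sqrt{2}a_2}d_j \leq \tfrac{1}{2a_1}(\Delta_j-\Delta_{j+1}) + \tfrac{\sC}{2}\beta_j v_j^{\vartheta}$. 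Your route can be salvaged the same way: before the square-root/Young step, trade $\tfrac{q_j^2}{2a_2^2}$ for $\tfrac{d_j^2}{4a_2^2}$ at the cost of half of $\beta_j^2\|\nabla f(x^{I_j})\|^2$, so that both $\beta_j\|\nabla f(x^{I_j})\|$ and $d_j$ survive on the left and the cancellation works on $d_j$ rather than on $q_j$.
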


\emph{Proof of \cref{thm:main} (b).}
If $\|x^k\| \nrightarrow \infty$, then $\{x^k\}_k$ has at least one accumulation point $x^*\in\Rn$. Since $\nabla f(x^k)\to0$, we conclude that $x^*\in\crit(f)$. By \cref{as:kl}, the following KL inequality holds at $x^*$, i.e., 
\begin{equation*}
    \|\nabla f(x)\| \geq \sC|f(x)-f(x^*)|^\vartheta,\quad \text{where} \quad \vartheta\in[\theta,1),
\end{equation*}
holds for all $x \in U(x^*) \cap \{x \in \Rn: 0 < |f(x) - f(x^*)| < \min\{1,\eta\}\}$. 
Moreover, from the proof of \cref{thm:main} (a), we have shown that $d_k = \max_{I_k < i \leq I_{k+1}} \|x^i - x^{I_k}\| \to 0$. Hence, there exists a subsequence $\{x^{I_{\ell_k}}\}_k \subseteq \{x^{I_k}\}_k$ converging to $x^*$. Since $f(x^k)\to f^*$ and $f(x^{I_{\ell_k}})\to f(x^*)$ (due to continuity of $f$), we conclude that $f(x^*)=f^*$, $\Delta_k \to 0$ ($v_k \to 0$ by \ref{C3}), and there is $K_f \geq K$ such that 
\begin{equation}
    \label{eq:thm:kl-1}
     |f(x^k) - f(x^*)| < \min\{1,\eta\}\quad \text{for all}\;k \geq K_f.
\end{equation}
\noindent Now, invoking subadditivity of $x^\vartheta$ when $\vartheta \in [0, 1)$ and \ref{C4},
we obtain
\[
    {\sum}_{i=1}^\infty \beta_i v_i^\vartheta \leq
    {\sum}_{i=1}^\infty \beta_i \big({\sum}_{k=i}^\infty p_k\big)^\vartheta + 
    \Big(\frac{a_1}{2a_2^2}\Big)^\vartheta {\sum}_{i=1}^\infty \beta_i \big({\sum}_{k=i}^\infty \beta_k^{-1}q_k^2\big)^\vartheta < \infty.
\]
Hence, combining the above, for any given $\rho>0$ fulfilling $\cB(x^*,\rho) \subseteq U(x^*)$,  there is $t \geq K_f$ such that
\begin{equation}
    \label{eq:thm-kl-rho}
    \|x^{I_{t}} - x^*\| + \frac{\sqrt2 a_2}{a_1}\Delta_{t} + \sqrt2 \sC a_2{\sum}_{i=t}^\infty \beta_i v_i^\vartheta < \rho.
\end{equation}

The main component of this proof is to show that the following statements are true for all $k \geq t$: 
    \begin{enumerate}[label=(\alph*),topsep=4pt,itemsep=0ex,partopsep=0ex, after=\vspace{6pt}]
    \item $x^{I_k} \in \mathcal{B}(x^*,\rho)$ and $|f(x^{I_k}) - f^*| < \min\{1,\eta\}$.
    \vspace{-1mm}
    \item $\sum_{i=t}^{k}d_i  \leq \frac{\sqrt2 a_2}{a_1}(\Delta_{t} - \Delta_{k+1}) + \sqrt2 \sC a_2 {\sum}_{i=t}^k \beta_i v_i^\vartheta.$
    \end{enumerate}
    We prove these statements by induction. Clearly, statements (a) and (b) hold for $k=t$ by \cref{lem:dk-iter}. Let us assume there is $m > t$ such that the statements (a) and (b) are valid for $k=m$. We now turn to $k=m+1$. It is inferred from \eqref{eq:thm:kl-1} that $|f(x^{I_{m+1}}) - f^*| < \min\{1,\eta\}$. We now show that $x^{I_{m+1}}\in \mathcal{B}(x^*,\rho)$. Using triangle inequality and statement (b), we obtain 
	\begin{align*}
		\|x^{I_{m+1}} - x^*\| &\leq \|x^{I_{m+1}} - x^{I_{m}}\| + \|x^{I_{m}} - x^{I_{t}} \| +\|x^{I_{t}} - x^*\| \leq \|x^{I_{t}} - x^*\| +  {\sum}_{i=t}^{m}d_i \\ &\leq \|x^{I_{t}} - x^*\| + \frac{\sqrt2 a_2}{a_1}(\Delta_{t} - \Delta_{k+1}) + \sqrt2 \sC a_2 {\sum}_{i=t}^m \beta_i v_i^\vartheta < \rho,
	\end{align*}
	where the last inequality follows from \eqref{eq:thm-kl-rho} and $\Delta_{k} \geq 0$ for all $k\geq 1$. This accomplishes the statement (a) for $k={m+1}$, implying that $x^{I_{m+1}} \in U(x^*)$ and $|f(x^{I_{m+1}}) - f^*| < \min\{1,\eta\}$. Hence, \Cref{lem:dk-iter} is applicable for $j=m+1$, i.e., we have
	\[d_{m+1}  \leq  \frac{\sqrt2a_2}{a_1}(\Delta_{m+1}-\Delta_{m+2}) + \sqrt2 \sC a_2 \beta_{m+1} v_{m+1}^{\vartheta}.\]
	Combining this inequality with the bound (when $k=m$) in (b) yields
	 \[{\sum}_{i=t}^{m+1}d_i  \leq \frac{\sqrt2 a_2}{a_1}(\Delta_{t} - \Delta_{m+2}) + \sqrt2 \sC a_2 {\sum}_{i=t}^{m+1} \beta_i v_i^\vartheta, \]
	 which indicates that (b) is also valid for $k=m+1$. 
	 Therefore, we show the statements (a) and (b) are valid for all $k \geq t$. It then follows from (b) and \eqref{eq:thm-kl-rho} that
	 \[
	 {\sum}_{i=t}^{\infty}d_i  \leq \frac{\sqrt2 a_2}{a_1}\Delta_{t} + \sqrt2 \sC a_2 {\sum}_{i=t}^\infty \beta_i v_i^\vartheta < \rho <\infty.
	 \]
  Then, for any given $\varepsilon>0$, recall that $d_k=\max_{I_k < i \leq I_{k+1}}\|x^{i}-x^{I_k}\|$, there exists an integer $k_1\geq t$ such that
	\begin{equation}\label{eq:est-6}
		{\sum}_{k=k_1}^{\infty}\|x^{I_{k+1}}-x^{I_k}\| <\frac{\varepsilon}{3}\quad \text{and}\quad \max_{I_k < i \leq I_{k+1}}\|x^{i}-x^{I_k}\| < \frac{\varepsilon}{3}\quad \text{for all }k\geq k_1.
	\end{equation} 
 Hence, for arbitrary integers $m,n$ satisfying $I_{k_1}<m<n$, there exist $k_2$ and $k_3$ (potentially fulfilling $k_1\leq k_2 \leq k_3$) such that $I_{k_2} < m \leq I_{k_2+1}$, $I_{k_3} < n \leq I_{k_3+1}$,  and thus
	\begin{align*}
		\|x^m-x^n\| \leq \|x^m-x^{I_{k_2}}\| + \|x^n-x^{I_{k_3}}\| + {\sum}_{k=k_2}^{k_3}\|x^{I_{k+1}}-x^{I_k}\| <  \frac{\varepsilon}{3} + \frac{\varepsilon}{3} +\frac{\varepsilon}{3} =\varepsilon.
	\end{align*}
 Hence, the sequence $\{x^k\}_k$ is Cauchy, which, together with \cref{thm:main} (a), implies $\{x^k\}_k$ converges to some stationary point of the objective function $f$. \hfill \qedsymbol{}

\subsection{Proof of \cref{lem:dk-iter}} \label{proof:lem:dk-iter}
\begin{proof}
    Taking square on both sides of \ref{C2}, invoking the inequality $(a+b)^2\leq 2a^2 + 2b^2$, and multiplying $\beta_k^{-1}$ yield
	\[\beta_k^{-1}d_k^2 \leq 2a^2_2\beta_k \|\nabla f(x^{I_k})\|^2 + 2 \beta_k^{-1}q_k^2.\]
	Using the former relation, we further rewrite \ref{C1} to create a suitable estimate that copes with the KL inequality-based analysis, i.e., 
	\begin{align*}
		f({x}^{I_{k+1}}) &\leq f({x}^{I_k}) + p_k -a_1\beta_k\|\nabla f(x^{I_k})\|^2 \\&\leq f({x}^{I_k}) + p_k - \frac{a_1\beta_k}{2}\|\nabla f(x^{I_k})\|^2 - \frac{a_1d_k^2}{4a^2_2\beta_k} + \frac{a_1q_k^2}{2a^2_2\beta_k}.
	\end{align*}
    Now, notice that the sequence $\{v_k\}_k$ is well defined thanks to \ref{C4}. Then,
	\begin{equation}
		\label{eq:est-1}
	f({x}^{I_{k+1}}) + v_{k+1} \leq f({x}^{I_k}) + v_k - \frac{a_1}{2}\beta_k\|\nabla f(x^{I_k})\|^2 - \frac{a_1}{4a^2_2}\beta_k^{-1}d_k^2.
	\end{equation}
    The inequality \cref{eq:est-1} infers that the sequence $\{f(x^{I_k})+v_k\}_k$ is non-increasing. In addition, $f(x^{I_k})\to f^*$ for some $f^*\in\R$ owing to $v_k\to0$ (implied by \ref{C3}). Since $x^* \in \acc(\{x^{I_k}\}_k)$, by continuity of $f$, we have $f^* = f(x^*)$.
    Due to $x^{I_j} \in U(x^*)$ and $|f(x^{I_j}) - f(x^*)| < \min\{1, \eta\}$, by the KL property, it holds that for all $\vartheta \in[\theta,1)$, 
    \begin{equation} \label{eq:kl-key-eq1}
         \|\nabla f(x) \| \geq \sC |f(x) - f(x^*)|^\theta \geq \sC |f(x) - f(x^*)|^{\vartheta}.
    \end{equation}
    Let us define $\varrho(s):=\frac{1}{\sC (1-\vartheta)}s^{1-\vartheta}$ (so $[\varrho'(s)]^{-1} = \sC s^\vartheta$) and the sequence
    \begin{equation}
		\label{eq:def-klfun}
		\Delta_k:=\varrho(f(x^{I_k})- f(x^*) + v_k).
    \end{equation}
    Note that $\Delta_k$ is well-defined since $f(x^{I_k}) + v_k\downarrow f(x^*)$. Based on \eqref{eq:kl-key-eq1} and $\vartheta \in [0, 1)$, we have
    \begin{equation}
		\label{eq:est-2}
		\begin{aligned}
			\Delta_j - \Delta_{j+1} &\geq \varrho^\prime(f(x^{I_j}) - f^* + v_j)\left[f(x^{I_j})+v_j - f(x^{I_{j+1}}) - v_{j+1} \right]\\
			& \geq \varrho^\prime(|f(x^{I_j}) - f^*| + v_j)\left[f(x^{I_j})+v_j - f(x^{I_{j+1}}) - v_{j+1} \right]\\
			&\geq \varrho^\prime(|f(x^{I_j}) - f^*| + v_j)\left[ \frac{a_1\beta_j}{2}\|\nabla f(x^{I_j})\|^2 + \frac{a_1d_j^2}{4a^2_2\beta_j} \right] \\&\geq \frac{1}{[\varrho^\prime(|f(x^{I_j}) - f^*|)]^{-1} + [\varrho^\prime(v_j)]^{-1} } \left[\frac{a_1\beta_j}{2}\|\nabla f(x^{I_j})\|^2 + \frac{a_1d_j^2}{4a_2^2\beta_j}\right]  ,
		\end{aligned}
    \end{equation}
    where the first inequality uses the concavity of $\varrho$, the second inequality uses non-increasing property of $\varrho'$, while the third inequality holds due to \cref{eq:est-1} and the last inequality uses $(x+y)^{\vartheta} \leq x^{\vartheta} + y^{\vartheta}$ for all $x,y\geq0 $ and $\vartheta\in[0,1)$.  Rearranging \eqref{eq:est-2} and applying \eqref{eq:kl-key-eq1} yield
    \[
	\frac{a_1\beta_j}{2}\|\nabla f(x^{I_j})\|^2 + \frac{a_1d_j^2}{4a^2_2\beta_j} \leq (\Delta_j-\Delta_{j+1}) [\|\nabla f(x^{I_j})\| + [\varrho^\prime(v_j)]^{-1}] = (\Delta_j-\Delta_{j+1}) [\|\nabla f(x^{I_j})\| + \sC v_j^{\vartheta}].
    \]
    Multiplying both sides by $a_1^{-1}\beta_j$ and using the inequality $(a+b)^2 \leq 2a^2 + 2b^2$, we have
    \[
    \left[\beta_j\|\nabla f(x^{I_j})\|/2 + a_2^{-1}d_j/(2\sqrt2)\right]^2
    \leq a_1^{-1}(\Delta_j - \Delta_{j+1}) [\beta_j\|\nabla f(x^{I_j})\| + \sC\beta_j v_j^{\vartheta}].
    \]
    Taking the square root of the former estimate and utilizing estimate $\sqrt{ab}\leq \frac{1}{2}a + \frac{1}{2} b$, we obtain
    \begin{align*}
	\frac{1}{2}\beta_j\|\nabla f(x^{I_j})\| + \frac{1}{2\sqrt2 a_2}d_j
	&\leq \sqrt{a_1^{-1} (\Delta_j - \Delta_{j+1})[\beta_j\|\nabla f(x^{I_j})\| + \sC\beta_j v_j^{\vartheta}]}\\
	&\leq\frac{1}{2a_1}(\Delta_j-\Delta_{j+1}) + \frac12\beta_j\|\nabla f(x^{I_j})\| + \frac{\sC\beta_j v_j^{\vartheta}}{2},\quad \forall k\geq k_0.
    \end{align*}
    Rearranging the above inequality yields the desired result.
\end{proof}
\section{Derivations of Convergence Rates} \label{app:proof-rates}
\subsection{Preparatory Tools}
We now state two results that have been shown in  \cite[Lemma 4 and 5]{Pol87}, which are crucial for convergence rate analysis. 
\begin{lemma} \label{lemma:rate} Let $\{y_k\}_{k}$ be a non-negative sequence and let $\varsigma \geq 0$, $d, p, q > 0$, $s \in (0,1)$, and $t > s$ be given constants. 
\begin{enumerate}[label=\textup{(\alph*)},topsep=0pt,itemsep=0ex,partopsep=0ex, leftmargin = 25pt]
		\item Suppose that the sequence $\{y_k\}_{k}$ satisfies 
		\[ 
		y_{k+1} \leq \Big(1- \frac{q}{k+\varsigma} \Big) y_k + \frac{d}{(k+\varsigma)^{p+1}}, \quad \forall \ k\geq 1.
		\]
		Then, if $q > p$, it holds that $ y_k \leq \frac{d}{q-p}\cdot (k+\varsigma)^{-p} + o((k+\varsigma)^{-p})$ (as $k \to \infty$). 
		\item Suppose that $\{y_k\}_{k}$ satisfies the recursion
		\[ y_{k+1} \leq \Big(1- \frac{q}{(k+\varsigma)^s} \Big) y_k + \frac{d}{(k+\varsigma)^{t}}, \quad \forall~k \geq 1. \]
		Then, it follows $y_k \leq \frac{d}{q} \cdot (k+\varsigma)^{s-t} + o((k+\varsigma)^{s-t})$ (as $k \to \infty$).
	\end{enumerate}
\end{lemma}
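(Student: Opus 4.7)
The plan is to prove both parts by the ansatz-and-induction method that is standard for this type of Polyak-style recursion. For part (a), I fix an arbitrary $c > d/(q-p)$ and aim to show that $y_k \leq c \cdot (k+\varsigma)^{-p}$ for all sufficiently large $k$; since $c$ can be chosen arbitrarily close to $d/(q-p)$, this yields the asymptotic bound $y_k \leq \frac{d}{q-p}(k+\varsigma)^{-p} + o((k+\varsigma)^{-p})$. The induction base follows from the fact that $(k+\varsigma)^p y_k$ stays uniformly bounded for small $k$ (so by enlarging $c$ if needed, the bound holds at the starting index). For the inductive step, assuming $y_k \leq c/(k+\varsigma)^p$, I plug into the recursion to get
\[
y_{k+1} \leq \frac{c}{(k+\varsigma)^p} - \frac{cq-d}{(k+\varsigma)^{p+1}},
\]
and I must verify that the right-hand side is bounded by $c/(k+1+\varsigma)^p$. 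Using the Taylor expansion $(k+1+\varsigma)^{-p} = (k+\varsigma)^{-p}\bigl(1 - \frac{p}{k+\varsigma} + O((k+\varsigma)^{-2})\bigr)$, this reduces to checking $cq - d \geq cp + O((k+\varsigma)^{-1})$, which holds for all $k$ large enough because $c(q-p) > d$.

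Part (b) proceeds by the same template with the ansatz $y_k \leq c/(k+\varsigma)^{t-s}$ for some $c > d/q$. Induction step: from $y_k \leq c(k+\varsigma)^{-(t-s)}$ the recursion yields
\[
y_{k+1} \leq \frac{c}{(k+\varsigma)^{t-s}} - \frac{cq - d}{(k+\varsigma)^t},
\]
and the task is to dominate this by $c/(k+1+\varsigma)^{t-s}$. A first-order expansion gives $c/(k+\varsigma)^{t-s} - c/(k+1+\varsigma)^{t-s} = c(t-s)(k+\varsigma)^{-(t-s+1)} + O((k+\varsigma)^{-(t-s+2)})$, so the required inequality becomes, after multiplying by $(k+\varsigma)^t$,
\[
c(t-s) \cdot (k+\varsigma)^{s-1} + O((k+\varsigma)^{s-2}) \leq cq - d.
\]
Since $s < 1$ the left-hand side tends to $0$, so the inequality holds for all sufficiently large $k$ once $cq > d$, completing the induction.

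The main technical step — and the only place any care is needed — is the Taylor expansion of $(k+1+\varsigma)^{-p}$ (respectively $(k+1+\varsigma)^{-(t-s)}$) and matching the two leading terms of the recursion against it; in particular, the strict inequalities $q > p$ in (a) and $s < 1$ in (b) are exactly what provides the slack needed to absorb the $O$-remainders uniformly in $k$. A clean way to package this is to choose, at the start of each induction, a threshold $k_0 = k_0(c, p, q, \varsigma, d)$ (resp.\ $k_0(c, s, t, q, d, \varsigma)$) large enough that the remainder in the Taylor expansion is dominated by the gap $c(q-p) - d$ (resp.\ $cq - d$), and then note that by increasing $c$ further we can ensure the ansatz holds at $k = k_0$. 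Letting $c \downarrow d/(q-p)$ (respectively $c \downarrow d/q$) and using the $\limsup$ formulation then delivers the claimed asymptotic form $y_k \leq \frac{d}{q-p}(k+\varsigma)^{-p} + o((k+\varsigma)^{-p})$ and $y_k \leq \frac{d}{q}(k+\varsigma)^{s-t} + o((k+\varsigma)^{s-t})$. Since the lemma is attributed to Polyak \cite{Pol87}, I would either reproduce the argument in this compact form or simply cite the reference.
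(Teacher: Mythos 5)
The paper itself does not prove this lemma at all --- it imports both parts directly from \cite[Lemma 4 and 5]{Pol87} --- so the only question is whether your argument is self-contained and correct. Your inductive (``trapping'') step is fine: with the ansatz $y_k \leq c(k+\varsigma)^{-p}$ and $c(q-p)>d$, comparing the discretization term $cp(k+\varsigma)^{-(p+1)}$ (from expanding $(k+1+\varsigma)^{-p}$, or more cleanly from the mean value theorem, which removes the remainder entirely) against the drift $ (cq-d)(k+\varsigma)^{-(p+1)}$ does close the induction for all large $k$; the analogous computation in (b) with $cq>d$ and $s<1$ is also correct.

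The gap is in how you launch the induction and then let $c\downarrow d/(q-p)$. You handle the base case ``by increasing $c$ further'', but the enlarged constant $C=\max\{c,(k_0+\varsigma)^p y_{k_0}\}$ is what the induction actually propagates, and $C$ does not tend to $d/(q-p)$ as $c$ does; these two operations are incompatible. As written, your argument only yields $y_k=\mathcal{O}((k+\varsigma)^{-p})$ with an uncontrolled constant, not the stated leading coefficient. What is missing is the ``attraction'' half of the Polyak--Chung argument: setting $z_k:=(k+\varsigma)^p y_k$, the recursion gives $z_{k+1}\le\bigl(1-\tfrac{q-p+o(1)}{k+\varsigma}\bigr)z_k+\tfrac{d+o(1)}{k+\varsigma}$, so if $z_k\ge c>d/(q-p)$ on an entire tail, then $z_{k+1}\le z_k-\delta/(k+\varsigma)$ for some $\delta>0$ there, and divergence of $\sum_k(k+\varsigma)^{-1}$ would force $z_k\to-\infty$, contradicting $z_k\ge 0$. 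Hence $z_k$ eventually drops below $c$, at which point your trapping induction with the \emph{original} $c$ takes over and confines it; only then does $c\downarrow d/(q-p)$ deliver the claimed asymptotic form. The same repair is needed in (b), where divergence of $\sum_k(k+\varsigma)^{-s}$ (using $s<1$) plays the role of the harmonic series. With this one additional step your proof is complete; without it the sharp constants asserted in the lemma are not established (though the order of magnitude, which is all the paper actually uses downstream, is).
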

\begin{lemma}\label{lemma:step size}
	Let $\theta \in [0,1)$ be given and let $\{\alpha_k\}_{k}$ be defined via
	\[
	\alpha_k = \frac{\alpha}{(k+\varsigma)^\gamma}, \quad \alpha > 0, \quad \varsigma \geq 0, \quad \gamma \in (1/p,1]. 
	\]
\begin{enumerate}[label=\textup{(\alph*)},topsep=0pt,itemsep=0ex,partopsep=0ex, leftmargin = 25pt]
\item For all $k \geq 1$, we have 
\begin{equation}
	\label{eq:esti-u}
	{\sum}_{j=k}^\infty\alpha_j^p \leq  \frac{a_\gamma}{(k+\varsigma)^{p\gamma-1}} \quad \text{and} \quad \alpha_k \left[{\sum}_{j=k}^\infty\alpha_j^p\right]^{2\theta}\leq \frac{a_\gamma}{(k+\varsigma)^{(1+2p\theta)\gamma-2\theta}},
\end{equation}
where $a_\gamma>0$ is a numerical constant depending on $\alpha$ and $\gamma$.
	\item Moreover, if $\gamma >\frac{1+\theta}{1+p\theta}$, then it holds that 
	\begin{equation}\label{eq:esti-eps} 
	{\sum}_{t=k}^\infty \alpha_t \left[{\sum}_{j=t}^\infty\alpha_j^p\right]^{\theta} \leq \frac{{a}_\theta}{(k+\varsigma)^{(1+p\theta)\gamma-(1+\theta)}} \quad \forall~k \geq 1, 
	\end{equation}
	where ${a}_\theta>0$ is a numerical constant (depending on $\alpha, \gamma, \theta$).
\end{enumerate}	
\end{lemma}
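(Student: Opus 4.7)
The plan is to prove both parts by reducing the series to standard $p$-series via the integral test, which works because the hypothesis $\gamma>1/p$ ensures $p\gamma>1$ (so $\sum_j \alpha_j^p$ converges), and the extra hypothesis in part (b) is exactly the threshold that makes the outer series converge as well.

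For part (a), I would start by writing $\sum_{j=k}^\infty \alpha_j^p = \alpha^p \sum_{j=k}^\infty (j+\varsigma)^{-p\gamma}$ and bounding the tail by $(k+\varsigma)^{-p\gamma}+\int_k^\infty (t+\varsigma)^{-p\gamma}\,dt$. Since $p\gamma>1$, the integral evaluates to $\tfrac{1}{p\gamma-1}(k+\varsigma)^{1-p\gamma}$, which dominates the isolated first term and yields the first estimate with $a_\gamma := \alpha^p(1 + \tfrac{1}{p\gamma-1})$. The second estimate in (a) is then just a direct substitution: plug the first estimate into $\alpha_k[\sum_{j\geq k}\alpha_j^p]^{2\theta}$ and collect exponents,
\begin{equation*}
\alpha_k \Bigl[{\sum}_{j=k}^\infty \alpha_j^p\Bigr]^{2\theta}
\leq \alpha\cdot a_\gamma^{2\theta}\,(k+\varsigma)^{-\gamma}\,(k+\varsigma)^{-2\theta(p\gamma-1)}
= \alpha\,a_\gamma^{2\theta}\,(k+\varsigma)^{-(1+2p\theta)\gamma+2\theta},
\end{equation*}
which (after absorbing the extra $\alpha$ into the constant) is exactly the stated bound.

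For part (b), I would insert the first bound from (a) to get
\begin{equation*}
{\sum}_{t=k}^\infty \alpha_t\Bigl[{\sum}_{j=t}^\infty \alpha_j^p\Bigr]^{\theta}
\leq \alpha\,a_\gamma^\theta \,{\sum}_{t=k}^\infty (t+\varsigma)^{-\gamma-\theta(p\gamma-1)}.
\end{equation*}
The exponent $\gamma+\theta(p\gamma-1)=\gamma(1+p\theta)-\theta$ is strictly greater than $1$ iff $\gamma(1+p\theta)>1+\theta$, i.e., iff $\gamma>\tfrac{1+\theta}{1+p\theta}$, which is the assumed hypothesis; hence a second application of the integral test gives $O((k+\varsigma)^{1-\gamma(1+p\theta)+\theta})=O((k+\varsigma)^{-(1+p\theta)\gamma+(1+\theta)})$, delivering the claimed bound with $a_\theta := \alpha\,a_\gamma^\theta\bigl(1+\tfrac{1}{\gamma(1+p\theta)-(1+\theta)}\bigr)$.

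There is no genuine obstacle here; the only mildly delicate point is to check the boundary case $\gamma=1$ (in which $p\gamma-1 = p-1>0$, so the integral-test computation still applies without modification) and to handle the first term of each tail correctly (adding the $(k+\varsigma)^{-p\gamma}$ or $(k+\varsigma)^{-\gamma-\theta(p\gamma-1)}$ term separately before invoking the integral comparison, so the inequality is valid for every $k\geq 1$ rather than only asymptotically). Both constants $a_\gamma$ and $a_\theta$ depend only on $\alpha,\gamma,\theta,p$ as required.
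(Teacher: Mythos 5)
Your proof is correct. Part (a) follows essentially the same route as the paper: tail-of-series bounded by first term plus integral, using $p\gamma>1$, then direct substitution for the second estimate (the paper additionally notes $\alpha^{1+2p\theta}\leq(1+\alpha)^{1+2p}$ so that the constant can be taken independent of $\theta$; your $\alpha\,a_\gamma^{2\theta}$ needs the analogous remark, e.g. bounding it by $\alpha\max\{1,a_\gamma^{2}\}$, but this is cosmetic). For part (b) the paper does not give an argument at all --- it simply cites \cite[Lemma 3.7]{li2023convergence} --- whereas you supply a direct, self-contained proof by inserting the bound from (a) and applying the integral test a second time. Your identification of the threshold $\gamma(1+p\theta)-\theta>1\iff\gamma>\tfrac{1+\theta}{1+p\theta}$ is exactly right, the resulting exponent $1-\gamma(1+p\theta)+\theta=-\bigl((1+p\theta)\gamma-(1+\theta)\bigr)$ matches the claim, and your handling of the leading tail term (valid for all $k\geq1$ since $k+\varsigma\geq1$) is sound; this buys the reader a complete in-paper argument at essentially no extra cost.
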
 
\begin{proof}
Using the integral test and noting that $p\gamma>1$, we obtain 
\[
{\sum}_{j=k}^\infty\alpha_j^p = {\sum}_{j=k}^\infty \;\frac{\alpha^p}{(j+\varsigma)^{p\gamma}} \leq \frac{\alpha^p}{(k+\varsigma)^{p\gamma}} + \int_{k}^\infty \frac{\alpha^p}{(x+\varsigma)^{p\gamma}}\; \rmn{d}x \leq \frac{p\gamma \alpha^p}{p\gamma-1} \cdot  \frac{1}{(k+\varsigma)^{p\gamma-1}}.
\]
In addition, it follows $
\alpha_k [{\sum}_{j=k}^\infty\alpha_j^p]^{2\theta} \leq \alpha^{1+2p\theta}(\frac{p\gamma}{p\gamma-1})^{2} \cdot \frac{1}{(k+\varsigma)^{2\theta(p\gamma-1)+\gamma}}$. Note that $\alpha^{1+2p\theta} \leq (1+\alpha)^{1+2p}$, which completes the proof of the statement (a). We refer to \cite[Lemma 3.7]{li2023convergence} for the proof of statement (b).  
\end{proof}
\begin{lemma}\label{lem:in-main-proof}
Let \cref{as:func-1} and \ref{R2} hold. Assume there exists $k, t \geq 1$ such that $\sum_{j=0}^{t-1} \alpha_{k+j} \leq 1/(3c\sL)$, then   
\[\|\nabla f (x^{k+i})\|^2 \geq \frac18\|\nabla  f (x^{k})\|^2-  \frac{1}{4}\alpha_{k}^{2q-2},\quad \forall\, 0\leq i\leq t.\]
\end{lemma}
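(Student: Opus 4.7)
The plan is to establish the lemma by an induction that first controls $\|\nabla f(x^{k+j})\|$ from above for $j\le i$, and then feeds this back into the reverse triangle inequality to produce a lower bound on $\|\nabla f(x^{k+i})\|$, which finally yields the squared bound after a simple $(a-b)^2\ge a^2/2-b^2$ step.

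First, I would unfold \ref{R2} telescopically to write $\|x^{k+i}-x^k\|\le \sum_{l=0}^{i-1}(c\alpha_{k+l}\|\nabla f(x^{k+l})\|+c\alpha_{k+l}^q)$. Using that $\{\alpha_k\}$ is non-increasing (since $\alpha_k=\alpha/k^\gamma$) and $q>1$, I bound the remainder term $\sum_{l=0}^{i-1}\alpha_{k+l}^q\le \alpha_k^{q-1}\sum_{l=0}^{i-1}\alpha_{k+l}\le \alpha_k^{q-1}/(3c\sL)$ by the hypothesis. Then I would run an induction on $j\in\{0,\dots,i\}$ to show
\[
 \|\nabla f(x^{k+j})\|\;\le\;\tfrac{3}{2}\|\nabla f(x^k)\|+\tfrac{1}{2}\alpha_k^{q-1}.
\]
The base case $j=0$ is trivial. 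For the inductive step, I plug the inductive hypothesis into the telescopic bound above, use $\sum_{l=0}^{j-1}\alpha_{k+l}\le 1/(3c\sL)$, and obtain $\sL\|x^{k+j}-x^k\|\le \tfrac{1}{2}\|\nabla f(x^k)\|+\tfrac{1}{2}\alpha_k^{q-1}$. Applying the Lipschitz gradient bound $\|\nabla f(x^{k+j})\|\le \|\nabla f(x^k)\|+\sL\|x^{k+j}-x^k\|$ closes the induction.

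With the upper bound in hand, I then apply the \emph{lower} Lipschitz estimate $\|\nabla f(x^{k+i})\|\ge \|\nabla f(x^k)\|-\sL\|x^{k+i}-x^k\|$ together with the same control $\sL\|x^{k+i}-x^k\|\le \tfrac{1}{2}\|\nabla f(x^k)\|+\tfrac{1}{2}\alpha_k^{q-1}$, yielding
\[
 \|\nabla f(x^{k+i})\|\;\ge\;\tfrac{1}{2}\|\nabla f(x^k)\|-\tfrac{1}{2}\alpha_k^{q-1}.
\]
Finally, to square this safely I split into two cases. If $\|\nabla f(x^k)\|\le \alpha_k^{q-1}$, then $\tfrac18\|\nabla f(x^k)\|^2-\tfrac14\alpha_k^{2q-2}\le 0\le \|\nabla f(x^{k+i})\|^2$ and the bound is trivial. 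Otherwise $\tfrac12\|\nabla f(x^k)\|-\tfrac12\alpha_k^{q-1}>0$, and using $(a-b)^2\ge \tfrac12 a^2-b^2$ (which follows from $(a/\sqrt2-b\sqrt2)^2\ge0$) with $a=\tfrac12\|\nabla f(x^k)\|$, $b=\tfrac12\alpha_k^{q-1}$ gives exactly $\tfrac18\|\nabla f(x^k)\|^2-\tfrac14\alpha_k^{2q-2}$.

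The only delicate point is the self-referential nature of the upper bound on $\|\nabla f(x^{k+j})\|$: the telescopic estimate for $\|x^{k+j}-x^k\|$ depends on the gradient norms at intermediate points. The induction above resolves this cleanly because the constant $1/(3c\sL)$ in the step-size hypothesis was chosen precisely to make the factor multiplying $\|\nabla f(x^k)\|$ in the inductive step equal $\tfrac12$, which together with the initial $1$ closes to $\tfrac32$. No further technical difficulties arise.
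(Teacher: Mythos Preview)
Your proof is correct and reaches exactly the same constants as the paper, but the technical device you use to tame the self-referential bound differs from the paper's. The paper writes the recursion for the \emph{differences} $y_j:=\|\nabla f(x^{k+j})-\nabla f(x^k)\|$, obtains $y_i\le s+r\sum_{j=0}^{i-1}a_j y_j$ with $r\sum a_j\le 1/3$, and then invokes the discrete Gronwall inequality to get $y_i\le e^{1/3}s\le\tfrac32 s$; from this it deduces $\|\nabla f(x^k)\|\le 2\|\nabla f(x^{k+i})\|+3c\sL\sum_{j}\alpha_{k+j}^q$, squares via $(a+b)^2\le 2a^2+2b^2$, and finally bounds $(\sum_j\alpha_{k+j}^q)^2\le\alpha_k^{2q-2}/(9c^2\sL^2)$. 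You instead run a direct strong induction on $\|\nabla f(x^{k+j})\|\le\tfrac32\|\nabla f(x^k)\|+\tfrac12\alpha_k^{q-1}$, feeding the hypothesis for all $l<j$ back into the telescoped \ref{R2} bound; the step-size budget $1/(3c\sL)$ makes the induction close with exactly the constants $\tfrac32$ and $\tfrac12$, and you use $(a-b)^2\ge\tfrac12a^2-b^2$ at the end. Your route is more elementary (no auxiliary lemma needed) and slightly more transparent about why the constant $1/(3c\sL)$ is the right threshold; the paper's Gronwall argument is more systematic and would generalize more readily if one wanted sharper constants or a different recursion structure.
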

We introduce Gronwall's inequality (cf. \cite[Appendix B, Lemma 8]{borkar2009stochastic}) to facilitate the derivation of \cref{lem:in-main-proof}.
	\begin{lemma}[Gronwall's Inequality]
		\label{Thm:G}
		Let $\{a_k\}_k \subseteq \R_{++}$ and $\{y_k\}_k \subseteq \R_+$ be given sequences. Suppose that we have $y_{t+1} \leq s + r \sum_{j=0}^t a_j y_j$ for all $t$ and some $s,r\geq0$. 
		Then, it holds that $y_{t+1} \leq s \cdot \exp({r\sum_{j=0}^t a_j})$ for all $t\geq 0$.
	\end{lemma}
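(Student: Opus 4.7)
The plan is to prove this by a short induction using the standard ``auxiliary sequence'' trick for discrete Gronwall inequalities. First, I would introduce the upper-bound sequence $z_t := s + r \sum_{j=0}^{t-1} a_j y_j$, with the empty-sum convention giving $z_0 = s$. The hypothesis then reads $y_{t+1} \leq z_{t+1}$ for all $t \geq 0$; to start the induction cleanly one also needs $y_0 \leq s = z_0$, which I would take as the natural base case (either implicit in the hypothesis or obtained by extending it to an empty sum).

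Next, I would observe that $\{z_t\}$ satisfies a one-step multiplicative recursion. From the definition and the inductive bound $y_t \leq z_t$,
\[
z_{t+1} - z_t = r a_t y_t \leq r a_t z_t, \qquad \text{hence} \qquad z_{t+1} \leq (1 + r a_t)\, z_t.
\]
Iterating this from $z_0 = s$ yields $z_t \leq s \prod_{j=0}^{t-1}(1+ra_j)$, and therefore
\[
y_t \leq s \prod_{j=0}^{t-1}(1+r a_j) \qquad \text{for all } t \geq 0.
\]

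Finally, I would invoke the elementary inequality $1+x \leq e^x$ term-by-term to pass from the product to the exponential:
\[
\prod_{j=0}^{t-1}(1 + r a_j) \leq \prod_{j=0}^{t-1} \exp(r a_j) = \exp\Bigl(r \sum_{j=0}^{t-1} a_j\Bigr).
\]
Re-indexing $t \mapsto t+1$ gives exactly $y_{t+1} \leq s \exp(r \sum_{j=0}^{t} a_j)$, which is the claim.

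There is no real obstacle here; this is a textbook result that the paper cites from Borkar's monograph rather than proves in detail. The only subtlety worth flagging is the base case: as written, the hypothesis $y_{t+1}\leq s + r\sum_{j=0}^t a_j y_j$ does not literally provide a bound on $y_0$, so one must either interpret it at $t=-1$ with an empty sum or treat $y_0 \leq s$ as implicit. Once that is settled, the induction and the $1+x\leq e^x$ estimate complete the proof in a couple of lines.
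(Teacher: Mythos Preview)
Your argument is correct and is the standard discrete Gronwall proof; the paper does not supply its own proof of this lemma but simply cites \cite[Appendix B, Lemma 8]{borkar2009stochastic}, as you already observe. Your remark about the base case $y_0 \leq s$ (equivalently, reading the hypothesis at $t=-1$ with an empty sum) is the right caveat, and nothing further is needed.
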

\emph{Proof of \cref{lem:in-main-proof}}.
When $i=0$, the bound in \cref{lem:in-main-proof} holds trivially. It holds for any integer $i \in [1,t]$ that
\begin{equation}
		\label{eq:lem-gron-1}
			\|\nabla f (x^{k+i}) - \nabla f (x^{k})\| \leq \sL \|x^{k+i} - x^{k}\|.
\end{equation}
According to the condition \ref{R2} and ${\sum}_{j=0}^{i-1} \;\alpha_{k+j} \leq 1/(3c\sL)$, we may bound the term $\|x^{k+i}-x^{k}\|$ as:	
	\begin{equation}
		\label{eq:lem-gron-2}
		\begin{aligned}
		\|x^{k+i}-x^{k}\| & \leq  c {\sum}_{j=0}^{i-1} \alpha_{k+j}\|\nabla f(x^{k+j})\| +  c {\sum}_{j=0}^{i-1} \alpha_{k+j}^q \\&\leq  (3\sL)^{-1}\|\nabla f(x^{k})\| + c \;{\sum}_{j=0}^{i-1} \alpha_{k+j}\|\nabla f(x^{k+j}) - \nabla f(x^k)\| +   c \;{\sum}_{j=0}^{i-1} \alpha_{k+j}^q.
		\end{aligned}
	\end{equation}		
Combining estimates \eqref{eq:lem-gron-1} and \eqref{eq:lem-gron-2}, then
\[
\|\nabla f (x^{k+i}) - \nabla f (x^{k})\| \leq c \sL \,{\sum}_{j=0}^{i-1} \,\alpha_{k+j}\|\nabla f(x^{k+j}) - \nabla f(x^{k})\|  + \tfrac13\|\nabla f(x^{k})\|  + c \sL \, {\sum}_{j=0}^{i-1} \alpha_{k+j}^q.
\]
Invoking Gronwall's inequality (\cref{Thm:G}) upon setting
		\begin{align*}
			s&:=\tfrac13 \|\nabla f(x^{k})\|  +  c \sL \, {\sum}_{j=0}^{i-1} \alpha_{k+j}^q, \quad 
			r:= c \sL,\quad a_j:= \alpha_{k+j}, \quad
			y_j:=\|\nabla f(x^{k+j}) - \nabla f(x^{k})\| 
		\end{align*}
		and utilizing ${\sum}_{j=0}^{i-1} \;\alpha_{k+j} \leq 1/(3c\sL)$, we obtain \[\|\nabla f (x^{k+i}) - \nabla f (x^{k})\|  \leq \exp(1/3) \cdot \big(\tfrac13 \|\nabla f(x^{k})\|  +  c \sL \, {\sum}_{j=0}^{i-1} \alpha_{k+j}^q \big).\]
	Noticing that $\exp(1/3)\leq \frac32$, we then invoke the triangle inequality and the above estimate to yield	\[
		\|\nabla  f (x^{k})\| \leq \|\nabla f (x^{k+i}) - \nabla f (x^{k})\| + \|\nabla f (x^{k+i})\| \leq \frac12 \|f (x^{k})\| + \frac{3c\sL}{2}
		{\sum}_{j=0}^{i-1} \alpha_{k+j}^q + \|\nabla f (x^{k+i})\|.
	\]	 
	Rearranging the above estimate gives $\|f (x^{k})\| \leq 3c\sL{\sum}_{j=0}^{i-1} \alpha_{k+j}^q + 2\|\nabla f (x^{k+i})\|$. Taking square of both sides and using $(a+b)^2 \leq 2a^2 + 2b^2 $, we have $\|f (x^{k})\|^2 \leq   18c^2\sL^2({\sum}_{j=0}^{i-1} \alpha_{k+j}^q)^2 + 8\|\nabla f (x^{k+i})\|^2$. Since $\{\alpha_k\}_k$ is non-increasing, by using ${\sum}_{j=0}^{i-1} \;\alpha_{k+j} \leq 1/(3c\sL)$, we have
	\[
	\Big({\sum}_{j=0}^{i-1} \alpha_{k+j}^q\Big)^2 \leq  \Big(\alpha_k^{q-1} {\sum}_{j=0}^{i-1}  \alpha_{k+j}\Big)^2\leq \frac{1}{9c^2\sL^2} \cdot  \alpha_{k}^{2q-2}.
	\]
Merging this estimate into $\|f (x^{k})\|^2 \leq   18c^2\sL^2({\sum}_{j=0}^{i-1} \alpha_{k+j}^q)^2 + 8\|\nabla f (x^{k+i})\|^2$ completes the proof.\hfill \qedsymbol{}
\subsection{Iterate Convergence}\label{subsec:iter-conv}
\begin{proof}
    Conditions \ref{R1}--\ref{R3} verifies \ref{C1}--\ref{C3} with 
\[I_k = k,\quad \beta_k = \alpha_k,\quad p_k=\tilde b \alpha_k^p,\quad q_k=\tilde c \alpha_k^q, \quad \text{where} \quad p,q>1. \]
Next, we show that $\alpha_k = \alpha/ k^\gamma$ with $\gamma \in( \min\{\frac{2}{p+1},\frac{1}{q}\},1]$ satisfies \ref{C4}. To that end, we need to show 
\[
\sum_{k=1}^{\infty}\, \alpha_k = \infty,\quad \sum_{k=1}^{\infty}\,\alpha_k \left[{\sum}_{t=k}^{\infty}\alpha_t^p\right]^\mu<\infty,\quad \text{and}\quad \sum_{k=1}^{\infty}\,\alpha_k \left[{\sum}_{t=k}^{\infty} \alpha_t^{2q-1}\right]^\mu <\infty\quad \text{for some $\mu\in(0,1)$}.
\]
The condition $\sum_{k=1}^{\infty}\, \alpha_k = \infty$ holds for all $\gamma \in(0,1]$. Next, it follows from \cref{lemma:step size} (b) and $\gamma > \frac{2}{p+1}$ that $\sum_{k=1}^{\infty}\,\alpha_k \left[{\sum}_{t=k}^{\infty}\alpha_t^p\right]^\mu<\infty$ when $\mu>\frac{1-\gamma}{p\gamma-1} \in(0,1)$. Similarly, we can show $\sum_{k=1}^{\infty}\,\alpha_k [{\sum}_{t=k}^{\infty} \alpha_t^{2q-1}]^\mu <\infty$. 

Therefore, invoking \cref{thm:main} (b), we conclude that $\{x^k\}_k$ converges to some stationary point $x^*\in\crit(f)$.
\end{proof}

\subsection{Proof of \cref{thm:convergence rate}} \label{proof:convergence rate}
Clearly, the conditions \ref{R1}--\ref{R2} imply \ref{C1}--\ref{C2} with 
\[I_k = k,\quad \beta_k = \alpha_k,\quad p_k=\tilde b \alpha_k^p,\quad q_k=\tilde c \alpha_k^q, \quad \text{where} \quad p,q>1\quad\text{and} \quad p+1\leq 2q. \]
In addition, the sequence $\{\alpha_k\}_k$ satisfies \ref{C3}. Hence, by \cref{thm:main} (a), it follows
\begin{equation} {\lim}_{k\to\infty} \|\nabla f(x^k)\| = 0\quad \text{and}\quad {\lim}_{k\to\infty}f(x^k) =  f^*. \label{eq:app:hihi} \end{equation}
We first assume that $\{x^k\}_k$ is bounded (which implies boundedness of $\acc(\{x^k\}_k)$). Since every accumulation point of $\{x^k\}_k$ is a stationary point of $f$, \cref{eq:app:hihi} readily implies $\acc(\{x^k\}_k)\subseteq\crit(f)$. Moreover, we have $f(\bar x)=f^*$ for all $\bar x\in\acc(\{x^k\}_k)$ owing to the continuity of $f$. Hence, there exists an integer $k_1 \geq 1$ such that $x^k \in V_{\varepsilon,\eta}$ and $|f(x^k) - f^*|<\min\{1,\eta\}$ for all $k\geq k_1$, where $V_{\varepsilon,\eta}\subset\Rn$ is in defined in \cref{lem:uniform-kl}. By \cref{as:kl} and \cref{lem:uniform-kl}, the KL inequality 
$\|\nabla f(x^k)\| \geq {\sC} |f(x^k) - f^*|^\theta$ holds for all $k \geq k_1$, where $\sC>0$ and $\theta \in [\frac12,1)$ is the uniformized KL exponent of $f$ on $\acc(\{x^k\}_k)$. To obtain the optimal rates of convergence, we invoke the KL inequality with the adjusted KL exponent $\vartheta \in [\theta,1)$ in the subsequent analysis, i.e.,
\begin{equation}
	\label{eq:kl-adjusted}
	\|\nabla f(x^k) \| \geq \sC |f(x^k) - f^*|^\vartheta.
\end{equation}
Inequality \cref{eq:kl-adjusted} is true for all $k \geq k_1$ because $|f(x^k) - f^*|<1$ and $|f(x^k) - f^*|^\theta \geq |f(x^k) - f^*|^\vartheta$.

Now, we are in the position to derive the rates for $\{f(x^k)\}_k$ and $\{\|\nabla f(x^k)\|\}_k$. We first introduce an auxiliary sequence $\{\Gamma_k\}_k$ to facilitate the rate analysis: 
\begin{equation}
	\label{def:Gamma-u}
	\Gamma_k :=f(x^k) + u_k - f^* \quad \text{and} \quad u_k:=\tilde b\;{\sum}_{i=k}^\infty \alpha_i^p \leq \sG /k^{p\gamma-1},\quad \text{for some $\sG>0$},
\end{equation}
where the last inequality is due to \Cref{lemma:step size}. Rearranging the estimate in \ref{R1} gives
\begin{equation}
    \label{eq:descent-again}
     \Gamma_k - \Gamma_{k+1} \geq b\alpha_k\|\nabla f(x^k)\|^2 \geq \sC^2b\alpha_k|f(x^k) - f^*|^{2\vartheta},
\end{equation}
where the last inequality follows from \eqref{eq:kl-adjusted}. In addition, applying the inequality $2|a|^{2\vartheta} + 2|b|^{2\vartheta} \geq |a + b|^{2\vartheta}$, $\vartheta \in[\frac12,1)$, $a,b \in \R$, we obtain $|f(x^k) - f^*|^{2\vartheta} + u_k^{2\vartheta} \geq \Gamma_k^{2\vartheta}/2$, which, along with the estimate \eqref{eq:descent-again}, leads to
	\[
	\Gamma_k - \Gamma_{k+1}  \geq \frac{\sC^2b\alpha_k}{2}(\Gamma_k^{2\vartheta} - 2 u_k^{2\vartheta}).
	\]
Substituting $\alpha_k = \alpha/k^\gamma$, invoking \cref{def:Gamma-u}, and rearranging this inequality, it holds that 
	\begin{equation}
	 	\label{eq:recursion-Gamma}
	 	\Gamma_{k+1} \leq \Gamma_k - \frac{\sC^2 b\alpha}{2} \cdot \frac{\Gamma_k^{2\vartheta}}{k^\gamma} +\frac{\sH}{k^{(1+2p\vartheta)\gamma-2\vartheta}},\quad\text{where}\quad\sH:=\sC^2\sG^{2\vartheta}b\alpha.
	 \end{equation}
	 In what follows, we will prove the convergence rate of $\{\Gamma_k\}_k$ given different $\vartheta\in[\frac12,1)$ and $\gamma>0$. 

\noindent\textbf{Step 1: Rates for auxiliary sequence $\Gamma_k$}

	\underline{Case 1:} $\vartheta =\frac12$. In this case, the estimate \eqref{eq:recursion-Gamma} reduces to
\[		\Gamma_{k+1} \leq \left[1-\frac{\sC^2b \alpha}{2}\cdot \frac{1}{k^\gamma} \right]\Gamma_{k} + \frac{\sH}{k^{(1+p)\gamma-1}}.
\]
	When $\gamma < 1$, we have $\Gamma_k = \cO(1/k^{p\gamma-1})$ by \Cref{lemma:rate} (b). When $\gamma=1$ and $\alpha > \frac{2(p-1)}{b\sC^2}$, then \Cref{lemma:rate} (a) yields $\Gamma_k = \mathcal O(1/k^{p-1})$.

	\underline{Case 2:} $\vartheta \in (\frac12,1)$, $\gamma \neq 1$. Similar to Case 1, we adopt a similar approach to derive the rate for the sequence $\{\Gamma_k\}_k$. Our strategy involves transforming the recursion \eqref{eq:recursion-Gamma} into a suitable form that allows us to apply \Cref{lemma:rate}.
	
	To this end, we first define a mapping $x \mapsto h_\vartheta(x):= x^{2\vartheta}$, which is convex for all $x>0$, i.e.,
	\begin{equation}
		\label{eq:convexity-h}
		h_\vartheta(y) \geq  h_\vartheta(x) + h_\vartheta^\prime(x)(y-x) = 2\vartheta x^{2\vartheta-1} y +  (1 - 2\vartheta )h_\vartheta(x) \geq x^{2\vartheta-1} y - h_\vartheta(x) \quad \forall~x,y>0,\;\vartheta\in(\tfrac12,1).
	\end{equation}
	Clearly, it holds that $h_\vartheta(\Gamma_k) = \Gamma_k^{2\vartheta}$. We now leverage the inequality \eqref{eq:convexity-h} to obtain a lower bound for $\Gamma_k^{2\vartheta}$. Specifically, we apply \eqref{eq:convexity-h} with
\[
	x:=\tilde\sC/k^{\delta} \quad \text{and} \quad y:=\Gamma_k,\quad \text{where}\quad {\textstyle \delta:=\min\left\{\frac{1-\gamma}{2\vartheta-1}, p\gamma-1 \right\} \quad \text{and} \quad\tilde\sC= \left(\frac{4\delta}{b\sC^2 \alpha}\right)^{\frac{1}{2\vartheta-1}}},
	\]
	 which yields $\Gamma_k^{2\vartheta} \geq \frac{4\delta}{b\sC^2\alpha} \cdot  (\Gamma_k/k^{(2\vartheta-1)\delta}) - \tilde\sC^{2\vartheta}/k^{2\vartheta\delta} $. Combining this inequality with \eqref{eq:recursion-Gamma}, it holds that
	 \[
	 \Gamma_{k+1} \leq \left[1- \frac{2\delta}{k^{\gamma+(2\vartheta-1)\delta}} \right]\Gamma_k + \frac{\sC^2 \tilde \sC^{2\vartheta}b\alpha}{2}\cdot\frac{1}{k^{\gamma+2\vartheta\delta}}  +\frac{\sH}{k^{(1+2p\vartheta)\gamma-2\vartheta}}.
	 \]
	By the definition of $\delta$, it follows that $\gamma+2\vartheta \delta \leq (1+2p\vartheta)\gamma-2\vartheta$. Hence, $\cO(1/k^{\gamma+2\vartheta\delta})$ is the leading term and there exists a constant $\hat \sC>0$ such that 
	\[
	 \Gamma_{k+1} \leq \left[1- \frac{2\delta}{k^{\gamma+(2\vartheta-1)\delta}} \right]\Gamma_k + \frac{\hat \sC}{k^{\gamma+2\vartheta\delta}}.
	 \]
	Applying \Cref{lemma:rate} yields $\Gamma_k=\cO(k^{-\delta})$.  Our aim is to obtain the optimal rate for the given step sizes parameter $\gamma>0$. Note that the rate parameter $\delta>0$ is determined by the adjusted KL exponent $\vartheta$ which can be chosen freely in the interval $[\theta , 1)$. Moreover, $\delta$ is a non-increasing function of $\vartheta$, and thus, the optimal choice is fixing $\vartheta = \theta$. On the other hand, if $\theta=\frac12$, we simply set $\vartheta=\frac12$ and the rate is provided in Case 1 .
	
	To summarize, we provide the rate of $\{\Gamma_k\}_k$ in terms of the original KL exponent $\theta$ and the step sizes parameter $\gamma$ :
	\begin{equation}\label{eq:rate-Gamma}
		\begin{aligned}
		\Gamma_k = \cO(k^{-\psi(\theta,\gamma)})\quad \text{where}\quad \psi(\theta,\gamma):=\min\{p\gamma-1, \tfrac{1-\gamma}{2\theta-1}\}\quad\text{for} \quad \gamma \in (\textstyle\frac1p,1),
	\end{aligned}
	\end{equation}
and $\psi(\tfrac12,1):=p-1$ when $\alpha > \frac{2(p-1)}{b\sC^2}$.

\textbf{Step 2: Rates for $f(x^k)$}

Recall the definition of $\Gamma_k =f(x^k) + u_k - f^*$. Using triangle inequality and the fact that $\Gamma_k,u_k \geq 0$, we obtain
\[
|f(x^k) - f^*| = |f(x^k) + u_k - f^* - u_k | \leq \Gamma_k + u_k.
\]
From \eqref{def:Gamma-u}, it holds that $u_k = \cO(1/k^{p\gamma-1})$. Utilizing the rate $\Gamma_k = \cO(k^{-\psi(\theta,\gamma)})$ in \eqref{eq:rate-Gamma} and noting that $\psi(\theta,\gamma) \leq p\gamma-1$ for all $\theta\in[0,1)$ and $\gamma \in (\frac1p,1]$, we thus obtain  
\begin{equation}
	\label{eq:rate-func-z}
|f(x^k) - f^*| = \cO(k^{-\psi(\theta,\gamma)}).
\end{equation}

\textbf{Step 3: Rates for $\|\nabla f(x^k)\|^2$}

In this step, we will derive the rate for $\|\nabla f(x^k)\|^2$ using the rate of $\{\Gamma_k\}_k$ in \eqref{eq:rate-Gamma}. We now begin with some discussion to motivate why a more dedicated technique is needed to achieve better rates.

\textbf{Step 3-1: Roadmap of the proof}

Clearly, the relation \eqref{eq:descent-again} leads to $b\alpha_k\|\nabla f(x^k)\|^2 \leq \Gamma_k$, and it follows directly from the rates for $\{\Gamma_k\}_k$ in \eqref{eq:rate-Gamma} that $\|\nabla f(x^k)\|^2=\cO(k^{-\psi(\theta,\gamma)+\gamma})$. But we will soon see in the subsequent analysis that this rate can be improved. 

Let $\varpi:\N\to\N$ denote an index mapping and let us sum \eqref{eq:descent-again} from $k$ to $\varpi(k)$: %
\begin{equation}
	\label{eq:rate-grad-1}
b{\sum}_{i=k}^{\varpi(k)}\alpha_i\|\nabla f(x^i)\|^2 \leq \Gamma_k.
\end{equation}
We will provide a detailed definition of $\varpi$ later. Our next step is to connect $\|\nabla f(x^i)\|$ and $\|\nabla f(x^k)\|$. We use a simple example to emphasize the importance of bounding $\|\nabla f(x^k)\|$ by $\|\nabla f(x^i)\|$ in obtaining the improved rate. Assume $\|\nabla f(x^i)\| \geq \|\nabla f(x^k)\|$ for all integers $i \in [k,\varpi(k)]$ and assume $\sum_{i=k}^{\varpi(k)}\alpha_i \geq \bar \alpha$ for some constant $\bar \alpha>0$ and for all $k$. Then, we can further write \eqref{eq:rate-grad-1} as
\[
b\bar \alpha\|\nabla f(x^k)\|^2 \leq b{\sum}_{i=k}^{\varpi(k)}\alpha_i\|\nabla f(x^i)\|^2 \leq \Gamma_k.
\]
In this way, we obtain $\|\nabla f(x^k)\|^2=\cO(k^{-\psi(\theta,\gamma)})$, which is faster than $\cO(k^{-\psi(\theta,\gamma)+\gamma})$. In what follows, we will show the existence of the mapping $\varpi$ such that the quantity $\sum_{i=k}^{\varpi(k)}\alpha_i$ fulfills certain bounds for all $k$ sufficiently large ensuring that \cref{lem:in-main-proof} is applicable. (\cref{lem:in-main-proof} will provide the required upper bound of $\|\nabla f(x^k)\|$).

\textbf{Step 3-2: The mapping $\varpi:\N\to\N$}

Let us define the mapping \[\varpi(k):=\sup_{t \geq 1}\bigg\{ {\sum}_{j=0}^{t-1}\alpha_{k+j} \leq \frac{1}{3c\sL}\bigg\}.\] Since $\alpha_k \to 0$ and $\sum_{k=1}^\infty \alpha_k = \infty$, we conclude that $\varpi(k)$ is finite and non-empty for all $k$ sufficiently large. Then, there is $k_2 \geq k_1$ such that $\alpha_k \leq \frac{1}{6c\sL}$ and $\varpi(k)$ is well-defined for all $k \geq k_2$. By the definition of $\varpi$, we have ${\sum}_{j=0}^{\varpi(k)-1}\alpha_{k+j} \leq \frac{1}{3c\sL}$ and
	\begin{equation}\label{eq:rate-grad-4}
\frac{1}{3c\sL} < 
\alpha_{k+\varpi(k)} + {\sum}_{j=0}^{\varpi(k)-1}\alpha_{k+j} \leq \frac{1}{6c\sL}+{\sum}_{j=0}^{\varpi(k)-1}\alpha_{k+j}  \quad \Longrightarrow \quad \frac{1}{6c\sL} \leq {\sum}_{j=0}^{\varpi(k)-1}\alpha_{k+j} \leq \frac{1}{3c\sL}.
\end{equation}
Hence, \cref{lem:in-main-proof} is applicable for all $k \geq k_2$. 

\textbf{Step 3-3: Rates for $\|\nabla f(x^k)\|$}

Applying \cref{lem:in-main-proof}  with $t=\varpi(k)$ and utilizing the estimate \eqref{eq:rate-grad-1}, this yields
\begin{align*}
	\Gamma_k &\geq b{\sum}_{j=0}^{t-1}\alpha_{k+j}\|\nabla f(x^{k+j})\|^2 \geq \frac{b}{8} \Big({\sum}_{j=0}^{t-1}\alpha_{k+j}\Big)\|\nabla f(x^k)\|^2 - \frac{b}{4}\alpha_k^{2q-2} \cdot \Big({\sum}_{j=0}^{t-1}\alpha_{k+j}\Big) \\ &\geq \frac{b}{48c\sL}\|\nabla f(x^k)\|^2 - \frac{b}{12c\sL} \cdot \alpha_k^{2q-2},
\end{align*}	
where the last inequality is due to the bound \eqref{eq:rate-grad-4}. Rearranging this inequality, substituting $\alpha_k = \frac{\alpha}{k^\gamma}$, and using the rates for $\{\Gamma_k\}_k$ in \eqref{eq:rate-Gamma}, we have for all $k$ sufficiently large,
\begin{equation}
	\label{eq:rate-grad-x}
 \|\nabla f(x^k) \|^2 = \cO(k^{-\psi(\theta,\gamma)} + k^{-(2q-2)\gamma}) = \cO(k^{-\psi(\theta,\gamma)}),
\end{equation}
where the last relation holds because $(2q-2)\gamma \geq (p-1)\gamma \geq \psi(\theta,\gamma)$.

\textbf{Step 4: Discussion}

So far, we have established the convergence rates for $\{f(x^k)\}_k$ and $\{\|\nabla f(x^k)\|^2\}_k$ shown in \cref{thm:convergence rate} for all $\gamma \in (\frac{1}{p},1]$, provided that $\{x^k\}_k$ is bounded, and where $\theta$ denotes the uniformized KL-exponent of $f$ on $\acc(\{x^k\}_k)$. 

Alternatively, suppose now that $\acc(\{x^k\}_k)$ is non-empty and $\gamma$ is chosen via $\gamma \in (\frac{2}{p+1},1]$. In this scenario, as argued in \cref{subsec:iter-conv}, we have $\sum_{k=1}^\infty \alpha_k \left[{\sum}_{t=k}^\infty \alpha_k^{p}\right]^\mu < \infty$ for $\mu > \frac{1-\gamma}{p\gamma-1} \in (0,1)$ and hence, \cref{thm:main} (b) is applicable and $\{x^k\}_k$ converges to some stationary point $x^*\in\crit(f)$. Consequently, in this case, all the previous estimates and derivations hold with $\theta$ being the KL-exponent of $f$ at $x^*$. The proof of \cref{thm:convergence rate} is complete if we can establish the stated convergence rates for $\{x^k\}_k$. We handle this task in the next (and last) step.
 
\textbf{Step 5: Rates for $\{x^k\}_k$}

As before, due to $x^k \to x^*$ and $f(x^k) \to f(x^*) = f^*$, there exists an integer $k_3 \geq 1$ such that we have
\[
\|\nabla f(x^k)\| \geq {\sC} |f(x^k) - f^*|^\theta \geq \sC|f(x^k) - f^*|^\vartheta, \quad \vartheta\in[\theta,1]
\]
for all $k \geq k_1$. (Here, $\theta$ is the KL-exponent of $f$ at $x^*$).  Moreover, \cref{lem:dk-iter} is applicable for all $j \geq k_3$, i.e., it holds that
  \begin{equation}\label{eq:rate-iter-1}
		\|x^j - x^{j+1}\|  \leq  \frac{\sqrt2 c}{b}(\Delta_j-\Delta_{j+1}) + \sqrt2 c \sC \alpha_j v_j^{\vartheta},
    \end{equation}
    where $\Delta_j:=\frac{1}{\sC(1-\vartheta)}[f(x^{j})- f^* + v_j]^{1-\vartheta}$ and $v_j:= \tilde b \sum_{i=j}^{\infty}\alpha_i^p + \frac{b}{2c}\sum_{i=k}^{\infty}\alpha_i^{2q-1}$. 
Using the convergence of $\{x^k\}_k$ and the triangle inequality and summing \eqref{eq:rate-iter-1} from $j=k$ to infinity, we obtain
\[\|x^k - x^*\| \leq \sum_{j=k}^\infty\|x^j-x^{j+1}\| \leq \frac{\sqrt2 c}{b}\Delta_k + \sqrt2 c \sC \sum_{j=k}^\infty \alpha_j v_j^{\vartheta}.\]
Invoking $\alpha_k = \alpha/k^\gamma$ and $p \leq 2q-1$ and applying \Cref{lemma:step size}, we get the following rates:
\begin{equation}
    \label{eq:rate-v}
    v_k \leq b_\gamma/k^{p\gamma-1}\quad \text{and} \quad {\sum}_{j=k}^\infty \alpha_j v_j^{\vartheta} \leq c_\gamma / k^{(p\gamma-1)\vartheta - (1-\gamma)}\quad \text{for some $b_\gamma,c_\gamma>0$.}
\end{equation}
Utilizing the rate of  $\{f(x^k)\}_k$ derived in \textbf{Step 2} and the rate of $\{v_k\}_k$, it follows
\[
\Delta_k \leq \frac{|f(x^k) - f^*|^{1-\vartheta}}{\sC(1-\vartheta)} +  \frac{b_\gamma^{1-\vartheta}}{\sC(1-\vartheta)} \cdot \frac{1}{k^{(p\gamma-1)(1-\vartheta)}} = \cO\Big(\frac{1}{k^{(1-\vartheta)\psi(\vartheta,\gamma)}}\Big).
\]
Combining this bound with \eqref{eq:rate-v}, we obtain the rate for $\{x^k\}_k$ in terms of the adjusted KL exponent $\vartheta$ (with $\frac{1-\gamma}{p\gamma-1} < \vartheta \in[\theta,1)$):
\[
\|x^k - x^*\| = \cO(k^{-\phi(\vartheta,\gamma)}),\quad\text{where} \quad  \phi(\vartheta,\gamma):=\min\{(1-\vartheta)\psi(\vartheta,\gamma), (p\gamma-1)\vartheta-(1-\gamma)\}.
\]
According to the definition of mapping $\psi$ in \eqref{eq:rate-Gamma}, it holds that 
\begin{align*}
	 \phi(\vartheta,\gamma):=\begin{cases}
		(p\gamma-1)\vartheta - (1-\gamma) &\text{if}\; \vartheta \in \big(\frac{1-\gamma}{p\gamma-1},\frac{(p-1)\gamma}{2(p\gamma-1)} \big]\\[2mm]
		\frac{(1-\vartheta)(1-\gamma)}{2\vartheta-1} &\text{if}\; \vartheta \in \big(\frac{(p-1)\gamma}{2(p\gamma-1)},1 \big)
	\end{cases}\quad \text{and} \quad \gamma \in (\tfrac{2}{p+1},1),
\end{align*}
 and $\phi(\vartheta,\gamma):=\frac{p-1}{2}$ when $\gamma=1, \vartheta=\frac12$ and $\alpha > \frac{2(p-1)}{b\sC^2}$.
 Note that the mapping $\phi(\cdot,\gamma)$ is increasing over the interval $(\frac{1-\gamma}{p\gamma-1},\frac{(p-1)\gamma}{2(p\gamma-1)}]$ and decreasing over $(\frac{(p-1)\gamma}{2(p\gamma-1)},1)$. Hence, the optimal choice of $\vartheta\in[\theta,1)$ that can maximize $\phi(\vartheta,\gamma)$ is to set $\vartheta=\frac{(p-1)\gamma}{2(p\gamma-1)}$ if $\theta\leq \frac{(p-1)\gamma}{2(p\gamma-1)}$, and $\vartheta=\theta$, otherwise. In conclusion, we have $\|x^k-x^*\| = \cO(k^{-\varphi(\theta,\gamma)})$, where  $\varphi(\theta,\gamma):=\max_{\theta \leq \vartheta <1} \phi(\vartheta,\gamma)$, that is, 
\begin{align*}
    \varphi(\theta,\gamma):=\begin{cases}
		\tfrac{(p+1)\gamma}{2}-1 &\text{if}\; \theta \in \big[\frac12,\frac{(p-1)\gamma}{2(p\gamma-1)}\big]\\[2mm]
		\frac{(1-\gamma)(1-\theta)}{2\theta-1} &\text{if}\; \theta \in \big(\frac{(p-1)\gamma}{2(p\gamma-1)},1\big)
	\end{cases}\quad \text{and} \quad \gamma \in (\tfrac{2}{p+2},1),
\end{align*}
and $\varphi(\theta,\gamma):=\frac{p-1}{2}$ when $\gamma=1, \theta=\frac12$ and $\alpha > \frac{2(p-1)}{b\sC^2}$. Alternatively, when $\gamma \in (\tfrac{2}{p+2},1)$, we can express the rates function as $\varphi(\theta,\gamma)=\min\{\tfrac{(p+1)\gamma}{2}-1,\frac{(1-\gamma)(1-\theta)}{2\theta-1}\}$.

\section{Appendix: Decentralized Gradient Descent}
For a more compact representation, we denote throughout this section 
\begin{align*}
	\bx:=\left[
	x_1,x_2,\cdots ,x_n
	\right]\in\R^{d\times n} \quad \text{and} \quad 
	\nabla F(\bx):=\left[
	\nabla f_1(x_1),  \nabla f_2(x_2), \cdots,  \nabla f_n(x_n)
	\right] \in \R^{d\times n}.
\end{align*}
Then, the update \eqref{algo:dgd} of $\DGD$ can be written as
\[\bx^{k+1} = \bx^k W - \alpha_k \nabla F(\bx^k).\]  
	To show \cref{lemma:dgd} and \ref{prop:dgd}, we need the following supporting lemmas whose proof is presented in \cref{proof:lem:sublinear}.
\begin{lemma}
	\label{lem:sublinear}
	Assume the sequence $\{\alpha_k\} \subseteq \R_{++}$ satisfies $\lim_{k\to\infty} a_k =0$ and $\lim_{k\to\infty} \frac{a_{k+1}}{a_k}=1$. Then, for an arbitrary $\rho\in[0,1)$, there exists a constant $\tilde c>0$ such that 
	$\sum_{i=1}^{k} a_i \rho^{k-i} \leq \tilde c a_{k+1}$ for all $k \in \N$.
\end{lemma}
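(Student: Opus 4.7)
The sum $\sum_{i=1}^k a_i\rho^{k-i}$ is a discrete convolution of $\{a_i\}$ against a geometric kernel that concentrates mass near the last index. Because the hypothesis $a_{k+1}/a_k\to 1$ forces consecutive terms of $\{a_k\}$ to be close in ratio, the kernel's geometric decay should win against the slow variation of $a_i$, so the whole sum should be comparable to $a_{k+1}$ times a constant depending only on $\rho$. The plan is therefore to (i) use the ratio condition to bound $a_{k-j}$ in terms of $a_{k+1}$ geometrically, (ii) split the sum into a tail that matches the kernel's geometry and a finite head, and (iii) control the head by showing that $\{a_k\}$ cannot decay faster than any geometric sequence.

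\textbf{Tail estimate.} Fix $\varepsilon>0$ small enough that $(1+\varepsilon)\rho<1$. Since $a_{k+1}/a_k\to 1$, there exists $N\in\N$ such that $a_k\leq(1+\varepsilon)a_{k+1}$ for all $k\geq N$. Iterating this yields $a_{k-j}\leq(1+\varepsilon)^{\,j+1}a_{k+1}$ whenever $k-j\geq N$. Splitting the sum at $N$, the portion $\sum_{i=N}^{k}a_i\rho^{k-i}$ is then dominated (after substituting $j=k-i$) by $(1+\varepsilon)\,a_{k+1}\sum_{j=0}^{k-N}[(1+\varepsilon)\rho]^{j}$, which is at most $\tfrac{1+\varepsilon}{1-(1+\varepsilon)\rho}\,a_{k+1}$.

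\textbf{Head estimate and slow decay.} The remaining part $\sum_{i=1}^{N-1}a_i\rho^{k-i}$ is at most $M\rho^{k-N+1}/(1-\rho)$, where $M:=\max_{i<N}a_i$. To absorb this into a multiple of $a_{k+1}$, I will show that $\rho^{k}/a_{k+1}$ remains bounded as $k\to\infty$. Choose any $\rho'\in(\rho,1)$; since $a_{k+1}/a_k\to 1>\rho'$, there exists $k_0$ with $a_{k+1}\geq \rho'a_k$ for every $k\geq k_0$, hence $a_{k+1}\geq a_{k_0}(\rho')^{k+1-k_0}$ and $\rho^k/a_{k+1}\leq [a_{k_0}\rho']^{-1}(\rho/\rho')^k(\rho')^{k_0}$, which is bounded. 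This is the only delicate point of the argument.

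\textbf{Conclusion.} Combining the tail and head bounds produces a constant $\tilde c_1$ such that $\sum_{i=1}^k a_i\rho^{k-i}\leq\tilde c_1\,a_{k+1}$ for every $k\geq N$. For the finitely many indices $k<N$ the inequality holds trivially after enlarging $\tilde c_1$ (since each $a_{k+1}>0$), yielding the desired universal constant $\tilde c$. The only substantive obstacle is the slow-decay estimate in the head: once $\rho^k/a_{k+1}$ is controlled, the rest of the proof reduces to geometric-series bookkeeping.
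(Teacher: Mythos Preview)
Your proof is correct and follows essentially the same approach as the paper: both split the convolution at a fixed index, control the tail via the ratio condition $a_{k+1}/a_k\to 1$ together with a geometric series whose ratio lies strictly below $1$, and handle the head by showing $\rho^k/a_{k+1}$ is bounded because $\{a_k\}$ cannot decay faster than any geometric rate. The only cosmetic difference is that the paper uses a single parameter $\eta=(\rho+1)/2$ in place of your two parameters $1/(1+\varepsilon)$ and $\rho'$.
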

\begin{lemma}
	\label{lem:dgd-descent}
	Suppose \cref{as:func} and \cref{as:matrix}. Let $\{\bx^k\}_k$ be generated by \cref{algo:dgd} with the step size $\alpha_k\in\left(0,\frac{1}{{\sf L}n}\right)$, then $	f(\bar x^{k+1})  
	 \leq f(\bar x^k) - \frac{\alpha_k}{2}\|\nabla f(\bar x^k)\|^2  + \frac{\alpha_k\sL^2}{2n}\|\bar x^k \one^T -  \bx^k\|^2_F.$
\end{lemma}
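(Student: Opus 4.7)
The plan is to derive this descent-type bound by following the now-standard decomposition for distributed methods: reduce the dynamics of $\bar x^k$ to a perturbed gradient step on $f$, apply the usual smoothness-based descent inequality, and then control the perturbation using the consensus error.

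First, I would exploit the doubly stochastic property of $W$. Since $W\one = \one$ (by \cref{as:matrix}), averaging the $\DGD$ update \cref{algo:dgd} yields $\bar x^{k+1} = \bar x^k - \alpha_k g^k$, where $g^k := \frac{1}{n}\sum_{i=1}^n \nabla f_i(x_i^k)$ is the ``pseudo-gradient.'' Next, I would apply the descent lemma for $\sL$-smooth $f$ (\cref{as:func} implies $\sL$-smoothness of $f$) to the averaged iterates:
\[
f(\bar x^{k+1}) \leq f(\bar x^k) - \alpha_k \iprod{\nabla f(\bar x^k)}{g^k} + \frac{\sL \alpha_k^2}{2}\|g^k\|^2.
\]

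Then I would use the polarization-type identity $-\iprod{a}{b} = \frac{1}{2}\|a-b\|^2 - \frac{1}{2}\|a\|^2 - \frac{1}{2}\|b\|^2$ with $a = \nabla f(\bar x^k)$ and $b = g^k$ to split the inner product, producing
\[
f(\bar x^{k+1}) \leq f(\bar x^k) - \frac{\alpha_k}{2}\|\nabla f(\bar x^k)\|^2 + \frac{\alpha_k}{2}\|\nabla f(\bar x^k) - g^k\|^2 + \frac{\alpha_k(\sL\alpha_k - 1)}{2}\|g^k\|^2.
\]
The last term is nonpositive because $\alpha_k < \frac{1}{\sL n} \leq \frac{1}{\sL}$, so it can be dropped.

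The key remaining step, and the only one requiring real work, is bounding $\|\nabla f(\bar x^k) - g^k\|^2$ by the consensus error. Writing $\nabla f(\bar x^k) - g^k = \frac{1}{n}\sum_{i=1}^n [\nabla f_i(\bar x^k) - \nabla f_i(x_i^k)]$, I would apply Jensen's inequality followed by the $\sL$-smoothness of each $f_i$ (guaranteed by \cref{as:func}) to obtain
\[
\|\nabla f(\bar x^k) - g^k\|^2 \leq \frac{1}{n}{\sum}_{i=1}^n \|\nabla f_i(\bar x^k) - \nabla f_i(x_i^k)\|^2 \leq \frac{\sL^2}{n}{\sum}_{i=1}^n \|\bar x^k - x_i^k\|^2 = \frac{\sL^2}{n}\|\bar x^k \one^\top - \bx^k\|_F^2.
\]
Plugging this into the earlier inequality yields the claim. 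I expect no genuine obstacle here; the only care needed is in verifying that the step-size constraint $\alpha_k < \frac{1}{\sL n}$ is indeed sufficient to absorb the quadratic $\|g^k\|^2$ term and in keeping track of the averaging factor $\frac{1}{n}$ when passing between $g^k$ and the Frobenius norm of the consensus matrix.
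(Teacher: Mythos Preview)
Your proposal is correct and follows essentially the same route as the paper: average the $\DGD$ update using $W\one=\one$ to get $\bar x^{k+1}=\bar x^k-\alpha_k g^k$, apply the descent lemma, rewrite the inner product via the polarization identity, drop the nonpositive $\|g^k\|^2$ term (using $\alpha_k\sL<1$), and bound $\|\nabla f(\bar x^k)-g^k\|^2$ by Jensen/Cauchy--Schwarz plus componentwise $\sL$-smoothness. The paper's argument is identical up to notation.
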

\subsection{Proof of \cref{lemma:dgd}}\label{proof:lemma:dgd}
\begin{proof}
	By \cref{as:matrix} (b) and Cauchy-Schwartz inequality, 
    we have
	\begin{align*}
		\|\bx^{k+1} - \bar x^{k+1} \one^T\|_F &= \|\bx^{k+1}(I - \tfrac{1}{n}\one \one^T)\|_F \leq  \|\bx^1\|_F\|W^{k} - \tfrac{1}{n}\one \one^T\| +  \sum_{i=1}^{k}\alpha_i \|W^{k-i} - \tfrac{1}{n}\one \one^T\| \|\nabla F(\bx^i)\|_F\\
		&\leq \rho^{k}\|\bx^1\|_F + \sum_{i=1}^{k}\alpha_i \rho^{k-i} \|\nabla F(\bx^i)\|_F
	\end{align*}
where $\rho$ represents the second largest magnitude eigenvalue of $W$. Taking square on the both sides of the latter estimate yields
\[
		\|\bx^{k+1} - \bar x^{k+1} \one^T\|_F^2 \leq 2\rho^{2k}\|\bx^1\|_F^2 + 2\Big({\sum}_{i=1}^{k}\alpha_i \rho^{k-i} \|\nabla F(\bx^i)\|_F\Big)^2.
\]
Let us expand the last term and use the inequality $2ab\leq a^2+b^2$, then
\begin{align*}
	\Big( {\sum}_{i=1}^{k}\alpha_i \rho^{k-i} \|\nabla F(\bx^i)\|_F\Big)^2 & \leq \sum_{i=1}^{k}\alpha_i^2 \rho^{2k-2i} \|\nabla F(\bx^i)\|_F^2 + \sum_{i=2}^{k}\sum_{j=1}^{i-1}\alpha_i \alpha_j \rho^{2k-i-j}(\|\nabla F(\bx^i)\|_F^2+\|\nabla F(\bx^j)\|_F^2).
\end{align*}
By \cref{lem:sublinear}, there exists a constant $c_\rho>\frac{1}{1-\rho}$ such that,
\begin{align*}
	&\quad\sum_{i=2}^{k}\sum_{j=1}^{i-1}\alpha_i \alpha_j \rho^{2k-i-j}(\|\nabla F(\bx^i)\|_F^2+\|\nabla F(\bx^j)\|_F^2)\\
	 &= \sum_{i=2}^{k} \rho^{k-i} \alpha_i \|\nabla F(\bx^i)\|_F^2\sum_{j=1}^{i-1} \rho^{k-j}\alpha_j + \sum_{i=2}^{k} \rho^{k-i} \alpha_i \sum_{j=1}^{i-1} \rho^{k-j}\alpha_j \|\nabla F(\bx^j)\|_F^2\\
	 &\leq c_\rho \sum_{i=2}^{k} \rho^{k-i} \alpha_i^2 \|\nabla F(\bx^i)\|_F^2 + \sum_{j=1}^{k-1} \rho^{k-j} \alpha_j \|\nabla F(\bx^j)\|_F^2 \sum_{i=j+1}^{k} \rho^{k-i}  \alpha_i \leq 2c_\rho \sum_{i=1}^{k} \rho^{k-i} \alpha_i^2 \|\nabla F(\bx^i)\|_F^2,
\end{align*}
where in the last inequality, we use the fact that $\{\alpha_k\}_k$ is non-increasing and $\sum_{i=1}^{k} \rho^{k-i}<1/(1-\rho)$. 
As a result, $( \sum_{i=1}^{k}\alpha_i \rho^{k-i} \|\nabla F(\bx^i)\|_F)^2 \leq (1+2c_\rho)\sum_{i=1}^{k} \rho^{k-i} \alpha_i^2 \|\nabla F(\bx^i)\|_F^2$, and thus,
\[\|\bx^{k+1} - \bar x^{k+1} \one^T\|_F^2 \leq 2\rho^{2k}\|\bx^1\|_F^2 + 2(1+2c_\rho)\sum_{i=1}^{k} \rho^{k-i} \alpha_i^2 \|\nabla F(\bx^i)\|_F^2. \]
Invoking the Cauchy-Schwartz inequality and Lipschitz smoothness of each component function, we have 
\begin{align*}
	\|\nabla F(\bx^i)\|_F^2 \leq 2\|\nabla F(\bx^i) - \nabla F(\bar{x}^i\one^T)\|_F^2 + 2\|\nabla F(\bar{x}^i\one^T)\|_F^2 \leq 2{\sL}^2\|\bx^i-\bar{x}^i\one^T\|_F^2 + 4n\sL(f(\bar{x}^i) -\bar f).
\end{align*}
Consequently, by merging the last two estimates, we obtain
\begin{equation}
	\label{eq:dgd-est-1}
	\|\bx^{k+1} - \bar x^{k+1} \one^T\|_F^2 \leq c_1 \rho^{2k} + c_2 \sum_{i=1}^{k} \rho^{k-i} \alpha_i^2\|\bx^i-\bar{x}^i\one^T\|_F^2 + c_3\sum_{i=1}^{k} \rho^{k-i} \alpha_i^2(f(\bar{x}^i) -\bar f),
\end{equation}
where we define $c_1:= 2\|\bx^1\|_F^2, c_2:=4{\sL}^2(1+2c_\rho),$ and $c_3:=8n\sL (1+2c_\rho)$ to simplify the notations. Let us consider the summation
\begin{equation}
	\label{eq:sum-1}
	\begin{aligned}
		&\quad \sum_{k=1}^{t+1} \alpha_k\|\bx^k-\bar{x}^k\one^T\|_F^2 = \alpha_1\|\bx^1-\bar{x}^1\one^T\|_F^2 + \sum_{k=1}^{t} \alpha_{k+1}\|\bx^{k+1}-\bar{x}^{k+1}\one^T\|_F^2 \\
		&\leq  c_1 \sum_{k=0}^{t} \rho^{2k} \alpha_{k+1} + c_2 \sum_{k=1}^{t}\alpha_{k+1} \sum_{i=1}^{k}\rho^{k-i}\alpha_i^2\|\bx^i-\bar{x}^i\one^T\|_F^2 +  c_3 \sum_{k=1}^{t}\alpha_{k+1} \sum_{i=1}^{k}\rho^{k-i}\alpha_i^2(f(\bar{x}^i) -\bar f)\\
		&\leq \frac{c_1\alpha_1}{1-\rho^2} + c_2 \sum_{i=1}^{t}\alpha_i^2\|\bx^i-\bar{x}^i\one^T\|_F^2 \sum_{k=i}^{t}\rho^{k-i}\alpha_{k+1} + c_3 \sum_{i=1}^{t}\alpha_i^2(f(\bar{x}^i) -\bar f) \sum_{k=i}^{t}\rho^{k-i}\alpha_{k+1}\\
		&\leq \frac{c_1\alpha_1}{1-\rho^2} + \frac{c_2}{1-\rho}\sum_{k=1}^{t+1}\alpha_k^3\|\bx^k-\bar{x}^k\one^T\|_F^2 + \frac{c_3}{1-\rho}\sum_{k=1}^{t+1}\alpha_k^3(f(\bar{x}^i) -\bar f),
	\end{aligned}
\end{equation}

where both the third and fourth inequalities follow from the nonincreasing property of $\{\alpha_k\}_k$ and the inequality $\sum_{i=0}^{k}\rho^i<(1-\rho)^{-1}$ for all $k\in\N$. Notice that $\alpha_k\to0$ as $k$ tends to infinity, there exists $k_0\in \N$ such that $\frac{c_2\alpha_k^2}{1-\rho}\leq \frac12$ for all $k\geq k_0$. Hence, by rearranging the terms in \eqref{eq:sum-1}, we obtain for all $t\geq k_0$
\begin{equation}
	\label{eq:dgd-est-2}
	\sum_{k=1}^{t+1} \alpha_k\|\bx^k-\bar{x}^k\one^T\|_F^2 \leq \frac{2c_3}{1-\rho}\sum_{k=1}^{t+1}\alpha_k^3(f(\bar{x}^i) -\bar f) + \underbracket[.75pt][4pt]{\frac{2c_1\alpha_1}{1-\rho^2} + \sum_{k=1}^{k_0}\Big(\frac{2c_2\alpha_k^2}{1-\rho}-1\Big)\alpha_k\|\bx^k-\bar{x}^k\one^T\|_F^2}_{=:c_4}.
\end{equation}
Equipped with \eqref{eq:dgd-est-2}, we are ready to show that $\{f(\bar x^k)\}_k$ has a uniform upper bound for all $k$. By \cref{lem:dgd-descent}, we have\[f(\bar x^{k+1}) - \bar f \leq f(\bar x^{k})  - \bar f + \frac{{\sL^2}\alpha_k}{2n} \|\bx^{k} - \bar x^{k} \one^T\|_F^2.\]
Unfolding the above recursion by summing over $k=1,2,\cdots, t$ and applying \cref{eq:dgd-est-2},
\[f(\bar x^{t+1}) - \bar f \leq f(\bar x^{1})  - \bar f + \frac{{\sL^2}}{2n}\sum_{k=1}^{t} \alpha_k\|\bx^k-\bar{x}^k\one^T\|_F^2\leq f(\bar x^{1})  - \bar f + \frac{c_4{\sL^2}}{2n} +  \frac{c_3{\sL^2}}{n(1-\rho)}\sum_{k=1}^{t}\alpha_k^3(f(\bar{x}^i) -\bar f).\]
Define the sequences $\{\phi_k\}_k$ and $\{\gamma_k\}_k$ as:
\[\phi_1:= f(\bar x^{1})  - \bar f +  \frac{c_4{\sL^2}}{2n},\quad \phi_k:=f(\bar x^{k})  - \bar f,\quad \gamma_1:=1+\frac{c_3{\sL^2}\alpha_1^3}{n(1-\rho)},\quad \gamma_k:=\frac{c_3{\sL^2}\alpha_k^3}{n(1-\rho)},\quad \forall k\geq 2.\]
Hence, the recursion is rewritten as
$\phi_{t+1} \leq \sum_{k=1}^{t} \gamma_k \phi_k$, where $\sum_{k=1}^{\infty}\gamma_k<\infty.$
Let us notice
\begin{align*}
	\sum_{t=1}^{\infty} \gamma_{t+1}\phi_{t+1} \leq \sum_{t=1}^{\infty}\gamma_{t+1} \sum_{k=1}^{t}\gamma_k \phi_k = \sum_{k=1}^{\infty} \gamma_k \phi_k \sum_{t=k}^{\infty}\gamma_{t+1}.
\end{align*}
There exists $k_1\in\N$ such that $\sum_{t=k}^{\infty}\gamma_{t+1}\leq \frac12$ for all $k > k_1$ because the sequence $\{\gamma_k\}_k$ is summable. Consequently,
\[\sum_{k=1}^{\infty}\gamma_k \phi_k \leq \gamma_1\phi_1 + c_5 \sum_{k=1}^{k_1}\gamma_k \phi_k + \frac12 \sum_{k=k_1+1}^{\infty}\gamma_k \phi_k \quad \Longleftrightarrow\quad \sum_{k=1}^{\infty}\gamma_k \phi_k \leq 2\gamma_1\phi_1 +(2c_5-1) \sum_{k=1}^{k_1}\gamma_k \phi_k<\infty,\]
where $c_5:=\sum_{k=1}^{\infty}\gamma_k$. Recalling $\phi_{t+1} \leq \sum_{k=0}^{t} \gamma_k \phi_k<\infty$ for all $t\in\N$, we conclude that the sequence $\{f(\bar{x}^k)\}_k$ is bounded from above. Let $\bar{\phi}$ denote the upper bound of the sequence $\{f(\bar{x}^k) - \bar f\}_k$, we have for all $k \geq 1$,
\begin{align*}
		&\quad \|\bx^{k+1} - \bar x^{k+1} \one^T\|_F^2 \leq c_1 \rho^{2k} + c_2 \sum_{i=1}^{k} \rho^{k-i} \alpha_i^2\|\bx^i-\bar{x}^i\one^T\|_F^2 + c_3\bar{\phi}\sum_{i=1}^{k} \rho^{k-i} \alpha_i^2\\
		&\leq c_1 \rho^{2k} + \frac{c_3\bar{\phi}\alpha_1^2}{1-\rho} + \frac{\alpha_1 c_2}{1-\rho} \sum_{i=1}^{k} \alpha_i\|\bx^i-\bar{x}^i\one^T\|_F^2 \leq c_1 \rho^{2k} + \frac{c_3\bar{\phi}\alpha_1^2}{1-\rho} + \frac{\alpha_1 c_2 c_4}{1-\rho} + \frac{2n\alpha_1\bar{\phi}c_2c_5}{(1-\rho){\sL^2}} =:\hat \sG,
\end{align*}
where the first line is due to the estimate \eqref{eq:dgd-est-1} and the last line follows from \eqref{eq:dgd-est-2}. Denote $\bar \sG :=\max\{\hat \sG,\bar \phi\}$. Based on the estimate \eqref{eq:dgd-est-1} and \cref{lem:sublinear}, it holds for all $k\geq 1$ that
\begin{align*}
     \|\bx^{k+1} - \bar x^{k+1} \one^T\|_F^2 &\leq c_1 \rho^{2k} + c_2 \sum_{i=1}^{k} \rho^{k-i} \alpha_i^2\|\bx^i-\bar{x}^i\one^T\|_F^2 + c_3\bar{\phi}\sum_{i=1}^{k} \rho^{k-i} \alpha_i^2\\
     &\leq  c_1 \rho^{2k} + (c_2+c_3){\bar{\sG}}\sum_{i=1}^{k} \rho^{k-i} \alpha_i^2 \leq {\sG} \alpha_{k+1}^2,\quad \text{for some $\sG>0$}.
\end{align*}
The existence of such constant $\sG$ is ensured by \cref{lem:sublinear} and the fact that $\{\rho^k\}_k$ converges linearly to zero. 
\end{proof}

\subsection{Proof of \cref{prop:dgd}}\label{proof:dgd-descent}
\begin{proof}
Based on \cref{lem:dgd-descent} and consensus error bound in \cref{lemma:dgd}, there exists a constant $\sG>0$ such that   
	\[ f(\bar x^{k+1})  \leq f(\bar x^k) - \frac{\alpha_k}{2}\|\nabla f(\bar x^k)\|^2  + \frac{{\sG\sL}}{2n}\alpha_k^3,\quad \text{for all}\;k\geq 1.\]
Hence, (a) is verified. 
 It follows from the update steps of \cref{algo:dgd} and \cref{as:matrix} (a) that
	\[
	\bar x^k - \bar x^{k+1} = \frac{ \bx^k W\one - \bx^{k+1} \one}{n} =\frac{\alpha_k\nabla F(\bx^k)\one}{n}.
	\]
	Using the Lipschitz continuity of $\nabla f_i$ and \cref{lemma:dgd}, we obtain
\begin{align*}
	\|\bar x^k - \bar x^{k+1}\| &= \alpha_k\left\| \frac{\nabla F(\bx^k)\one}{n}\right\| \leq \alpha_k\|\nabla f(\bar x^k)\| + \alpha_k \left\|\frac{\nabla F(\bx^k)\one}{n} - \nabla f(\bar x^k) \right\|\\
	& = \alpha_k\|\nabla f(\bar x^k)\| + \frac{\alpha_k}{n}\left\|{\sum}_{i=1}^{n} \nabla f_i(\bar x^k) - \nabla f_i(x^k_i) \right\| \leq \alpha_k\|\nabla f(\bar x^k)\| + \frac{\sL\alpha_k}{n}\sum_{i=1}^{n}\| \bar x^k - x^k_i \| \\
	&\leq \alpha_k\|\nabla f(\bar x^k)\| + \frac{\sL\alpha_k}{n}\sqrt{n\,{\sum}_{i=1}^{n}\| \bar x^k - x^k_i \|^2} \leq  \alpha_k\|\nabla f(\bar x^k)\| + \frac{{\sL}\sqrt{\sG}}{\sqrt{n}}\alpha_k^2,\quad \forall\, k\geq 1,
\end{align*}
where the last inequality is due to ${\sum}_{i=1}^{n}\| \bar x^k - x^k_i \|^2= \|\bar x^k \one^T - \bx^k\|^2_F$. This completes the statement (b).
\end{proof}

\subsection{Proof of Supporting Lemmas}
\subsubsection*{Proof of \cref{lem:sublinear}}\label{proof:lem:sublinear}
\begin{proof}
Since the sequence $\{a_k\}_k$ converges to zero sublinearly, for the fixed constant $\eta:=\frac{\rho+1}{2}\in(\rho,1)$, there exists $k_1 \geq 1$ such that $\frac{a_{k+1}}{a_k} \geq \eta$ for all $k\geq k_1$. We have for all $t> k_1$,
	\begin{align*}
		\sum_{i=1}^{t} a_i \rho^{t-i} = \sum_{i=1}^{k_1} a_i \rho^{t-i} + \sum_{i=k_1+1}^{t}a_i \rho^{t-i} \leq \rho^t (\sum_{i=1}^{k_1} a_i \rho^{-i}) +  \frac{a_{t+1}}{\eta} \sum_{i=k_1+1}^{t}(\rho/\eta)^{t-i} \leq \hat{c} \rho^t + \frac{a_{t+1}}{\eta - \rho}, 
	\end{align*}
	where $\hat{c}:=\sum_{i=1}^{k_1} a_i \rho^{-i}$ that is finite and fixed. Our next step is to show $\rho^t=\cO(a_{t+1})$. Using the relation  \[a_{t+1} \geq \eta^{t+1-k_1} a_{k_1} = \frac{a_{k_1}}{\eta^{k_1-1}} ({\eta}/{\rho})^t \rho^t,\quad\text{for all }t \geq k_1\] and noting that $(\eta/\rho)^t\to\infty$ as $t\to\infty$, we conclude that there exists $k_2 \geq k_1$ such that for all $t\geq k_2$,
\[	({\eta}/{\rho})^t \geq  \frac{\hat{c}(\eta-\rho)\eta^{k_1-1}}{a_{k_1} }\quad \Longrightarrow \quad
	a_{t+1} \geq \hat{c}(\eta-\rho)\rho^t \quad  \Longrightarrow \quad \sum_{i=1}^{t} a_i \rho^{t-i} \leq \frac{2 a_{t+1}}{\eta-\rho} = \frac{4\alpha_{t+1}}{1-\rho}.
\]
Finally, setting $\tilde c:=\max\{\max_{1 \leq k < k_2} \frac{\sum_{i=1}^{k} a_i \rho^{k-i} }{a_{k+1}} ,\frac{4}{1-\rho}\}$ completes the proof.
\end{proof}

\subsubsection*{Proof of \cref{lem:dgd-descent}} \label{proof:lem:dgd-descent}
\begin{proof}
		Firstly, we show condition \ref{C1} holds. Based on the update of \cref{algo:dgd}, 
	\begin{equation}
		\label{eq:prop-dgd-est-1}
		f(\bar x^{k+1}) = f\left(\frac{\bx^{k+1} \one}{n}\right) = f\left(\frac{(\bx^k W - \alpha_k \nabla F(\bx^k)) \one}{n}\right) = f\left(\frac{\bx^kW\one}{n}- \alpha_k\cdot  \frac{\nabla F(\bx^k)\one}{n} \right).
	\end{equation}
	It follows from \cref{as:matrix} (a) that
	\begin{equation*}
		\bx^k W \one = \sum_{i=1}^{n} \sum_{j=1}^{n}w_{ji} x_j = \sum_{j=1}^{n}x_j\sum_{i=1}^{n}w_{ji} = \bx^k\one.
	\end{equation*}
	Combining this relation with \cref{eq:prop-dgd-est-1} and utilizing the well-known descent lemma \cite[Theorem 2.1.5]{nesterov2018lectures}, we obtain
	\begin{equation}
		\label{eq:prop-dgd-est-3}	
		\begin{aligned}
			f(\bar x^{k+1}) & = f\left(\bar x^k - \alpha_k \cdot \frac{\nabla F(\bx^k)\one}{n}\right) \leq f(\bar x^k) - \alpha_k \cdot \iprod{\nabla f(\bar x^k)}{\frac{\nabla F(\bx^k)\one}{n}} + \frac{{\sf L}\alpha_k^2}{2} \left\|\frac{\nabla F(\bx^k)\one}{n} \right\|^2 \\
			& = f(\bar x^k) - \frac{\alpha_k}{2}\|\nabla f(\bar x^k)\|^2 -  \frac{\alpha_k(1-{\sf L}\alpha_k)}{2}\left\|\frac{\nabla F(\bx^k)\one}{n} \right\|^2 + \frac{\alpha_k}{2}\left\| \nabla f(\bar x^k) - \frac{\nabla F(\bx^k)\one}{n} \right\|^2,
		\end{aligned}
	\end{equation}
	where the last equality is due to $-2\iprod{a}{b}=\|a-b\|^2 -\|a\|^2 - \|b\|^2.$ The ${\sL}$-smoothness of $f_i$ gives
	\begin{equation*}
		\left\| \nabla f(\bar x^k) - \frac{\nabla F(\bx^k)\one}{n} \right\|^2=\frac{1}{n^2}\left\|\sum_{i=1}^{n} \nabla f_i(\bar x^k) - \nabla f_i(x^k_i) \right\|^2 \leq \frac{{\sf L^2}}{n}\sum_{i=1}^{n}\|\bar x^k - x_i^k\|^2=\frac{{\sf L^2}}{n} \|\bar x^k \one^T - \bx^k\|^2_F.
	\end{equation*}
The proof is completed by substituting the above estimate into \eqref{eq:prop-dgd-est-3}.
\end{proof}

\section{Appendix: Federated Averaging}
\subsection{Proof of \cref{lemma:FedAvg}}\label{proof:FedAvg}
\begin{proof}
	First, by $\sL$-smoothness of $f$, we have
	\begin{equation}
		\label{eq:fed-1}
		\begin{aligned}
			f(x^{k+1}) &\leq f(x^k) - \iprod{\nabla f(x^k)}{x^k-x^{k+1}} + \frac{{\sL}}{2}\|x^{k+1}-x^k\|^2\\
                & = f(x^k) - \frac{1}{Emn^2}\iprod{Emn\sqrt{\alpha_k}\nabla f(x^k)}{n(x^k-x^{k+1})/\sqrt{\alpha_k}} + \frac{{\sL}}{2}\|x^{k+1}-x^k\|^2\\
			& = f(x^k) - \frac{Em\alpha_k}{2}\|\nabla f(x^k)\|^2 - \frac{1}{2}\left(\frac{1}{Em\alpha_k}-{\sL}\right)\|x^k-x^{k+1}\|^2 \\& \hspace{3ex}+ \frac{\alpha_k}{2Emn^2}\left\| \frac{n(x^k-x^{k+1})}{\alpha_k} - Emn\cdot \nabla f(x^k)\right\|^2,
 		\end{aligned}
	\end{equation} 
where the last line uses $2\iprod{a}{b}=\|a\|^2+\|b\|^2-\|a-b\|^2$ with $a=Emn\sqrt{\alpha_k}\nabla f(x^k)$ and $b=n(x^k-x^{k+1})/\sqrt{\alpha_k}$. To obtain the desired result, we primarily focus on the upper bound of $\|n(x^k-x^{k+1})/\alpha_k - Emn\nabla f(x^k)\|^2$. According to the update of the central server, it holds that
	\begin{equation}
		\label{eq:fed-2}
		n(x^k - x^{k+1}) = \sum_{t=1}^{n} (x^k-x_t^{k+1}).
	\end{equation}
	We examine the computation procedures in each client $t$ and observe that 
	\begin{equation}
		\label{eq:fed-3}
		x^k - x_t^{k+1} = \alpha_k \sum_{i=1}^{E} \sum_{j=1}^{m} \nabla h^t_{\pi^{i,j}_{t}}(y_{t,k}^{i,j}) ,\quad \forall \; 1 \leq t \leq n.
	\end{equation}
Combining \cref{eq:fed-2} and \cref{eq:fed-3} leads to
	\begin{equation}
		\label{eq:fed-4}
		\frac{n(x^k - x^{k+1})}{\alpha_k} = \sum_{t=1}^{n} \sum_{i=1}^{E}   \sum_{j=1}^{m} \nabla h^t_{\pi^{i,j}_{t}}(y_{t,k}^{i,j}).
	\end{equation}
Notice that
$
\nabla f(x) = \frac{1}{n}\sum_{t=1}^{n} \nabla f_t(x) = \frac{1}{E n m} \sum_{t=1}^{n}  \sum_{i=1}^E   \sum_{j=1}^{m}\nabla h^t_{\pi^{i,j}_{t}}(x)
$, then by the triangle inequality, 
\begin{equation}\label{eq:fed-initial-bound}
\begin{aligned}
	 &\hspace{4mm}\left\| \frac{n(x^k-x^{k+1})}{\alpha_k} - Emn \cdot \nabla f(x^k)\right\|  \leq \sum_{t=1}^{n} \sum_{i=1}^{E}   \sum_{j=1}^{m} \|\nabla h^t_{\pi^{i,j}_{t}}(y_{t,k}^{i,j}) - \nabla h^t_{\pi^{i,j}_{t}}(x^k)\|\\
	&\leq \sL \sum_{t=1}^{n} \sum_{i=1}^{E}  \sum_{j=1}^{m} \|y_{t,k}^{i,j} - x^k \|\leq E{\sL}m \sum_{t=1}^{n} \max_{i\in[E],j\in[m]} \|y_{t,k}^{i,j} - x^k \|   =: V_k, 
\end{aligned}
\end{equation}
where the last line is due to Lipschitz continuity of $\nabla h^t_j(\cdot)$. Based on the update of $\Fed$, we unfold $\|y_{t,k}^{i,j} - x^k\|$ and invoke the triangle inequality again, then the following holds for all client $t\in[n]$,
\[
\|y_{t,k}^{i,j} - x^k\| \leq \alpha_k \sum_{\ell=1}^{E} \sum_{r=1}^{m}\|\nabla h^t_{\pi_t^{\ell,r}}(y_{t,k}^{\ell,r})\|,\quad \forall\,  i\in[E]\; \text{and}\; \forall\, j\in[m].
\]
Based on the definition of $V_k$ in \eqref{eq:fed-initial-bound} and the above estimate, we can establish
\begin{equation}\label{eq:fed-bound-v-1}
	\begin{aligned}
	V_k & \leq E\sL m\alpha_k \sum_{t=1}^{n} \sum_{\ell=1}^{E} \sum_{r=1}^{m}\|\nabla h^t_{\pi_t^{\ell,r}}(y_{t,k}^{\ell,r})\| \\
	& \leq E\sL m\alpha_k \sum_{t=1}^{n} \sum_{\ell=1}^{E} \sum_{r=1}^{m} \big(\|\nabla h^t_{\pi_t^{\ell,r}}(y_{t,k}^{\ell,r})-\nabla h^t_{\pi_t^{\ell,r}}(x^k)\| + \|\nabla h^t_{\pi_t^{\ell,r}}(x^k)\|\big) \\
	& \leq E\sL m\alpha_k V_k + E\sL m\alpha_k \sum_{t=1}^{n} \sum_{\ell=1}^{E} \sum_{r=1}^{m}  \|\nabla h^t_{\pi_t^{\ell,r}}(x^k)\| = E\sL m\alpha_k V_k + E^2\sL m\alpha_k \sum_{t=1}^{n} \sum_{r=1}^{m}  \|\nabla h^t_{r}(x^k)\|,
\end{aligned}
\end{equation}
where the second line is due to triangle inequality and the last equation holds because $\{\pi_t^{\ell,1},\ldots,\pi_t^{\ell,m}\}$ is a permutation of $[m]$, indicating $\sum_{r=1}^{m}\|\nabla h^t_{\pi_t^{\ell,r}}(x^k)\|$ = $\sum_{r=1}^{m}\|\nabla h^t_{r}(x^k)\|$.
Note that $h^t_{r}(\cdot)$ is $\sL$-Lipschitz smooth and lower bounded by $\bar f$ for all $t\in[n]$ and $r\in[m]$, utilizing \eqref{eq:L-smooth-Fed},  we have 
\[
\|\nabla h^t_{r}(x^k)\|^2 \leq  2\sL(h^t_{r}(x^k) - \bar f) \leq 2mn\sL(f(x^k) -\bar f)\quad \Longrightarrow \quad   \|\nabla h^t_{r}(x^k)\| \leq \sqrt{2mn\sL(f(x^k) -\bar f)}.
\]
Since $\alpha_k \leq \frac{1}{2mE\sL}$, we rearrange \eqref{eq:fed-bound-v-1} and apply the bound shown above, then the quantity $V_k$ is upper bounded by
\begin{equation}\label{eq:fed-important-0}
	V_k \leq 2E^2\sL m\alpha_k \sum_{t=1}^{n} \sum_{r=1}^{m}  \|\nabla h^t_{r}(x^k)\| \leq 2E^2\sL m^2 n \alpha_k \sqrt{2mn\sL(f(x^k) -\bar f)}. 
\end{equation}
Using \eqref{eq:fed-initial-bound} and inserting this upper bound of $V_k$ into \eqref{eq:fed-initial-bound}, we obtain
\begin{equation}
	\label{eq:fed-grad-err}
	\left\| \frac{n(x^k-x^{k+1})}{\alpha_k} - Emn \cdot \nabla f(x^k)\right\|^2 \leq V_k^2 \leq 8E^4\sL^3 m^5n^3 (f(x^k) - \bar f)\alpha_k^2.
\end{equation} 
It then follows from estimates \eqref{eq:fed-1} and \eqref{eq:fed-grad-err} and the condition $\alpha_k \leq \frac{1}{2mE\sL}$ that
\begin{equation}
	\label{eq:fed-important}
	\begin{aligned}
f(x^{k+1}) - \bar f  &\leq  (f(x^k) - \bar f) - \frac{Em\alpha_k}{2}\|\nabla f(x^k)\|^2 + \frac{\alpha_k}{2Emn^2}\left\| \frac{n(x^k-x^{k+1})}{\alpha_k} - Emn\cdot \nabla f(x^k)\right\|^2\\ &\leq (1+ \underbracket[.75pt][4pt]{4E^3\sL^3 m^4 n}_{=:\sH} \alpha_k^3)(f(x^k)-\bar f) - \frac{Em\alpha_k}{2}\|\nabla f(x^k)\|^2.
\end{aligned}
\end{equation}
Since $\sum_{k=1}^\infty \alpha_k^3<\infty$ and it holds for all $k\geq 1$ that
\[f(x^{k+1}) \leq (1+\sH\alpha_k^3)(f(x^k)-\bar f) \leq (f(x^1)-\bar f) \cdot {\prod}_{i=1}^k (1+\sH\alpha_i^3) \leq  (f(x^1)-\bar f) \cdot \exp\Big(\sH{\sum}_{i=1}^k \alpha_i^3\Big) ,\] we conclude that the sequence $f(x^k)$ is bounded above for all $k\geq 1$. Thus, there exists a constant $\sG>0$ such that $\sH(f(x^k) - \bar f) \leq \sG$ for all $k\geq1$. It follows from \eqref{eq:fed-important-0} and \eqref{eq:fed-important} that 
\begin{align*}
&	f(x^{k+1})   \leq f(x^k) - \frac{Em\alpha_k}{2}\|\nabla f(x^k)\|^2 +   \sG  \alpha_k^3,\quad \text{and}\\
&	\|x^k-x^{k+1}\| \leq Em  \alpha_k \|\nabla f(x^k)\| + \frac{\alpha_k}{n}\left\| \frac{n(x^k-x^{k+1})}{\alpha_k} - Emn\nabla f(x^k)\right\|   \\
&\hspace{12ex}\leq  Em \alpha_k \|\nabla f(x^k)\| + \sqrt{2mE\sG} \cdot \alpha_k^2,
\end{align*}
as desired.
\end{proof}
\end{document}